\DeclareMathAlphabet{\mathpzc}{OT1}{pzc}{m}{it}
\newtheorem{theorem}{Theorem}
\newtheorem{lemma}[theorem]{Lemma}
\numberwithin{theorem}{section}
\numberwithin{equation}{section}
\title{The nonconforming Crouzeix-Raviart element approximation and two-grid discretizations for the elastic eigenvalue problem}
\author{Hai Bi\footnote{ School of Mathematical Science, Guizhou Normal University, Guiyang 550025, China (bihaimath@gznu.edu.cn) }, Xuqing Zhang \footnote{School of Mathematical Science, Guizhou Normal University, Guiyang 550025, China (zhxuqing1230@126.com)}, Yidu Yang
\footnote{School of Mathematical Science, Guizhou Normal University, Guiyang 550025, China (ydyang@gznu.edu.cn)}}
\date{}
\begin{document}
\maketitle
\begin{abstract}
  In this paper, we extend the work of Brenner and Sung [Math. Comp. 59, 321--338 (1992)] and present a regularity estimate for the elastic equations in concave domains. Based on the regularity estimate we prove that the constants in the error estimates of the nonconforming Crouzeix-Raviart element approximations for the elastic equations/eigenvalue problem are independent of Lam$\acute{e}$ constant, which means the nonconforming Crouzeix-Raviart element approximations are locking-free. We also establish two kinds of two-grid discretization schemes for the elastic eigenvalue problem, and analyze that when the mesh sizes of coarse grid and fine grid satisfy some relationship, the resulting solutions can achieve the optimal accuracy. Numerical examples are provided to show the efficiency of two-grid schemes for the elastic eigenvalue problem.
\end{abstract}


\section{Introduction}

Due to the wide application background, the approximate computation for elastic equations/eigenvalue problems
 has attracted the attention of academic circles, for instance, \cite{falk1991,Babuska1992,brenner1992,Kouhia,Zhang,Ovtchinnikov,Ming,Wang2002,oden,LLS,Wang2004,Walsh,Hernandez,Meddahi,Russo,Lee,gong,brenner}, etc. It is known that for numerical solutions of the equations of linear isotropic planar elasticity, standard conforming
finite elements suffer a deterioration in performance as the Lam$\acute{e}$ constant $\lambda\rightarrow \infty$,
that is locking phenomenon (see \cite{Babuska1992,BS1992}). To overcome  the locking phenomenon, several numerical approaches have been developed. For example, the $p$-version method \cite{Vogelius}, the PEERS method \cite{ABD}, the mixed method \cite{Stenberg}, the Galerkin least squares method  \cite{FS}, the nonconforming triangular elements \cite{falk1991,brenner1992} and quadrilateral elements \cite{Zhang,LLS,Wang2004,Ming}, and so on.

For the computation of eigenvalue problems in elasticity, there have been quite a few studies. For instance, \cite{Ovtchinnikov} adopts a preconditioning technique associated with dimensional reduction algorithm for the thin elastic structures.
\cite{oden} presents a method for three-dimensional linear elasticity or shell problems to
derive computable estimates of the approximation error in eigenvalues.
\cite{Walsh} develops an a posteriori error estimator for linearized elasticity eigenvalue problems.
\cite{Hernandez} analyzes the finite element approximation of the spectral problem for the
linear elasticity equation with mixed boundary conditions in a curved concave domain.
\cite{Meddahi} conducts an analysis for the eigenvalue problem of linear elasticity by means of a mixed variational formulation.
\cite{Russo} presents a theory for the approximation of eigenvalue problems in mixed form by nonconforming methods and apply
it to the classical Hellinger-Reissner mixed formulation for a linear elastic structure, etc.
Recently, \cite{Lee} uses the immersed finite element method based on Crouzeix-Raviart (C-R)
P1-nonconforming element to approximate eigenvalue problems for elasticity equations with interfaces.
\cite{gong} explores a shifted-inverse adaptive multigrid method for the elastic eigenvalue problem.

In the above literatures, \cite{falk1991,brenner1992,Lee,brenner} study the nonconforming C-R
element method for the elastic equations/eigenvalue problems in convex domains, and as far as we know, there is no report on the nonconforming C-R approximation for the elastic eigenvalue problems in concave domain.
In this paper, we extend the work in \cite{brenner1992,brenner} and present a regularity estimate for the elastic equations in concave domain (see (\ref{s2.8})). Since in the standard error analysis for the consistency term, it is required that the ``minimum" regularity $\mathbf{u}\in \mathbf{H}^{1+s}(\Omega)$ for $s\geq\frac{1}{2}$ which is not necessarily satisfied in concave domain, \cite{gudi2010,mao2010} adopt a new method to conduct the error estimate for the C-R element approximation. To be more specific, they made use of the conforming interpolation of the nonconforming C-R element approximation. However, at present we cannot use their method to warrant the error estimates are locking-free for the elastic eigenvalue problem. So, we adopt the argument in \cite{bernardi,caiz} to prove a trace inequality in which the constant is analyzed elaborately (see Lemma 3.3) with the condition slightly different from that in the existing literatures and then derive the estimates of consistency term. Based on the regularity estimate we prove that the constants in the error estimates of the nonconforming C-R element approximations for the elastic equations/eigenvalue problem are independent of the Lam$\acute{e}$ constant, which means the C-R element approximations are locking-free.

Since introduced by Xu and Zhou \cite{xu1992,xu1996}, due to the good performance in reducing computational costs and improving accuracy, the two-grid discretization method has been developed and successfully applied to other problems, for instance, Poisson equation/integral equation eigenvalue problems \cite{xu2001,yang2011}, semilinear eigenvalue problem \cite{cj}, Stokes equations \cite{li,cai,mu}, Schr$\ddot{o}$dinger equation \cite{clz,gsz}, quantum eigenvalue problem \cite{dai}, Steklov eigenvalue problem \cite{xie2014,by} and so on.
In this paper, we establish two kinds of two-grid discretization schemes of nonconforming C-R element. We prove that the constants in error estimates are independent of the Lam$\acute{e}$ constants, i.e., the two-grid discretization schemes of nonconforming C-R element are also locking-free, and when the mesh sizes of coarse grid and fine grid satisfy some relationship, the resulting solutions can achieve the optimal accuracy. We present some numerical examples to show the two-grid discretization schemes are efficient for solving elastic eigenvalue problem.

 The rest of the paper is organized as follows. Some preliminaries are given in Section 2. The nonconforming C-R element approximation for the elastic eigenvalue problem is established in Section 3. Two-grid discretization schemes and the corresponding error analysis are presented in Section 4. Finally, numerical experiments are shown in Section 5.

 We refer to \cite{Babuska,boffi,brenner,ciarlet} as regards the basic theory of finite element methods in this paper.

Throughout this paper, we use the letter $C$, with or without subscripts, to denote a generic positive constant independent of the Lam$\acute{e}$ constants $\mu$, $\lambda$ and the mesh size $h$, 
which may take different values in different contexts.

\section{Preliminaries}\label{Preliminary}

Let $\mathbf{x}=(x,y)^T\in \mathbb{R}^2, \Omega\subset \mathbb{R}^2$ be a bounded Lipschitz polygon but not necessarily convex. The standard notation $W^{s,p}(\Omega)$ is used to denote Sobolev spaces, and $H^s(\Omega)$ and their associated norms $\|\cdot\|_{s,\Omega}$ and seminorms $|\cdot|_{s,\Omega}$ are used in the case of $p=2$. Denote $H^1_0(\Omega)=\{v\in H^1(\Omega):v|_{\partial\Omega}=0\}$ where $v|_{\partial\Omega}=0$ is in the sense of trace. The space $H^{-s}(\Omega)$, the dual of $H^s(\Omega)$, will also be used.
In this paper, the bold letter is used for vector-valued functions and their associated spaces, and the following conventions are adopted for the Sobolev norms and seminorms: for any $\mathbf{v}=(v_1(\mathbf{x}), v_2(\mathbf{x}))^T\in \mathbf{H}^s(\Omega)$,
\begin{equation*}
\begin{aligned}
&\|\mathbf{v}\|_{\mathbf{H}^s(\Omega)}:=(\|v_1\|_{s,\Omega}^2+\|v_2\|_{s,\Omega}^2)^{\frac{1}{2}},\\
&|\mathbf{v}|_{\mathbf{H}^s(\Omega)}:=(|v_1|_{s,\Omega}^2+|v_2|_{s,\Omega}^2)^{\frac{1}{2}}.
\end{aligned}
\end{equation*}
Bold letter with an undertilde is used for matrix-valued functions and spaces. For matrix-valued function $A=(a_{ij})_{1\leq i,j\leq 2}$,
\begin{equation*}
\|A\|_{\underset{\sim}{\mathbf{H}}^s(\Omega)}:=(\sum_{i,j=1}^{2}\|a_{ij}\|_{s,\Omega}^2)^{\frac{1}{2}}.
\end{equation*}
\indent The elastic eigenvalue problem is to find $\omega\in \mathbb{R}$ and $\mathbf{u}\neq 0$ such that
\begin{equation}\label{s2.1}
\left\{
\begin{array}{ll}
-\nabla \cdot \sigma(\mathbf{u})=\omega\rho \mathbf{u}~~in~ \Omega,\\
\mathbf{u}=0~~~on~ \partial \Omega.
\end{array}
\right.
\end{equation}
Here $\mathbf{u}(\mathbf{x})=(u_1(\mathbf{x}), u_2(\mathbf{x}))^T$ is the displacement vector, $\rho(\mathbf{x})$ is the mass density, and $\sigma(\mathbf{u})$ is the stress tensor given by the generalized Hooke law
\begin{equation*}
\sigma(\mathbf{u})=2\mu\varepsilon(\mathbf{u})+\lambda tr(\varepsilon(\mathbf{u}))I,
\end{equation*}
where $I\in \mathbb{R}^{2\times 2}$ is the identity matrix, and the positive constants $\mu, \lambda$ denote the Lam$\acute{e}$ parameters satisfying $(\mu, \lambda)\in [\mu_0, \mu_1]\times (0, +\infty)$ where $0<\mu_0<\mu_1<\infty$.
The strain tensor $\varepsilon(\mathbf{u})$ is defined as
\begin{equation*}
\varepsilon(\mathbf{u})=\frac{1}{2}(\nabla \mathbf{u}+(\nabla \mathbf{u})^T),
\end{equation*}
where $\nabla \mathbf{u}$ is the displacement gradient tensor
\begin{equation*}
\nabla \mathbf{u}=\left [
  \begin{array}{cc}
    \partial_{x}u_{1} & \partial_{y}u_{1} \\
    \partial_{x}u_{2} & \partial_{y}u_{2} \\
  \end{array}
\right ].
\end{equation*}

 The weak form for \eqref{s2.1} is stated as to find $(\omega, \mathbf{u})\in \mathbb{R}\times \mathbf{H}^1_0(\Omega)$,  $\|\mathbf{u}\|_{\mathbf{H}^1(\Omega)}=1$,
such that
\begin{equation}\label{s2.2}
a(\mathbf{u},\mathbf{v})=\omega b(\mathbf{u},\mathbf{v}),~~~\forall \mathbf{v}\in \mathbf{H}_{0}^{1}(\Omega),
\end{equation}
where
\begin{align}
a(\mathbf{u},\mathbf{v})&=\int_{\Omega }\sigma(\mathbf{u}): \nabla \mathbf{v}d\mathbf{x} \nonumber\\
& =\int _{\Omega }(\mu\mathbf{\nabla}\mathbf{u}:\mathbf{\nabla}\mathbf{v}+(\mu+\lambda)(div\mathbf{u})(div\mathbf{v}))d\mathbf{x}\label{s2.3}\\
& =\int _{\Omega}(2\mu\varepsilon(\mathbf{u}):\varepsilon(\mathbf{v})+\lambda div \mathbf{u}~div\mathbf{v})d\mathbf{x},\nonumber\\
b(\mathbf{u},\mathbf{v})&=\int _{\Omega}\rho\mathbf{u}\cdot\mathbf{v} d\mathbf{x}=\int _{\Omega}\rho\sum_{i=1}^{2}u_{i}v_{i}d\mathbf{x}.\nonumber
\end{align}
Here $A:B=tr(AB^T)$ is the Frobenius inner product of matrices $A$ and $B$.
It can be verified that the above bilinear form $a(\cdot,\cdot)$ and the linear form $b(\cdot,\cdot)$ are continuous over the
space $\mathbf{H}_{0}^{1}(\Omega)$ and $\mathbf{L}^{2}(\Omega)$, respectively, and from Korn's inequality it can be proved that $a(\cdot,\cdot)$ is $\mathbf{H}_{0}^{1}$-elliptic. Thus, $a(\cdot,\cdot)$ and $\|\cdot\|_a=\sqrt{a(\cdot,\cdot)}$ can be used as an inner product and norm on $\mathbf{H}_{0}^{1}(\Omega)$. Without loss of generality, we assume that $\rho\equiv 1$ in the rest of the paper.

The source problem associated with  \eqref{s2.2} is:
Find $\mathbf{w}\in \mathbf{H}^1_0(\Omega)$ such that
\begin{equation}\label{s2.4}
a(\mathbf{w},\mathbf{v})= b(\mathbf{f},\mathbf{v}),~~~\forall \mathbf{v}\in \mathbf{H}_{0}^{1}(\Omega).
\end{equation}
\indent In \cite{brenner1992,brenner} Brenner et al. study and prove the following the a priori estimates for \eqref{s2.4} when $\Omega$ is convex:
\begin{equation}\label{s2.5}
\|\mathbf{w}\|_{\mathbf{H}^{2}(\Omega)}+\lambda \|div\mathbf{w}\|_{1,\Omega}\leq C_{\Omega}\|\mathbf{f}\|_{\mathbf{L}^{2}(\Omega)}.
\end{equation}
Next, using the argument in \cite{brenner1992} we shall discuss the a priori estimates for \eqref{s2.4} when $\Omega$ is concave. In this case,
it needs more delicate analysis since the solution of  \eqref{s2.4} is not smooth enough.
\begin{lemma}\label{lem2.1}
For any given $\mathbf{w}\in\mathbf{H}^{1+t}(\Omega)\cap\mathbf{H}_{0}^{1}(\Omega)~(0\leq t \leq 1)$,
there exists $\mathbf{w}^{*}\in\mathbf{H}^{1+t}(\Omega)\cap\mathbf{H}_{0}^{1}(\Omega)$ such that
\begin{align}
&div\mathbf{w}^{*}=div\mathbf{w},\label{s2.6}\\
&\|\mathbf{w}^{*}\|_{\mathbf{H}^{1+t}(\Omega)}\leq C\|div \mathbf{w}\|_{t,\Omega}.\label{s2.7}
\end{align}
\end{lemma}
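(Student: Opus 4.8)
The plan is to read the statement as the assertion that the divergence operator possesses a regularity-preserving right inverse that respects the homogeneous Dirichlet condition, and to note first that the constant will depend only on $\Omega$ and $t$: neither the construction nor its norm bound involves $\mu$ or $\lambda$, so the required independence from the Lam\'e parameters is automatic. Set $g:=div\,\mathbf{w}$. Since $\mathbf{w}\in\mathbf{H}^{1+t}(\Omega)$ we have $g\in H^{t}(\Omega)$ with $\|g\|_{t,\Omega}\le\|\mathbf{w}\|_{\mathbf{H}^{1+t}(\Omega)}$, and because $\mathbf{w}\in\mathbf{H}^1_0(\Omega)$ the divergence theorem yields
\begin{equation*}
\int_\Omega g\,d\mathbf{x}=\int_\Omega div\,\mathbf{w}\,d\mathbf{x}=\int_{\partial\Omega}\mathbf{w}\cdot\mathbf{n}\,ds=0 .
\end{equation*}
Hence it suffices to produce a linear map $R$, defined on mean-zero data $\{g\in H^t(\Omega):\int_\Omega g\,d\mathbf{x}=0\}$, with $div(Rg)=g$, $Rg\in\mathbf{H}^{1+t}(\Omega)\cap\mathbf{H}^1_0(\Omega)$, and $\|Rg\|_{\mathbf{H}^{1+t}(\Omega)}\le C\|g\|_{t,\Omega}$; then $\mathbf{w}^*:=Rg$ gives \eqref{s2.6}--\eqref{s2.7}.

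I would build $R$ from the Bogovskii integral operator rather than by solving a boundary value problem, precisely because $\Omega$ is allowed to be concave. On a subdomain that is star-shaped with respect to a ball the Bogovskii operator is an explicit singular integral that inverts $div$ on mean-zero data, and a general Lipschitz polygon is handled by decomposing $\Omega$ into finitely many such star-shaped pieces and gluing with a partition of unity. The two properties I need are standard for this operator: it maps $H^{k}(\Omega)\cap L^2_0(\Omega)$ boundedly into $\mathbf{H}^{k+1}(\Omega)\cap\mathbf{H}^1_0(\Omega)$ for the integer endpoints $k=0$ and $k=1$ (the case $k=0$ being the classical surjectivity of $div:\mathbf{H}^1_0(\Omega)\to L^2_0(\Omega)$ with bounded right inverse), and its image consists of fields with vanishing trace, so the homogeneous boundary condition holds at both levels. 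These two bounds settle the endpoints $t=0$ and $t=1$ of the stated range directly.

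With the endpoint bounds in hand I would cover $0<t<1$ by operator interpolation. Since $R$ is bounded $L^2_0(\Omega)\to\mathbf{H}^1_0(\Omega)$ and $H^1(\Omega)\cap L^2_0(\Omega)\to\mathbf{H}^2(\Omega)\cap\mathbf{H}^1_0(\Omega)$, complex interpolation gives $R$ bounded from $[L^2_0,\,H^1\cap L^2_0]_t$ into $[\mathbf{H}^1_0,\,\mathbf{H}^2\cap\mathbf{H}^1_0]_t$. For a bounded Lipschitz domain and $0<t<1$ one has $[L^2_0(\Omega),\,H^1(\Omega)\cap L^2_0(\Omega)]_t=H^t(\Omega)\cap L^2_0(\Omega)$, because $L^2_0$ is a complemented subspace cut out by the mean functional and interpolation commutes with such intersections; on the target side the interpolation space is continuously included in $\mathbf{H}^{1+t}(\Omega)$, and its elements lie in $\mathbf{H}^1_0(\Omega)$ since both endpoint spaces do, hence it embeds into $\mathbf{H}^{1+t}(\Omega)\cap\mathbf{H}^1_0(\Omega)$. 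Chaining these inclusions produces $\mathbf{w}^*=Rg\in\mathbf{H}^{1+t}(\Omega)\cap\mathbf{H}^1_0(\Omega)$ with $\|\mathbf{w}^*\|_{\mathbf{H}^{1+t}(\Omega)}\le C\|g\|_{t,\Omega}=C\|div\,\mathbf{w}\|_{t,\Omega}$, while $div\,\mathbf{w}^*=div(Rg)=g=div\,\mathbf{w}$, which is exactly \eqref{s2.6}--\eqref{s2.7}.

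The main obstacle is the non-convexity of $\Omega$. A PDE-based construction, for instance a Helmholtz splitting $\mathbf{w}^*=\nabla\phi+\operatorname{curl}\psi$ that solves $\Delta\phi=g$ and then repairs the boundary trace by a divergence-free lifting, breaks down at reentrant corners, where the scalar potentials only reach $H^{2+t}$ for $t$ below a corner-dependent threshold strictly less than $1$; one therefore cannot attain the endpoint $t=1$ this way. Routing the argument through the Bogovskii operator avoids corner regularity altogether, since its $W^{k,2}\to W^{k+1,2}$ mapping properties are insensitive to the opening angles of $\partial\Omega$. The only remaining care is the bookkeeping in the interpolation step, namely that intersecting with the mean-zero subspace and with $\mathbf{H}^1_0(\Omega)$ is compatible with the interpolation identities used; these are standard facts for bounded Lipschitz domains and $t\in(0,1)$.
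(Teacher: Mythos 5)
Your reduction (set $g=div\,\mathbf{w}$, observe $g\in H^{t}(\Omega)$ with zero mean, and seek a bounded right inverse of the divergence) matches the paper's starting point, and the interpolation bookkeeping would be acceptable; but the argument collapses at your $k=1$ endpoint, and this is a genuine gap, not a technicality. The higher-order regularity theorem for the Bogovskii operator requires the datum to lie in $H^{1}_{0}(\Omega)$ (the closure of $C_{0}^{\infty}(\Omega)$ in $H^{1}$), not merely in $H^{1}(\Omega)\cap L^{2}_{0}(\Omega)$, and $g=div\,\mathbf{w}$ does not vanish on $\partial\Omega$ in general. Moreover, the failure cannot be repaired by a cleverer operator: on a polygon no right inverse (bounded or not) $R:H^{1}(\Omega)\cap L^{2}_{0}(\Omega)\to\mathbf{H}^{2}(\Omega)\cap\mathbf{H}^{1}_{0}(\Omega)$ of $div$ can exist, because the image of $div$ on $\mathbf{H}^{2}(\Omega)\cap\mathbf{H}^{1}_{0}(\Omega)$ is a proper subspace cut out by corner conditions. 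Indeed, let $\mathbf{v}\in\mathbf{H}^{2}(\Omega)\cap\mathbf{H}^{1}_{0}(\Omega)$ and let edges $e_{1},e_{2}$ meet at a corner with linearly independent unit tangents $t_{1},t_{2}$. Tangential differentiation of the boundary condition gives $(\nabla\mathbf{v})t_{i}=0$ on $e_{i}$, and the traces $G_{i}$ of $\nabla\mathbf{v}$ on the two edges satisfy the $H^{1/2}$ corner compatibility $\int_{0}^{\delta}|G_{1}(s)-G_{2}(s)|^{2}s^{-1}ds<\infty$ ($s$ = distance to the corner; see \S 1.5.2 in \cite{grisvard}). Writing $n_{1}=\alpha t_{1}+\beta t_{2}$, one gets $div\,\mathbf{v}|_{e_{1}}=\mathrm{tr}(G_{1})=n_{1}\cdot(G_{1}n_{1})=\beta\,n_{1}\cdot\bigl((G_{1}-G_{2})t_{2}\bigr)$, hence
\begin{equation*}
\int_{0}^{\delta}\bigl|div\,\mathbf{v}(s)\bigr|^{2}\,\frac{ds}{s}\leq C\int_{0}^{\delta}\bigl|G_{1}(s)-G_{2}(s)\bigr|^{2}\,\frac{ds}{s}<\infty .
\end{equation*}
A mean-zero $g\in H^{1}(\Omega)$ with $g\equiv 1$ near one corner violates this necessary condition, so it is the divergence of no field in $\mathbf{H}^{2}(\Omega)\cap\mathbf{H}^{1}_{0}(\Omega)$. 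With the $k=1$ endpoint false, the interpolation produces nothing, so as written the proposal proves the lemma for no $t>0$.

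This obstruction is exactly what the paper's one-line proof delegates to Theorem 3.1 of Arnold--Scott--Vogelius \cite{arnold}, whose whole point is the regular invertibility of $div$ on a polygon on the correctly constrained data space; it is also why the lemma is stated for data of the special form $g=div\,\mathbf{w}$ with $\mathbf{w}\in\mathbf{H}^{1+t}(\Omega)\cap\mathbf{H}^{1}_{0}(\Omega)$, which satisfies those corner conditions automatically. Your reduction treats $g$ as an arbitrary mean-zero element of $H^{t}(\Omega)$, thereby discarding precisely the structure that makes the statement true at and near $t=1$. The Bogovskii route can be salvaged only on the range $0\le t<\frac12$: interpolate against the correct zero-trace endpoint $H^{1}_{0}(\Omega)\cap L^{2}_{0}(\Omega)\to\mathbf{H}^{2}(\Omega)\cap\mathbf{H}^{1}_{0}(\Omega)$ and use $[L^{2}(\Omega),H^{1}_{0}(\Omega)]_{t}=H^{t}(\Omega)$ for $t<\frac12$. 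But that range misses the stated $0\le t\le 1$, and also misses the paper's own use of the lemma in \eqref{s2.15}, where $t>\frac12$.
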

\begin{proof}
Since $\mathbf{w}\in\mathbf{H}^{1+t}(\Omega)\cap\mathbf{H}_{0}^{1}(\Omega)$,
$div \mathbf{w}\in H^{t}(\Omega)$ and $\int_{\Omega}div \mathbf{w}d\mathbf{x}=0$,
by Theorem 3.1 in \cite{arnold} we know that there exists $\mathbf{w}^*\in\mathbf{H}^{1+t}(\Omega)\cap\mathbf{H}_{0}^{1}(\Omega)$
such that \eqref{s2.6} and \eqref{s2.7} hold.
\end{proof}

\begin{theorem}\label{thm2.1}
For $\mathbf{f}\in\mathbf{L}^{2}(\Omega)$,
\eqref{s2.4} has a unique solution $\mathbf{w}\in\mathbf{H}^{1+s}(\Omega)$ and $\mathbf{w}\in \mathbf{W}^{2,p}(\Omega)~(p=\frac{2}{2-s})$, and there exists a positive constant $C_{\Omega}$ such that
\begin{equation}\label{s2.8}
\|\mathbf{w}\|_{\mathbf{H}^{1+s}(\Omega)}+\lambda \|div\mathbf{w}\|_{s,\Omega}\leq C_{\Omega}\|\mathbf{f}\|_{\mathbf{L}^{2}(\Omega)},
\end{equation}
where $s<\frac{1}{2}$ and $s$ can be close to $\frac{1}{2}$ arbitrarily, and $C_{\Omega}$ is the a priori constant dependent on $\Omega$ but independent of $\mu, \lambda$ and $\mathbf{f}$.
\end{theorem}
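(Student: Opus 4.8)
The plan is to recast \eqref{s2.4} as a generalized (penalized) Stokes system and then transfer the reduced Stokes regularity available on a Lipschitz polygon to $\mathbf{w}$, tracking the dependence on $\lambda$ at every step. Introducing the pressure $p=(\mu+\lambda)\,\mathrm{div}\,\mathbf{w}$, the strong form of \eqref{s2.4} reads $-\mu\Delta\mathbf{w}+\nabla p=\mathbf{f}$ together with $\mathrm{div}\,\mathbf{w}=\frac{p}{\mu+\lambda}$ and $\mathbf{w}|_{\partial\Omega}=0$. Since the two target quantities satisfy $\lambda\|\mathrm{div}\,\mathbf{w}\|_{s,\Omega}=\frac{\lambda}{\mu+\lambda}\|p\|_{s,\Omega}\le\|p\|_{s,\Omega}$, it suffices to establish the single uniform bound $\|\mathbf{w}\|_{\mathbf{H}^{1+s}(\Omega)}+\|p\|_{s,\Omega}\le C_{\Omega}\|\mathbf{f}\|_{\mathbf{L}^2(\Omega)}$ with $C_\Omega$ independent of $\mu,\lambda$. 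Existence and uniqueness of $\mathbf{w}\in\mathbf{H}^1_0(\Omega)$ come from Korn's inequality and the Lax--Milgram theorem, and the qualitative membership $\mathbf{w}\in\mathbf{H}^{1+s}(\Omega)$ (for $s<\frac12$ arbitrarily close to $\frac12$) follows, for each fixed $\lambda$, from the standard regularity theory for second-order elliptic systems on Lipschitz polygons; the whole difficulty is to make the accompanying constant independent of $\lambda$.

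First I would record the uniform low-order bounds. Testing \eqref{s2.4} with $\mathbf{v}=\mathbf{w}$ and using Korn's inequality with $\mu\ge\mu_0$ gives $\|\mathbf{w}\|_{\mathbf{H}^1(\Omega)}\le C\|\mathbf{f}\|_{\mathbf{L}^2(\Omega)}$. A uniform $L^2$-bound $\|p\|_{0,\Omega}\le C\|\mathbf{f}\|_{\mathbf{L}^2(\Omega)}$ follows from the surjectivity of $\mathrm{div}:\mathbf{H}^1_0(\Omega)\to L^2_0(\Omega)$ with bounded right inverse (the inf--sup condition, cf.\ \cite{arnold}): choosing $\mathbf{v}$ with $\mathrm{div}\,\mathbf{v}=p$ and $\|\mathbf{v}\|_{\mathbf{H}^1(\Omega)}\le C\|p\|_{0,\Omega}$ and testing \eqref{s2.4} with it yields $\|p\|_{0,\Omega}^2\le C(\|\mathbf{f}\|_{\mathbf{L}^2(\Omega)}+\mu\|\mathbf{w}\|_{\mathbf{H}^1(\Omega)})\|p\|_{0,\Omega}$, and $\mu\le\mu_1$ closes it.

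Next comes the core step, following the argument of \cite{brenner1992}. With $\mathbf{w}\in\mathbf{H}^{1+s}(\Omega)$ already known, Lemma \ref{lem2.1} with $t=s$ produces $\mathbf{w}^{*}\in\mathbf{H}^{1+s}(\Omega)\cap\mathbf{H}^1_0(\Omega)$ with $\mathrm{div}\,\mathbf{w}^{*}=\mathrm{div}\,\mathbf{w}$ and $\|\mathbf{w}^{*}\|_{\mathbf{H}^{1+s}(\Omega)}\le C\|\mathrm{div}\,\mathbf{w}\|_{s,\Omega}$. Then $\mathbf{w}_0:=\mathbf{w}-\mathbf{w}^{*}$ is divergence-free, and since $(\mu+\lambda)\nabla\,\mathrm{div}\,\mathbf{w}=\nabla p$ is a pure gradient it may be moved into the pressure: the pair $(\mathbf{w}_0,-p)$ solves the genuine Stokes problem $-\mu\Delta\mathbf{w}_0+\nabla(-p)=\mathbf{f}+\mu\Delta\mathbf{w}^{*}$, $\mathrm{div}\,\mathbf{w}_0=0$, $\mathbf{w}_0|_{\partial\Omega}=0$. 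I would then invoke the Stokes regularity on the Lipschitz polygon $\Omega$ in the form $\|\mathbf{w}_0\|_{\mathbf{H}^{1+s}(\Omega)}+\|p\|_{s,\Omega}\le C\|\mathbf{f}+\mu\Delta\mathbf{w}^{*}\|_{\mathbf{H}^{s-1}(\Omega)}$, valid for $s<\frac12$ with a constant depending only on $\Omega$ (this is where concavity forces $s<\frac12$ and where the convex $\mathbf{H}^2$ theory \eqref{s2.5} is replaced by the reduced-regularity Lipschitz-domain theory). Estimating $\|\mathbf{f}\|_{\mathbf{H}^{s-1}(\Omega)}\le C\|\mathbf{f}\|_{\mathbf{L}^2(\Omega)}$ and $\mu\|\Delta\mathbf{w}^{*}\|_{\mathbf{H}^{s-1}(\Omega)}\le C\mu\|\mathbf{w}^{*}\|_{\mathbf{H}^{1+s}(\Omega)}\le C\mu\|\mathrm{div}\,\mathbf{w}\|_{s,\Omega}$ by Lemma \ref{lem2.1}, and recalling $\|p\|_{s,\Omega}=(\mu+\lambda)\|\mathrm{div}\,\mathbf{w}\|_{s,\Omega}$, this becomes the self-referential inequality $\|p\|_{s,\Omega}\le C\|\mathbf{f}\|_{\mathbf{L}^2(\Omega)}+\frac{C\mu}{\mu+\lambda}\|p\|_{s,\Omega}$.

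The main obstacle is closing this inequality uniformly in $\lambda\in(0,\infty)$. For $\lambda$ large, say $\lambda\ge(2C-1)\mu_1$, one has $\frac{C\mu}{\mu+\lambda}\le\frac12$, so the last term is absorbed and $\|p\|_{s,\Omega}\le 2C\|\mathbf{f}\|_{\mathbf{L}^2(\Omega)}$; together with $\|\mathbf{w}\|_{\mathbf{H}^{1+s}(\Omega)}\le\|\mathbf{w}_0\|_{\mathbf{H}^{1+s}(\Omega)}+\|\mathbf{w}^{*}\|_{\mathbf{H}^{1+s}(\Omega)}$ and Lemma \ref{lem2.1} this gives \eqref{s2.8}. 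For the remaining bounded range $0<\lambda\le(2C-1)\mu_1$ the coefficients $\mu$ and $\mu+\lambda$ lie in fixed compact intervals, so \eqref{s2.8} follows from the fixed-coefficient elliptic-system regularity with a constant depending only on $\mu_0,\mu_1$ and $\Omega$, after which $\lambda\|\mathrm{div}\,\mathbf{w}\|_{s,\Omega}\le(2C-1)\mu_1\|\mathbf{w}\|_{\mathbf{H}^{1+s}(\Omega)}$ is controlled; taking the larger of the two constants proves \eqref{s2.8}. Finally, the statement $\mathbf{w}\in\mathbf{W}^{2,p}(\Omega)$ with $p=\frac{2}{2-s}$ sits at the same differentiability scaling as $\mathbf{H}^{1+s}(\Omega)$ and is obtained from the corresponding $L^p$ Stokes/elliptic regularity (equivalently from the fact that the corner singular exponents exceed $s$, so the singular parts $r^{\gamma}$ belong to $\mathbf{W}^{2,p}$). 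The delicate points to watch throughout are that every regularity constant be genuinely $\lambda$-free and that the low-order norm $\|\mathbf{f}\|_{\mathbf{H}^{s-1}(\Omega)}$ absorb $\mathbf{f}\in\mathbf{L}^2(\Omega)$ without reintroducing $\lambda$-dependence.
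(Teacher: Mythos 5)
Your proposal is correct and follows essentially the same route as the paper's proof: uniform low-order bounds via Korn's inequality and the divergence right inverse, subtraction of the Lemma \ref{lem2.1} corrector to obtain a divergence-free genuine Stokes system, the reduced ($s<\frac{1}{2}$) Stokes regularity on the Lipschitz polygon, and absorption of the self-referential $\|div\,\mathbf{w}\|_{s,\Omega}$ term for large $\lambda$ with the bounded-$\lambda$ range handled by standard fixed-coefficient regularity. The only differences are cosmetic: you work with the pressure $p=(\mu+\lambda)\,div\,\mathbf{w}$ and viscosity $\mu$, whereas the paper divides through by $\mu$ and uses $g=-\frac{\mu+\lambda}{\mu}div\,\mathbf{w}$ in \eqref{s2.18}--\eqref{s2.19}.
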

\begin{proof}
Since $a(\cdot,\cdot)$ is $\mathbf{H}_0^{1}$-elliptic and $b(\cdot,\cdot)$ is continuous, from the Lax-Milgram theorem we know that
\eqref{s2.4} admits a unique solution $\mathbf{w}\in \mathbf{H}_0^{1}(\Omega)$.

From Theorem 4.2.5 in \cite{grisvard2} and \cite{farhloul} we know that there exist numbers $C_{i,z}$ such that
\begin{equation}\label{s2.9}
\mathbf{w}=\mathbf{w}_{0}+ \sum\limits_{i=1}^{N}\sum\limits_{z}C_{i,z}r_{i}^{z}\phi_{i,z}(\vartheta_{i}),
\end{equation}
where $\mathbf{w}_{0}\in \mathbf{H}^{2}(\Omega)$, $N$ is the number of corners of $\Omega$, $r_{i}$ is the distance from any point to the $i$th corner of $\Omega$, $z\in (0,1)$ is a real solution of
\begin{equation*}
sin^{2}(z\vartheta_{i})=z^{2}sin^{2}\vartheta_{i}
\end{equation*}
and $min\{z\}>\frac{1}{2}$, and $\phi_{i,z}(\vartheta_{i})$ is a vector field depending on $z, \lambda, \mu$ and sine and cosine function of interior angle $\vartheta_{i}$ at the $i$th corner (the expression of $\phi_{i,z}(\vartheta_{i})$ we refer to (4.2.14) in \cite{grisvard2}).

 From \eqref{s2.9} we can see that the singularity of $\mathbf{w}$ depends on $r_{i}^{z}$, thus we know that $\mathbf{w}\in \mathbf{H}^{1+t}(\Omega)$ for all $t\in (\frac{1}{2},min\{z\})$, and $\mathbf{w}\in \mathbf{W}^{2,p}(\Omega)$ with $p=\frac{2}{2-t}$.

 Next, we shall prove \eqref{s2.8}.
Let $\mathbf{v}=\mathbf{w}$ in \eqref{s2.4}, from  \eqref{s2.3} we have
\begin{equation}\label{s2.10}
2\mu\int _{\Omega }\varepsilon(\mathbf{w}):\varepsilon(\mathbf{w})d\mathbf{x}\leq \|\mathbf{f}\|_{\mathbf{L}^{2}(\Omega)}\|\mathbf{w}\|_{\mathbf{L}^{2}(\Omega)}.
\end{equation}
By using First Korn inequality (cf. Corollary 11.2.25 in \cite{brenner}) and  \eqref{s2.10} we deduce
\begin{align}
\|\mathbf{w}\|^2_{\mathbf{H}^{1}(\Omega)}&\leq C_{\Omega}\|\varepsilon(\mathbf{w})\|^2_{\underset{\sim}{\mathbf{L}}^2(\Omega)}\leq C_{\Omega}\|\mathbf{f}\|_{\mathbf{L}^{2}(\Omega)}\|\mathbf{w}\|_{\mathbf{L}^{2}(\Omega)} \nonumber\\
&\leq C_{\Omega}\|\mathbf{f}\|_{\mathbf{L}^{2}(\Omega)}\|\mathbf{w}\|_{\mathbf{H}^{1}(\Omega)},\nonumber
\end{align}
i.e.,
\begin{equation}\label{s2.11}
\|\mathbf{w}\|_{\mathbf{H}^{1}(\Omega)}\leq C_{\Omega}\|\mathbf{f}\|_{\mathbf{L}^{2}(\Omega)}.
\end{equation}
From Lemma 2.1 we know that there exists $\mathbf{w}^{*}\in\mathbf{H}_{0}^{1}(\Omega)$ such that
\begin{align}
&div\mathbf{w}^{*}=div\mathbf{w},\label{s2.12}\\
&\|\mathbf{w}^{*}\|_{\mathbf{H}^{1}(\Omega)}\leq C_{\Omega}\|div\mathbf{w}\|_{0,\Omega}.\label{s2.13}
\end{align}
Taking  $\mathbf{v}=\mathbf{w}^*$ in \eqref{s2.4} and using \eqref{s2.12} we deduce
\begin{equation*}
\lambda\int_{\Omega}|div\mathbf{w}|^{2}d\mathbf{x}\leq \|\mathbf{f}\|_{\mathbf{L}^{2}(\Omega)}\|\mathbf{w}^{*}\|_{\mathbf{L}^{2}(\Omega)} +2\mu\|\varepsilon(\mathbf{w})\|_{\underset{\sim}{\mathbf{L}}^2(\Omega)}\|\varepsilon(\mathbf{w}^{*})\|_{\underset{\sim}{\mathbf{L}}^2(\Omega)},
\end{equation*}
which together with \eqref{s2.11} and \eqref{s2.13} yields
\begin{equation}\label{s2.14}
\lambda\|div\mathbf{w}\|_{0,\Omega}\leq C_{\Omega}\|\mathbf{f}\|_{\mathbf{L}^{2}(\Omega)}.
\end{equation}
From \eqref{s2.11} and \eqref{s2.14} we obtain
\begin{equation*}
\|\mathbf{w}\|_{\mathbf{H}^{1}(\Omega)}+\lambda\|div\mathbf{w}\|_{0,\Omega}\leq C_{\Omega}\|\mathbf{f}\|_{\mathbf{L}^{2}(\Omega)}.
\end{equation*}
\indent By Lemma 2.1, there exists $\Phi\in\mathbf{H}^{1+t}(\Omega)\cap\mathbf{H}_{0}^{1}(\Omega)$ such that
\begin{align}
&div\Phi=div\mathbf{w},\nonumber\\
&\|\Phi\|_{\mathbf{H}^{1+t}(\Omega)}\leq C_{\Omega}\|div\mathbf{w}\|_{t,\Omega}.\label{s2.15}
\end{align}
From \eqref{s2.11} and \eqref{s2.15} we get
\begin{equation}\label{s2.16}
\|\Phi\|_{\mathbf{H}^{1+t}(\Omega)}\leq C_{\Omega}(|div\mathbf{w}|_{t,\Omega}+\|\mathbf{f}\|_{\mathbf{L}^{2}(\Omega)}).
\end{equation}
The equation corresponding to \eqref{s2.4} states as:
\begin{equation}\label{s2.17}
-\mu\Delta\mathbf{w}-(\mu+\lambda)\nabla(div\mathbf{w})=\mathbf{f}.
\end{equation}
Define
\begin{equation}\label{s2.18}
\mathbf{w}'=\mathbf{w}-\Phi,~~~~~~~~~g=-(\frac{\mu+\lambda}{\mu})div\mathbf{w},
\end{equation}
then $(\mathbf{w}',g)$ satisfies the following Stokes equation
\begin{equation}\label{s2.18r}
-\Delta\mathbf{w}'+\nabla g=\mathbf{F},~~~div\mathbf{w}'=0,
\end{equation}
where $\mathbf{F}=\frac{1}{\mu}\mathbf{f}+\Delta\Phi$ and $\Delta\Phi\in \mathbf{H}^{-1+t}(\Omega)\subset \mathbf{H}^{-1+s}(\Omega)$.\\
By Theorem 7 in \cite{savare} and the closed graph theorem (see also page 847 in \cite{farhloul})  we have $(\mathbf{w}',g)\in \mathbf{H}^{1+s}(\Omega)\times H^{s}(\Omega)$ with the estimate
\begin{equation}\label{s2.19}
\|\mathbf{w}'\|_{\mathbf{H}^{1+s}(\Omega)}+\|g\|_{s,\Omega}\leq C\|\frac{1}{\mu}\mathbf{f}+\Delta\Phi\|_{\mathbf{H}^{-1+s}(\Omega)}
\end{equation}
where $s<\frac{1}{2}$ and $s$ can be close to $\frac{1}{2}$ arbitrarily, thus we get $\mathbf{w}=\mathbf{w}'+\Phi\in \mathbf{H}^{1+s}(\Omega)$.

Substituting \eqref{s2.18} into \eqref{s2.19} and applying \eqref{s2.16} yield
\begin{equation}\label{s2.20}
\|\mathbf{w}\|_{\mathbf{H}^{1+s}(\Omega)}+\frac{\mu+\lambda}{\mu}|div\mathbf{w}|_{s,\Omega}
\leq C_{\Omega}(\|\mathbf{f}\|_{\mathbf{L}^{2}(\Omega)}+
|div\mathbf{w}|_{s,\Omega}).
\end{equation}
Let $\lambda_{0}=2C_{\Omega}\mu_{1}$ where $C_{\Omega}$ is the constant in \eqref{s2.20}.
For $\lambda>\lambda_{0}$, we obtain from \eqref{s2.20} that
\begin{equation}\label{s2.21}
\|\mathbf{w}\|_{\mathbf{H}^{1+s}(\Omega)}+\frac{\lambda}{2\mu_{1}}|div\mathbf{w}|_{s,\Omega}
\leq C_{\Omega}\|\mathbf{f}\|_{\mathbf{L}^{2}(\Omega)},
\end{equation}
which implies \eqref{s2.8} for $\lambda>\lambda_{0}$. When $\lambda\leq\lambda_{0}$, the conclusion follows directly from the standard elliptic regularity estimate for the problem.
\end{proof}

In the proof of Theorem 2.1, in (\ref{s2.18r}) we use the result that the right-hand side of Stokes equation $\mathbf{F}=\frac{1}{\mu}\mathbf{f}+\Delta\Phi\in \mathbf{H}^{-1+s}(\Omega)~(s<\frac{1}{2})$ to get $\mathbf{w}'\in  \mathbf{H}^{1+s}(\Omega)$, then we derive (\ref{s2.8}).
But in fact, $\mathbf{F}=\frac{1}{\mu}\mathbf{f}+\Delta\Phi\in \mathbf{H}^{-1+t}(\Omega)~(t>\frac{1}{2})$.
In addition, from \S6.2 in \cite{grisvard2} we know that when the right-hand side $\mathbf{F}\in \mathbf{L}^{2}(\Omega)$, the generalized solution of Stokes equation $\mathbf{w}'\in  \mathbf{H}^{1+t}(\Omega)$ and $g\in H^{t}(\Omega)~(t>\frac{1}{2})$.
Thus, by interpolation of Sobolev space (see for instance \cite{brenner}), when $\mathbf{F}=\frac{1}{\mu}\mathbf{f}+\Delta\Phi\in \mathbf{H}^{-1+t}(\Omega)~(t>\frac{1}{2})$,
we have $\mathbf{w}'\in  \mathbf{H}^{-1+t'}(\Omega)~(t'>\frac{1}{2})$.
Therefore, we think the following regularity assumption is reasonable:

{\bf $\mathbf{R}(\Omega)$.} For any $\mathbf{f}\in \mathbf{L}^{2}(\Omega)$, there exists $\mathbf{w}\in \mathbf{H}^{1+s}(\Omega)\cap \mathbf{W}^{2,p}(\Omega)\cap \mathbf{H}^{1}_{0}(\Omega)$ satisfying
\begin{equation*}
a(\mathbf{w}, \mathbf{v})=b(\mathbf{f}, \mathbf{v}),~~~\forall \mathbf{v}\in \mathbf{H}^{1}_{0}(\Omega),
\end{equation*}
and
\begin{equation*}
\|\mathbf{w}\|_{\mathbf{H}^{1+s}(\Omega)}+\lambda \|div\mathbf{w}\|_{s,\Omega}\leq C_{\Omega}\|\mathbf{f}\|_{\mathbf{L}^{2}(\Omega)},
\end{equation*}
for some $\frac{1}{2}-\varepsilon<s\leq 1$ where $\varepsilon>0$ is an arbitrarily small constant and $p=\frac{2}{2-s}$.

\section{The nonconforming C-R element approximation}\label{tensor-models}

Assume that $\pi_{h}=\{\kappa\}$ is a regular triangulation of $\Omega$ with mesh-size function $h(\mathbf{x})$ whose value is the diameter $h_{\kappa}$ of the element $\kappa$ containing $\mathbf{x}$, $\frac{h_{\kappa}}{\rho_{\kappa}}\leq \nu$ with $\rho_{\kappa}$ the supremum of diameter of circle contained in $\kappa$ (see (17.1) in \cite{ciarlet}), and $h=\max \limits_{\mathbf{x}\in\Omega}h(\mathbf{x})$ is the mesh diameter of $\pi_{h}$.
Let $\mathcal{E}_{h}$ denote the set of all edges of elements $\kappa\in\pi_{h}$. We split this set as $\mathcal{E}_{h}
 =\mathcal{E}_{h}^{i}\cup\mathcal{E}_{h}^{b}$ with $\mathcal{E}_{h}^{i}$ and $\mathcal{E}_{h}^{b}$ being the sets of inner and boundary edges, respectively. Let $S_0^{h}(\Omega)$ be the C-R element space defined on $\pi_{h}$:
\begin{equation*}
S_{0}^{h}(\Omega)=\{v\in L^{2}(\Omega): v\mid_{\kappa} \in P_{1}(\kappa)~~\forall  \kappa\in \pi_{h}, \int_{\ell}[[v]]ds=0~~\forall \ell\in \mathcal{E}_{h}^{i},~~\int_{\ell}vds=0~~\forall \ell\in \mathcal{E}_{h}^{b}\},
\end{equation*}
where $[[\cdot]]$ is the jump across an edge $\ell\in \mathcal{E}_{h}$ defined as follows.

 If $\ell\in \mathcal{E}_{h}^{i}$ is shared by two elements $\kappa_1$ and $\kappa_2$ in $\pi_h$, and $v_i=v|_{\kappa_i} (i=1,2)$, then
 $[[v]]=(v_{1}-v_{2})|_{\ell}$; If $\ell\in \mathcal{E}_{h}^{b}$, then $[[v]]=v|_{\ell}$.

Denote $\mathbf{S}_{0}^{h}(\Omega)=S_{0}^{h}(\Omega)\times S_{0}^{h}(\Omega)$, and define
\begin{equation*}
\mathbf{H}(h)=\mathbf{S}_{0}^{h}(\Omega) +\mathbf{H}_{0}^{1}(\Omega)=\{ \mathbf{w}_h+
\mathbf{w}: \mathbf{w}_h\in \mathbf{S}_{0}^h(\Omega),  \mathbf{w}\in \mathbf{H}_{0}^{1}(\Omega)\}.
\end{equation*}
Denote
\begin{equation}\label{s3.1}
a_{h}(\mathbf{u},\mathbf{v})=\mu\int _{\Omega }\mathbf{\nabla}_{h}\mathbf{u}:\mathbf{\nabla}_{h}\mathbf{v}d\mathbf{x}+(\mu+\lambda)\int_{\Omega} (div_{h}\mathbf{u})(div_{h}\mathbf{v})d\mathbf{x},~~~\forall \mathbf{u},\mathbf{v}\in\mathbf{H}(h),
\end{equation}
where $(\mathbf{\nabla}_{h}\mathbf{v})|_\kappa=\mathbf{\nabla}(\mathbf{v}|_\kappa)$ and $(div_{h}\mathbf{v})|_\kappa=div(\mathbf{v}|_\kappa)$ for any $\mathbf{v}\in\mathbf{H}(h)$. It is easy to know that $a_{h}(\cdot,\cdot)$ is continuous and positive definite in $\mathbf{H}(h)$.

Define the nonconforming energy norm $\|\cdot\|_{h}$ on $\mathbf{H}(h)$ by
\begin{equation*}
\|\mathbf{v}\|_{h}=\sqrt{a_{h}(\mathbf{v},\mathbf{v})},
\end{equation*}
and denote
\begin{equation*}
|\mathbf{v}|_{1,h}=\sqrt{\sum\limits_{\kappa\in\pi_{h}}|\mathbf{v}|_{\mathbf{H}^1(\kappa)}^2}.
\end{equation*}
From the Poincar$\acute{e}$-Friedrichs inequality (cf. \cite{brenner2003}) we know that $|\cdot|_{1,h}$ is also a norm on $\mathbf{H}(h)$, and a simple calculation shows that
\begin{equation*}
|\mathbf{v}|_{1,h}^2=\sum\limits_{\kappa\in\pi_{h}}|\mathbf{v}|_{\mathbf{H}^1(\kappa)}^2=\sum\limits_{\kappa\in\pi_{h}}\int_\kappa \mathbf{\nabla}\mathbf{v}:\mathbf{\nabla}\mathbf{v}d\mathbf{x}\leq C \|\mathbf{v}\|_{h}^2.
\end{equation*}
Define the C-R element interpolation operator $\mathbf{I}_{h}$ : $\mathbf{H}_{0}^{1}(\Omega) \to \mathbf{S}_{0}^{h}(\Omega)$ by:
\begin{equation*}
\int_{\ell}\mathbf{I}_{h}\mathbf{v} ds=\int_{\ell}\mathbf{v} ds,~~~ \forall~\ell\in \mathcal{E}_{h}.
\end{equation*}
The C-R nonconforming finite element discretization of \eqref{s2.2} is as follows: Find $(\omega_h, \mathbf{u}_h)\in \mathbb{R}\times \mathbf{S}_{0}^{h}(\Omega)$ with $\|\mathbf{u}_h\|_{h}=1$ such that
\begin{equation}\label{s3.2}
a_{h}(\mathbf{u}_h,\mathbf{v}_h)=\omega_h b(\mathbf{u}_h,\mathbf{v}_h),~~~\forall \mathbf{v}_h\in \mathbf{S}_{0}^{h}(\Omega).
\end{equation}
\indent The source problem associated with \eqref{s3.2} states as: Find $\mathbf{w_h}\in \mathbf{S}_{0}^{h}(\Omega)$ such that
\begin{equation}\label{s3.3}
a_{h}(\mathbf{w_h},\mathbf{v})= b(\mathbf{f},\mathbf{v}),~~~\forall \mathbf{v}\in \mathbf{S}_{0}^{h}(\Omega).
\end{equation}
The well-posedness of \eqref{s3.3} has also been discussed in \cite{brenner}.

Let $\mathbf{w}$  be the solution of \eqref{s2.4}. Define the
consistency term: For any $\mathbf{v}\in \mathbf{H}(h)$,
\begin{equation}\label{s3.4}
D_{h}(\mathbf{w},\mathbf{v})=a_{h}(\mathbf{w},\mathbf{v})-b(\mathbf{f}, \mathbf{v}).
\end{equation}
\indent To estimate the consistency term, we need the following trace inequalities.
\begin{lemma}\label{lem3.1}
For any $\kappa\in \pi_{h}$ and $w\in H^{1+s}(\kappa)$, the following trace inequalities hold:
\begin{align*}
&\|w\|_{0,\partial\kappa}
\leq C(h_{\kappa}^{-\frac{1}{2}}\|w\|_{0,\kappa}+h_{\kappa}^{\frac{1}{2}}|w|_{1,\kappa}),\\
&\|\nabla w\|_{0,\partial\kappa}
\leq C(h_{\kappa}^{-\frac{3}{2}}\| w\|_{0,\kappa}+h_{\kappa}^{-\frac{1}{2}}| w|_{1,\kappa}+h_{\kappa}^{s-\frac{1}{2}}|w|_{1+s,\kappa})\quad(\frac{1}{2}\leq s\leq 1).
\end{align*}
 \end{lemma}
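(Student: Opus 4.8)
The plan is to reduce both estimates to fixed inequalities on a reference triangle by the standard affine-scaling technique, so that the $h_\kappa$-powers come out automatically from the Jacobian and the shape-regularity hypothesis $h_\kappa/\rho_\kappa\le\nu$. Write the affine map $F_\kappa(\widehat{\xx})=B_\kappa\widehat{\xx}+\mathbf{b}_\kappa$ from a reference triangle $\widehat\kappa$ onto $\kappa$, and for $w$ on $\kappa$ set $\widehat w=w\circ F_\kappa$ on $\widehat\kappa$. Regularity gives the familiar bounds $|\det B_\kappa|\sim h_\kappa^2$, $\|B_\kappa\|\sim h_\kappa$, $\|B_\kappa^{-1}\|\sim h_\kappa^{-1}$, together with the pullback relations for the $L^2$ norm, the $H^1$ seminorm, and (crucially here) the Slobodeckij seminorm, namely $\|\widehat w\|_{0,\widehat\kappa}\sim h_\kappa^{-1}\|w\|_{0,\kappa}$, $|\widehat w|_{1,\widehat\kappa}\sim|w|_{1,\kappa}$, and $|\widehat v|_{s,\widehat\kappa}\sim h_\kappa^{s-1}|v|_{s,\kappa}$, the last following from the change of variables in the double integral defining $|\cdot|_{s}$. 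On the boundary, since $\mathrm{d}s\sim h_\kappa\,\mathrm{d}\widehat s$, one has $\|w\|_{0,\partial\kappa}\sim h_\kappa^{1/2}\|\widehat w\|_{0,\partial\widehat\kappa}$.

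For the first inequality I would invoke the continuous trace embedding $H^1(\widehat\kappa)\hookrightarrow L^2(\partial\widehat\kappa)$ on the fixed domain,
\[
\|\widehat w\|_{0,\partial\widehat\kappa}\le C\bigl(\|\widehat w\|_{0,\widehat\kappa}+|\widehat w|_{1,\widehat\kappa}\bigr),
\]
and then substitute the scaling relations above: the boundary factor $h_\kappa^{1/2}$ multiplies $h_\kappa^{-1}\|w\|_{0,\kappa}$ and $|w|_{1,\kappa}$ to yield exactly $h_\kappa^{-1/2}\|w\|_{0,\kappa}+h_\kappa^{1/2}|w|_{1,\kappa}$.

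For the second inequality the clean route is to apply the reference trace embedding to the gradient, using $H^s(\widehat\kappa)\hookrightarrow L^2(\partial\widehat\kappa)$ (valid for $s>1/2$) in the form
\[
\|\widehat\nabla\widehat w\|_{0,\partial\widehat\kappa}\le C\|\widehat w\|_{1+s,\widehat\kappa}=C\bigl(\|\widehat w\|_{0,\widehat\kappa}^2+|\widehat w|_{1,\widehat\kappa}^2+|\widehat w|_{1+s,\widehat\kappa}^2\bigr)^{1/2}.
\]
The chain rule gives $\widehat\nabla\widehat w=B_\kappa^T(\nabla w)\circ F_\kappa$, so $\|\widehat\nabla\widehat w\|_{0,\partial\widehat\kappa}\sim h_\kappa^{1/2}\|\nabla w\|_{0,\partial\kappa}$ (the extra $\|B_\kappa\|\sim h_\kappa$ from $B_\kappa^T$ combining with the boundary-measure factor). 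Inserting $\|\widehat w\|_{0,\widehat\kappa}\sim h_\kappa^{-1}\|w\|_{0,\kappa}$, $|\widehat w|_{1,\widehat\kappa}\sim|w|_{1,\kappa}$ and $|\widehat w|_{1+s,\widehat\kappa}\sim h_\kappa^{s}|w|_{1+s,\kappa}$, and dividing through by $h_\kappa^{1/2}$, produces precisely the three terms $h_\kappa^{-3/2}\|w\|_{0,\kappa}$, $h_\kappa^{-1/2}|w|_{1,\kappa}$ and $h_\kappa^{s-1/2}|w|_{1+s,\kappa}$. Using the full norm $\|\widehat w\|_{1+s,\widehat\kappa}$ rather than the bare seminorm of $\nabla w$ is what makes the lower-order term $h_\kappa^{-3/2}\|w\|_{0,\kappa}$ appear, and it is harmless since it only enlarges the right-hand side.

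The delicate point — and the step I would treat most carefully — is the fractional trace on the reference element at the lower end of the range. The embedding $H^s(\widehat\kappa)\hookrightarrow L^2(\partial\widehat\kappa)$ is only borderline at $s=\tfrac12$ (where $\widehat\nabla\widehat w\in H^{1/2}$ need not possess an $L^2$ boundary trace), and its constant degenerates as $s\downarrow\tfrac12$; for each fixed $s\in(\tfrac12,1]$ it is finite and depends only on $s$ and the reference shape, which is all that is needed since the $h_\kappa$-dependence is carried entirely by the scaling. I would therefore establish the estimate for $s>\tfrac12$ by the argument above and obtain the endpoint either from the refined trace inequalities of \cite{bernardi,caiz}, whose constants are tracked explicitly, or by simply reading the bound at a value of $s$ arbitrarily close to $\tfrac12$, which suffices for every later use. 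A secondary bookkeeping point is verifying the Slobodeckij scaling $|\widehat v|_{s,\widehat\kappa}\sim h_\kappa^{s-1}|v|_{s,\kappa}$ from the double-integral definition, and remembering the additional $\|B_\kappa\|$ factor contributed by $B_\kappa^T$ when the trace is taken of $\nabla w$ rather than of $w$.
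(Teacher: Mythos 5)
Your proposal is correct and follows exactly the route the paper intends: its entire proof is the one-line remark that the result ``is followed by using the trace theorem on the reference element and the scaling argument,'' which is precisely the affine-scaling argument you carry out in detail (including the Slobodeckij scaling $|\widehat v|_{s,\widehat\kappa}\sim h_\kappa^{s-1}|v|_{s,\kappa}$ and the extra $\|B_\kappa\|$ factor from $B_\kappa^T$). Your explicit flagging of the borderline case $s=\tfrac12$, where the reference trace embedding $H^{1/2}(\widehat\kappa)\hookrightarrow L^2(\partial\widehat\kappa)$ degenerates, is a point the paper silently glosses over, and your handling of it is sound.
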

\begin{proof}
 The conclusion is followed by using the trace theorem on the reference element and the scaling argument.
\end{proof}

\begin{lemma}\label{lem3.2}
Let $\ell\subset \partial\kappa$ be an edge of element $\kappa$. For any $g\in H^{\frac{1}{2}-r}(\ell)$, there exists a lifting $v_{g}$ of $g$ such that $ v_{g}\in H^{1-r}(\kappa)$ ($0<r<\frac{1}{2}$), $v_{g}|_{\ell}=g$, $v_{g}|_{\partial\kappa\backslash \ell}=0$ and
\begin{equation*}
|v_{g}|_{1-r,\kappa}+h_{\kappa}^{r-1}\|v_{g}\|_{0,\kappa}\leq C h_{\kappa}^{-\delta}\|g\|_{\frac{1}{2}-r,\partial\kappa}.
\end{equation*}
where $C$ depends on the constant $\nu$ in regular triangulation but is independent of $\lambda$,
and $\delta=\frac{1}{2}-r$.
\end{lemma}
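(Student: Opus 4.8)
The plan is to reduce the statement to a fixed reference element $\hat\kappa$ with a distinguished reference edge $\hat\ell$, build the lifting there, and then transport it to $\kappa$ by the affine map $F_\kappa(\hat x)=B_\kappa\hat x+b_\kappa$, carefully tracking the power of $h_\kappa$ each norm acquires under scaling. On $\hat\kappa$ the construction is in two stages. First, since the boundary exponent $\frac12-r$ is \emph{strictly} below $\frac12$ (this is where $r>0$ is used), extension by zero from $\hat\ell$ to the whole polygonal boundary $\partial\hat\kappa$ is bounded on $H^{\frac12-r}$; writing $\hat g:=g\circ F_\kappa$ on $\hat\ell$ and $\hat g_0$ for its zero extension, one has $\|\hat g_0\|_{\frac12-r,\partial\hat\kappa}\le C\|\hat g\|_{\frac12-r,\hat\ell}$. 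Second, because $1-r>\frac12$, the trace operator $H^{1-r}(\hat\kappa)\to H^{\frac12-r}(\partial\hat\kappa)$ admits a bounded right inverse $\hat E$; putting $\hat v_g:=\hat E\hat g_0$ gives $\hat v_g\in H^{1-r}(\hat\kappa)$ with $\hat v_g|_{\hat\ell}=\hat g$, $\hat v_g|_{\partial\hat\kappa\setminus\hat\ell}=0$ and $\|\hat v_g\|_{1-r,\hat\kappa}\le C\|\hat g\|_{\frac12-r,\hat\ell}$, the constant depending only on $\hat\kappa$. Setting $v_g:=\hat v_g\circ F_\kappa^{-1}$ then yields a lifting on $\kappa$ with exactly the required boundary behaviour.

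Next I would record the scalings. Shape regularity $h_\kappa/\rho_\kappa\le\nu$ gives $\|B_\kappa\|\simeq h_\kappa$, $\|B_\kappa^{-1}\|\simeq h_\kappa^{-1}$ and $|\det B_\kappa|\simeq h_\kappa^{2}$, with equivalence constants depending only on $\nu$. Using the Gagliardo integral form of the fractional seminorm and changing variables, one finds on the element $\|v_g\|_{0,\kappa}\simeq h_\kappa\|\hat v_g\|_{0,\hat\kappa}$ and $|v_g|_{1-r,\kappa}\simeq h_\kappa^{\,r}|\hat v_g|_{1-r,\hat\kappa}$, and on the edge $\|g\|_{0,\ell}\simeq h_\kappa^{1/2}\|\hat g\|_{0,\hat\ell}$ together with $|g|_{\frac12-r,\ell}\simeq h_\kappa^{\,r}|\hat g|_{\frac12-r,\hat\ell}$. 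Consequently the whole left-hand side of the claim scales as $h_\kappa^{r-1}\|v_g\|_{0,\kappa}+|v_g|_{1-r,\kappa}\simeq h_\kappa^{\,r}\bigl(\|\hat v_g\|_{0,\hat\kappa}+|\hat v_g|_{1-r,\hat\kappa}\bigr)\le Ch_\kappa^{\,r}\|\hat v_g\|_{1-r,\hat\kappa}\le Ch_\kappa^{\,r}\|\hat g\|_{\frac12-r,\hat\ell}$, where the last inequality invokes the reference estimate from the first paragraph.

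The step that I expect to require the most care, and which produces the factor $h_\kappa^{-\delta}$, is the final inversion of the boundary scalings: the two parts of the fractional boundary norm scale with different powers, $\|\hat g\|_{\frac12-r,\hat\ell}^2\simeq h_\kappa^{-1}\|g\|_{0,\ell}^2+h_\kappa^{-2r}|g|_{\frac12-r,\ell}^2$. Substituting this into the previous bound and using $2r<1$, so that $h_\kappa^{2r-1}=h_\kappa^{-2\delta}$ with $\delta=\frac12-r$, the square of the left-hand side is $\le C\bigl(h_\kappa^{-2\delta}\|g\|_{0,\ell}^2+|g|_{\frac12-r,\ell}^2\bigr)\le Ch_\kappa^{-2\delta}\|g\|_{\frac12-r,\partial\kappa}^2$, where in the last step I absorb the harmless factor $h_\kappa^{2\delta}\le(\operatorname{diam}\Omega)^{2\delta}$ into $C$ and use $\|g\|_{\frac12-r,\ell}\le\|g\|_{\frac12-r,\partial\kappa}$. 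Taking square roots gives precisely the asserted bound with $h_\kappa^{-\delta}$. Since no elasticity operator enters anywhere, the constant depends only on the reference geometry and on the shape-regularity constant $\nu$, hence is independent of $\mu$ and $\lambda$. The main obstacle is thus the twofold scaling bookkeeping: the zero-extension step forces the boundary exponent below $\frac12$, and the loss $h_\kappa^{-\delta}$ originates entirely from the $L^2$ component of the edge norm scaling worse than its seminorm component.
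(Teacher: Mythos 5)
Your proposal is correct and follows essentially the same route as the paper's proof: pull back to the reference element, extend $\hat g$ by zero across $\partial\hat\kappa$ (valid since $\tfrac12-r<\tfrac12$), apply the inverse trace theorem there, and push forward with scaling bookkeeping, the factor $h_\kappa^{-\delta}$ arising exactly as you say from the $L^2$ edge component scaling like $h_\kappa^{-1/2}$ against the seminorm's $h_\kappa^{-r}$. The only cosmetic difference is that the paper tracks one-sided bounds through $\rho_\kappa$ and $\nu$ rather than your two-sided $\simeq$ equivalences.
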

\begin{proof}
Let $\widehat{\kappa}$ denote the reference element, introduce the affine mappings $\widehat{x}\to F_{\kappa}(\widehat{x})=B_{\kappa}\widehat{x}+b_{\kappa}$ which maps the reference element $\widehat{\kappa}$ on $\kappa$ and
 $\widehat{x}\to B_{\ell}\widehat{x}+b_{\ell}$  which maps the reference edge $\widehat{\ell}$ on an edge $\ell$ of $\kappa$.
 Then, from \cite{ciarlet2013} we have
 $$|det B_{\kappa}|\leq C h_{\kappa}^{2},\quad\|B_{\kappa}\|\leq C h_{\kappa},\quad|det B_{\kappa}|^{-1}\leq C \rho_{\kappa}^{-2},\quad
 \|B_{\kappa}^{-1}\|\leq C \rho_{\kappa}^{-1},\quad|det B_{\ell}|\leq C h_{\kappa},$$
 where $\|\cdot\|$ stands for the Euclidean norm of matrix.

 From Theorem 1.5.2.3 in \cite{grisvard} we know that any $\widehat{g}\in H^{\frac{1}{2}-r}(\widehat{\ell})$ can be extended to be a function belonging to $\mathbf{H}^{\frac{1}{2}-s}(\partial\widehat{\kappa})$ through
the trivial extension by zero to all of $\partial\widehat{\kappa}$.
Thanks to the inverse trace theorem (see page 387 in \cite{kufner}, or page 1767 in \cite{caiz}) we know that there exists a lifting $\widehat{v}_{g}$ of $\widehat{g}$ such that
$ \widehat{v}_{g}\in H^{1-r}(\widehat{\kappa})$, $\widehat{v}_{g}|_{\partial\widehat{\kappa}}=\widehat{g}$ and
\begin{equation}\label{exms1}
\|\widehat{v}_{g}\|_{1-r,\widehat{\kappa}}\leq C\|\widehat{g}\|_{\frac{1}{2}-r,\partial\widehat{\kappa}} =
C\|\widehat{g}\|_{\frac{1}{2}-r,\widehat{\ell}}.
\end{equation}
From the relationships between the seminorms on affine equivalent elements in Sobolev space
(see, e.g., \cite{ciarlet,ciarlet2013}) we deduce that
\begin{align}\label{exms11}
&h_{\kappa}^{r-1}\|v_{g}\|_{0,\kappa}\leq Ch_{\kappa}^{r-1}|det B_{\kappa}|^{\frac{1}{2}}\|\widehat{v}_{g}\|_{0,\widehat{\kappa}}
\leq Ch_{\kappa}^{r-1}h_{\kappa}\|\widehat{v}_{g}\|_{0,\widehat{\kappa}}=Ch_{\kappa}^{r}\|\widehat{v}_{g}\|_{0,\widehat{\kappa}},\\\label{exms12}
&|v_{g}|_{1-r,\kappa}\leq \|B_{\kappa}^{-1}\|^{1-r}|det B_{\kappa}|^{\frac{1}{2}}|\widehat{v}_{g}|_{1-r,\widehat{\kappa}}
\leq (\frac{1}{\rho_{\kappa}})^{1-r} h_{\kappa}|\widehat{v}_{g}|_{1-r,\widehat{\kappa}},\\\label{exms13}
&\|\widehat{g}\|_{0,\widehat{\ell}}\leq C|det B_{\ell}|^{-\frac{1}{2}}\|{g}\|_{0,\ell}
\leq C \rho_{\kappa}^{-\frac{1}{2}}|g|_{0,\ell},\\\label{exms14}
&|\widehat{g}|_{\frac{1}{2}-r,\widehat{\ell}}\leq C\|B_{\ell}\|^{\frac{1}{2}-r}|det B_{\ell}|^{-\frac{1}{2}}|{g}|_{\frac{1}{2}-r,\ell}
\leq C h_{\ell}^{\frac{1}{2}-r} \rho_{\kappa}^{-\frac{1}{2}}|g|_{\frac{1}{2}-r,\ell}.
\end{align}
Since $\frac{h_{\kappa}}{\rho_{\kappa}}\leq \nu$, we have $\rho_{\kappa}\geq \frac{h_{\kappa}}{\nu}$.
Thus, from (\ref{exms11}), (\ref{exms12}) and (\ref{exms1}) we deduce
\begin{align*}
&h_{\kappa}^{r-1}\|v_{g}\|_{0,\kappa}+|v_{g}|_{1-r,\kappa}\leq C(h_{\kappa}^{r} \|\widehat{v}_{g}\|_{0,\widehat{\kappa}}+ (\frac{\nu}{h_{\kappa}})^{1-r} h_{\kappa}|\widehat{v}_{g}|_{1-r,\widehat{\kappa}}\\
&\quad\quad\leq C\max\{1,\nu^{1-r}\}h_{\kappa}^{r}\|\widehat{v}_{g}\|_{1-r,\widehat{\kappa}}\leq C\nu^{1-r} h_{\kappa}^{r}\|\widehat{g}\|_{\frac{1}{2}-r,\widehat{\ell}},
\end{align*}
and from (\ref{exms13}) and (\ref{exms14}) we derive
\begin{equation*}
\|\widehat{g}\|_{0,\widehat{\ell}}+|\widehat{g}|_{\frac{1}{2}-r,\widehat{\ell}}\leq C\rho_{\kappa}^{-\frac{1}{2}}|g|_{0,\ell}
+Ch_{\ell}^{\frac{1}{2}-r} \rho_{\kappa}^{-\frac{1}{2}}|g|_{\frac{1}{2}-r,\ell}
\leq C\nu^{\frac{1}{2}}\max\{h_{\kappa}^{-r}, h_{\kappa}^{-\frac{1}{2}}\}\|g\|_{\frac{1}{2}-r,\ell}.
\end{equation*}
Combining the above two estimates, we get the desired result.
\end{proof}

\begin{lemma}\label{lem3.3}
Let $\mathbf{w}$ be the solution of (\ref{s2.4}),
and $\mathbf{w}\in\mathbf{H}^{1+r}(\Omega)\cap \mathbf{W}^{2,p}(\Omega)$ $(0<r<\frac{1}{2}$, $p=\frac{2}{2-r})$,
then
\begin{equation}\label{s3.5}
\|\mu\nabla\mathbf{w}\gamma
+(\lambda+\mu)div\mathbf{w} \gamma\|_{\mathbf{H}^{r-\frac{1}{2}}(\ell)}
\leq  C h_{\kappa}^{-\delta}( h_{\ell}^{1-r}  \|\mathbf{f}\|_{\mathbf{L}^{2}(\kappa)}
+\mu\|\nabla\mathbf{w}\|_{\underset{\sim}{\mathbf{H}}^r(\kappa)}
+(\lambda+\mu)\|div\mathbf{w}\|_{r,\kappa}),~~~\forall \kappa\in
\pi_{h},~\ell\subset\partial\kappa,
\end{equation}
where $\gamma$ is the unit out normal to $\partial\kappa$, $C$ depends on the constant $\nu$ in regular triangulation but is independent of $\lambda$ and $\delta=\frac{1}{2}-r$.
\end{lemma}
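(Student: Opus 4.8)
The plan is to identify the quantity $\mu\nabla\mathbf{w}\gamma+(\lambda+\mu)div\mathbf{w}\,\gamma$ as the normal trace $\Sigma\gamma$ of the stress field $\Sigma=\mu\nabla\mathbf{w}+(\lambda+\mu)(div\mathbf{w})I$ (with $I$ the identity matrix), and to estimate it by duality. Since $0<r<\frac12$, the target space $\mathbf{H}^{r-\frac12}(\ell)$ is the dual of $\mathbf{H}^{\frac12-r}(\ell)$ on the edge, so I would write $\|\Sigma\gamma\|_{\mathbf{H}^{r-\frac12}(\ell)}=\sup_{\mathbf{g}}\langle\Sigma\gamma,\mathbf{g}\rangle_\ell/\|\mathbf{g}\|_{\frac12-r,\ell}$, the supremum over $\mathbf{g}\in\mathbf{H}^{\frac12-r}(\ell)$. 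Two facts make this pairing computable: first, $\mathbf{w}$ satisfies the strong form \eqref{s2.17}, so $div\Sigma=\mu\Delta\mathbf{w}+(\mu+\lambda)\nabla(div\mathbf{w})=-\mathbf{f}$ holds in $\mathbf{L}^2(\kappa)$, which is legitimate because $\mathbf{w}\in\mathbf{W}^{2,p}(\kappa)$; second, the edge lifting supplied by Lemma \ref{lem3.2}.

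For each $\mathbf{g}$ I would apply Lemma \ref{lem3.2} componentwise to produce a lifting $\mathbf{v}_g\in\mathbf{H}^{1-r}(\kappa)$ with $\mathbf{v}_g|_\ell=\mathbf{g}$, $\mathbf{v}_g|_{\partial\kappa\setminus\ell}=0$, and
\begin{equation*}
|\mathbf{v}_g|_{1-r,\kappa}+h_\kappa^{r-1}\|\mathbf{v}_g\|_{0,\kappa}\le C h_\kappa^{-\delta}\|\mathbf{g}\|_{\frac12-r,\partial\kappa}.
\end{equation*}
Because $\mathbf{v}_g$ vanishes on $\partial\kappa\setminus\ell$, the generalized Green formula for a tensor $\Sigma\in\underset{\sim}{\mathbf{H}}^r(\kappa)$ with $div\Sigma\in\mathbf{L}^2(\kappa)$ reduces the boundary integral to $\ell$ and gives
\begin{equation*}
\langle\Sigma\gamma,\mathbf{g}\rangle_\ell=\int_\kappa\Sigma:\nabla\mathbf{v}_g\,d\mathbf{x}+\int_\kappa(div\Sigma)\cdot\mathbf{v}_g\,d\mathbf{x}=\int_\kappa\Sigma:\nabla\mathbf{v}_g\,d\mathbf{x}-\int_\kappa\mathbf{f}\cdot\mathbf{v}_g\,d\mathbf{x}.
\end{equation*}

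I would then estimate the two volume integrals separately. For the force term, Cauchy--Schwarz and the $\mathbf{L}^2$-part of the lifting bound yield $|\int_\kappa\mathbf{f}\cdot\mathbf{v}_g|\le\|\mathbf{f}\|_{\mathbf{L}^2(\kappa)}\|\mathbf{v}_g\|_{0,\kappa}\le C h_\kappa^{-\delta}h_\kappa^{1-r}\|\mathbf{f}\|_{\mathbf{L}^2(\kappa)}\|\mathbf{g}\|_{\frac12-r,\partial\kappa}$; dividing by $\|\mathbf{g}\|_{\frac12-r,\ell}$ (comparable to $\|\mathbf{g}\|_{\frac12-r,\partial\kappa}$, since extension by zero is bounded for exponent $<\frac12$) and using $h_\ell\sim h_\kappa$ produces precisely the term $C h_\kappa^{-\delta}h_\ell^{1-r}\|\mathbf{f}\|_{\mathbf{L}^2(\kappa)}$. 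For the stress term I would pair $\Sigma\in\underset{\sim}{\mathbf{H}}^r(\kappa)$ against $\nabla\mathbf{v}_g\in\underset{\sim}{\mathbf{H}}^{-r}(\kappa)$ and invoke the bound $|\int_\kappa\Sigma:\nabla\mathbf{v}_g|\le C\|\Sigma\|_{\underset{\sim}{\mathbf{H}}^r(\kappa)}|\mathbf{v}_g|_{1-r,\kappa}$, then apply the seminorm part of the lifting estimate and the splitting $\|\Sigma\|_{\underset{\sim}{\mathbf{H}}^r(\kappa)}\le\mu\|\nabla\mathbf{w}\|_{\underset{\sim}{\mathbf{H}}^r(\kappa)}+(\lambda+\mu)\|div\mathbf{w}\|_{r,\kappa}$ (the constant factor $I$ does not affect regularity). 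Since the constant in Lemma \ref{lem3.2} is independent of $\lambda$ and the $\lambda$-dependence here is fully explicit, the resulting constant is $\lambda$-independent, and collecting the two contributions gives \eqref{s3.5}.

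The hard part will be the stress-term duality $|\int_\kappa\Sigma:\nabla\mathbf{v}_g|\le C\|\Sigma\|_{\underset{\sim}{\mathbf{H}}^r(\kappa)}|\mathbf{v}_g|_{1-r,\kappa}$ with $C$ independent of $h_\kappa$ and $\lambda$. This is exactly where the hypothesis $0<r<\frac12$ is indispensable: for such $r$ the spaces $\mathbf{H}^{r}(\kappa)$ and $\mathbf{H}^{-r}(\kappa)$ do not distinguish boundary values, so although $\mathbf{v}_g$ fails to vanish on all of $\partial\kappa$ and hence $\nabla\mathbf{v}_g$ lies only in $\underset{\sim}{\mathbf{H}}^{-r}(\kappa)$, it is still a bounded functional on $\underset{\sim}{\mathbf{H}}^r(\kappa)$ and the pairing is meaningful. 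Making this quantitative while keeping the constant free of $h_\kappa$ demands a careful scaling argument on the reference element for the (non-homogeneous) fractional norms, and one must likewise justify the generalized Green identity for a tensor of only $\underset{\sim}{\mathbf{H}}^r$ regularity whose divergence is merely square-integrable; these points, though essentially standard, constitute the technical core of the lemma.
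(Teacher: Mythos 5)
Your proposal is correct and takes essentially the same route as the paper's own proof: both characterize the edge norm by duality against $\mathbf{g}\in\mathbf{H}^{\frac12-r}(\ell)$, insert the Lemma 3.2 lifting $\mathbf{v}_{\mathbf{g}}$ vanishing on $\partial\kappa\setminus\ell$, use an element-level Green identity together with the strong equation \eqref{s2.17} to trade the second-derivative volume terms for $\mathbf{f}$, and then bound the remaining terms by Cauchy--Schwarz and the $\mathbf{H}^{r}$--$\mathbf{H}^{-r}$ pairing, which is legitimate precisely because $\mathbf{H}^{r}(\kappa)=\mathbf{H}_{0}^{r}(\kappa)$ for $0<r<\frac12$. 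The only difference is notational---you package the two contributions into the single tensor $\Sigma=\mu\nabla\mathbf{w}+(\lambda+\mu)(div\,\mathbf{w})I$ with $div\,\Sigma=-\mathbf{f}$ and one Green identity, whereas the paper proves the two formulas \eqref{s3.6} and \eqref{s3.7} separately---and the technical points you flag as the core (the $h_{\kappa}$-uniform control of $\|\nabla\mathbf{v}_{\mathbf{g}}\|_{\underset{\sim}{\mathbf{H}}^{-r}(\kappa)}$ by the lifting estimate, and the density justification of the Green identity for low-regularity fields) are exactly the ones the paper addresses via its duality-pair interpretations and density argument.
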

\begin{proof}
We use the proof method of Corollary 3.3 on page 1384
in \cite{bernardi} or Lemma 2.1 in \cite{caiz} to prove \eqref{s3.5}.

First, we shall prove that the following Green's formula
\begin{equation}\label{s3.6}
\int\limits_{\partial\kappa}(\nabla\mathbf{w}\gamma)\cdot\mathbf{v}ds
=\int\limits_{\kappa}\Delta\mathbf{w}\cdot\mathbf{v}d\mathbf{x}+\int\limits_{\kappa}\nabla\mathbf{w}:\nabla \mathbf{v}d\mathbf{x},~~~\forall \kappa\in\pi_{h}
\end{equation}
holds for all $\mathbf{v}\in \mathbf{H}^{1-r}(\kappa)$
with $0<r<\frac{1}{2}$.

Let $\mathbf{H}^{-r}(\kappa)$ be the dual of $\mathbf{H}_{0}^{r}(\kappa)$ which is the closure of $\mathbf{C}_{0}^{\infty}(\kappa)$ in $\mathbf{H}^{r}(\kappa)$ norm.
Since  $\mathbf{H}^{r}(\kappa)$ is the same space as $\mathbf{H}_{0}^{r}(\kappa)$ for $r\in (0,\frac{1}{2})$ (see, e.g., Theorem 1.4.2.4 in \cite{grisvard}) and $\nabla\mathbf{v}$ is in $\mathbf{H}^{-r}(\kappa)$, the term $\int_{\kappa}\nabla\mathbf{w}:\nabla\mathbf{v}dx$ in (\ref{s3.6}) then can be viewed as a duality pair between $\mathbf{H}^{r}(\kappa)$ and $\mathbf{H}^{-r}(\kappa)$.
By the Sobolev imbedding theorem we get
$\mathbf{H}^{1-r}(\kappa)\hookrightarrow \mathbf{L}^{\frac{2}{r}}(\kappa)$ continuously,
thus
the term $\int_{\kappa}\Delta\mathbf{w}\cdot\mathbf{v}d\mathbf{x}$ in (\ref{s3.6}) can be viewed as a duality pair between $\mathbf{L}^{p}(\kappa)$ and $\mathbf{L}^{\frac{2}{r}}(\kappa)$. Since  $\mathbf{w}\in \mathbf{W}^{2,p}(\Omega)$ $(1<p<2)$ is the solution of (\ref{s2.4}), there is $\tau>0$ such that
$\mathbf{w}\in\mathbf{W}^{2,p+\tau}(\Omega)$.
By the trace theorem, there is $\tau_{1}>0$ which can be arbitrarily close to $0$
such that $ \mathbf{H}^{1-r}(\kappa)\hookrightarrow\mathbf{L}^{\frac{1}{r}-\tau_{1}}(\partial\kappa)$ continuously,
and there is $\tau_{2}>0$ such that
$\nabla \mathbf{w}\gamma|_{\partial\kappa}\in \mathbf{L}^{\frac{1}{1-r}+\tau_{2}}(\partial\kappa)$,
thus $(\nabla\mathbf{w}\gamma)\cdot\mathbf{v}|_{\partial\kappa}\in L^{1}(\partial\kappa)$. To sum up, all terms in (\ref{s3.6}) make sense.

Then, the validity of (\ref{s3.6}) follows from the standard density argument ($\mathbf{C}^{\infty}(\overline{\kappa})$ is dense in $\mathbf{H}^{1-r}(\kappa)$ ) and the fact that (\ref{s3.6}) holds for $\mathbf{C}^{\infty}(\overline{\kappa})$ function $\mathbf{v}$.

Using the same argument as above,  we can deduce  that for all $\mathbf{v}\in \mathbf{H}^{1-r}(\kappa)$
with $0<r<\frac{1}{2}$
\begin{equation}\label{s3.7}
\int\limits_{\partial\kappa}div\mathbf{w} \gamma\cdot\mathbf{v}ds=\int_{\kappa }\mathbf{\nabla}(div\mathbf{w})\cdot\mathbf{v}d\mathbf{x}
+\int_{\kappa }(div\mathbf{w})(div\mathbf{v})d\mathbf{x}.
\end{equation}
By the trace theorem, $\mathbf{v}|_{\partial\kappa}$ is in $\mathbf{H}^{\frac{1}{2}-r}(\partial\kappa)$.
Since, for each  edge $\ell\subset\partial\kappa$, the trivial extension of functions in $\mathbf{H}^{\frac{1}{2}-r}(\ell)$ by zero to all of $\partial\kappa$
belongs to $\mathbf{H}^{\frac{1}{2}-r}(\partial\kappa)$ (see, e.g., Theorem 1.5.2.3 in \cite{grisvard}),
this interpretation enables us to define the duality pair on each edge $\ell$ of $\partial\kappa$
$$\mu\int\limits_{\ell}(\nabla\mathbf{w}\gamma)\cdot\mathbf{v}ds
+(\lambda+\mu)\int\limits_{\ell}div\mathbf{w} \gamma\cdot\mathbf{v}ds
:=<\mu\nabla\mathbf{w}\gamma+(\lambda+\mu)div\mathbf{w} \gamma,\mathbf{v}>_{\ell},$$
where $(\mu\nabla\mathbf{w}\gamma+(\lambda+\mu)div\mathbf{w} \gamma)|_{\ell}\in \mathbf{H}^{r-\frac{1}{2}}(\ell)$ and $\mathbf{v}|_{\ell}\in \mathbf{H}^{\frac{1}{2}-r}(\ell)$.\\
\indent For any $\mathbf{g}\in \mathbf{H}^{\frac{1}{2}-r}(\ell)$,
from Lemma 3.2 we know
that there exists a lifting $\mathbf{v}_{\mathbf{g}}$ of $\mathbf{g}$ such that $\mathbf{v}_{\mathbf{g}}\in \mathbf{H}^{1-r}(\kappa)$, $\mathbf{v}_{\mathbf{g}}|_{\ell}=\mathbf{g}$, $\mathbf{v}_{\mathbf{g}}|_{\partial\kappa\setminus\ell}=0$, and
\begin{eqnarray*}
\|\nabla\mathbf{v}_{\mathbf{g}}\|_{\underset{\sim}{\mathbf{H}}^{-r}(\kappa)}+h_{\kappa}^{r-1}\| \mathbf{v}_{\mathbf{g}}\|_{\mathbf{L}^2(\kappa)}\leq C h_{\kappa}^{-\delta}\|\mathbf{g}\|_{\mathbf{H}^{\frac{1}{2}-r}(\ell)},
\end{eqnarray*}
where $C$ depends on the constant $\nu$ in regular triangulation 
but is independent of $\lambda$.

From Green's formula (\ref{s3.6}) and (\ref{s3.7})
and the definition of the dual norm we deduce
\begin{align}\label{s3.8}
&\int\limits_{\ell}
\mu(\nabla\mathbf{w}\gamma)\cdot\mathbf{g}
+(\lambda+\mu)div\mathbf{w} \gamma\cdot\mathbf{g}ds
=\int\limits_{\partial\kappa}\mu(\nabla\mathbf{w}\gamma)\cdot\mathbf{v}_{\mathbf{g}}+(\lambda+\mu)div\mathbf{w} \gamma\cdot\mathbf{v}_{g}ds\nonumber\\
&=\mu(\int\limits_{\kappa}\Delta\mathbf{w}\cdot\mathbf{v}_{g}d\mathbf{x}+\int\limits_{\kappa}\nabla\mathbf{w}:\nabla \mathbf{v}_{\mathbf{g}}d\mathbf{x})
+(\lambda+\mu)(\int_{\kappa }\mathbf{\nabla}(div\mathbf{w})\cdot\mathbf{v}_{g}d\mathbf{x}
+\int_{\kappa }(div\mathbf{w})(div\mathbf{v}_{g})d\mathbf{x})
\nonumber\\
&=\int\limits_{\kappa}\mathbf{f}\cdot\mathbf{v}_{g}d\mathbf{x}+\mu\int\limits_{\kappa}\nabla\mathbf{w}:\nabla \mathbf{v}_{\mathbf{g}}d\mathbf{x}
+(\lambda+\mu)\int_{\kappa }(div\mathbf{w})(div\mathbf{v}_{g})d\mathbf{x}.
\nonumber\\
&\leq C(\|\mathbf{f}\|_{\mathbf{L}^{2}(\kappa)}\|\mathbf{v}_{g}\|_{\mathbf{L}^{2}(\kappa)}+\mu\|\nabla\mathbf{w}\|_{\underset{\sim}{\mathbf{H}}^r(\kappa)}\| \nabla\mathbf{v}_{\mathbf{g}}\|_{\underset{\sim}{\mathbf{H}}^{-r}(\kappa)}
+(\lambda+\mu)\|div\mathbf{w}\|_{r,\kappa}\|div\mathbf{v}_{\mathbf{g}}\|_{-r,\kappa})\nonumber\\
&\leq C{\color{blue}h_{\kappa}^{-\delta}}( h_{\ell}^{1-r}  \|\mathbf{f}\|_{\mathbf{L}^{2}(\kappa)}
+\mu\|\nabla\mathbf{w}\|_{\underset{\sim}{\mathbf{H}}^r(\kappa)}
+(\lambda+\mu)\|div\mathbf{w}\|_{r,\kappa})
     \|\mathbf{g}\|_{\mathbf{H}^{\frac{1}{2}-r}(\ell)},
\end{align}
by the definition of the dual norm we have
\begin{equation*}
\|\mu\nabla\mathbf{w}\gamma
+(\lambda+\mu)div\mathbf{w} \gamma\|_{\mathbf{H}^{r-\frac{1}{2}}(\ell)}
=\sup\limits_{\mathbf{g}\in \mathbf{H}^{\frac{1}{2}-r}(\ell)}\frac{|\int\limits_{\ell}
\mu(\nabla\mathbf{w}\gamma)\cdot\mathbf{g}
+(\lambda+\mu)div\mathbf{w} \gamma\cdot\mathbf{g}ds|}
{\|\mathbf{g}\|_{\mathbf{H}^{\frac{1}{2}-r}(\ell)}}.
\end{equation*}
Combining the above two relationships we obtain \eqref{s3.5}.
\end{proof}

Based on the standard argument (see, e.g., \cite{brenner}), the following consistency error estimate can be proved.
\begin{theorem}\label{thm3.1}
Let $\mathbf{w}\in \mathbf{H}^{1+s}(\Omega)$ be the solution of \eqref{s2.4} and suppose that
$\mathbf{R}(\Omega)$ holds, then
\begin{equation}\label{s3.9}
|D_{h}(\mathbf{w},\mathbf{v})|\leq C h^{s}\|\mathbf{f}\|_{\mathbf{L}^{2}(\Omega)}\|\mathbf{v}\|_{h},~~~\forall \mathbf{v}\in
\mathbf{H}(h).
\end{equation}
\end{theorem}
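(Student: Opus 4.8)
The plan is to convert $D_{h}(\mathbf{w},\mathbf{v})$ into a sum of edge integrals of the natural traction against the jumps $[[\mathbf{v}]]$, to estimate each edge term by the negative-order duality pairing supplied by Lemma~\ref{lem3.3}, and to let the positive power of $h_{\kappa}$ gained from the Crouzeix--Raviart zero-mean condition cancel the factor $h_{\kappa}^{-\delta}$ there; the regularity hypothesis $\mathbf{R}(\Omega)$ is then spent to make the resulting constant independent of $\lambda$.

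First I would integrate by parts element by element. Applying the Green's formulas \eqref{s3.6} and \eqref{s3.7} on each $\kappa\in\pi_{h}$ to $a_{h}(\mathbf{w},\mathbf{v})$ and inserting the strong form \eqref{s2.17}, the interior volume integrals collapse to $\int_{\kappa}\mathbf{f}\cdot\mathbf{v}\,d\mathbf{x}$, whose sum over $\pi_{h}$ is precisely $b(\mathbf{f},\mathbf{v})$. This leaves only boundary contributions:
\[
D_{h}(\mathbf{w},\mathbf{v})=\sum_{\kappa\in\pi_{h}}\int_{\partial\kappa}\bigl(\mu\nabla\mathbf{w}\,\gamma+(\mu+\lambda)\,div\mathbf{w}\,\gamma\bigr)\cdot\mathbf{v}\,ds.
\]
Since $\mathbf{w}\in\mathbf{H}_{0}^{1}(\Omega)$ solves \eqref{s2.4}, its natural traction $\mu\nabla\mathbf{w}\,\gamma+(\mu+\lambda)\,div\mathbf{w}\,\gamma$ is single valued across each interior edge; using $\gamma_{\kappa_{1}}=-\gamma_{\kappa_{2}}$ the two contributions of an interior edge combine into an integral of this common traction against $[[\mathbf{v}]]$, while on a boundary edge the single contribution is of the same form with $[[\mathbf{v}]]=\mathbf{v}|_{\ell}$. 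Thus
\[
D_{h}(\mathbf{w},\mathbf{v})=\sum_{\ell\in\mathcal{E}_{h}}\int_{\ell}\bigl(\mu\nabla\mathbf{w}\,\gamma+(\mu+\lambda)\,div\mathbf{w}\,\gamma\bigr)\cdot[[\mathbf{v}]]\,ds.
\]

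Next I would estimate each edge term. The decisive structural fact is that for every $\mathbf{v}\in\mathbf{H}(h)$ the definition of $S_{0}^{h}(\Omega)$ forces $\int_{\ell}[[\mathbf{v}]]\,ds=0$ on every $\ell\in\mathcal{E}_{h}$ (the conforming part of $\mathbf{v}$ contributes no jump, and on boundary edges the mean of $\mathbf{v}$ vanishes). I therefore interpret $\int_{\ell}(\cdots)\cdot[[\mathbf{v}]]\,ds$ as the duality pairing between $\mathbf{H}^{r-\frac{1}{2}}(\ell)$ and $\mathbf{H}^{\frac{1}{2}-r}(\ell)$ set up in the proof of Lemma~\ref{lem3.3}, and bound it by $\|\mu\nabla\mathbf{w}\,\gamma+(\lambda+\mu)\,div\mathbf{w}\,\gamma\|_{\mathbf{H}^{r-1/2}(\ell)}\,\|[[\mathbf{v}]]\|_{\mathbf{H}^{1/2-r}(\ell)}$, the first factor being controlled by \eqref{s3.5}. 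For the second factor I would exploit the zero-mean property together with the trace inequalities of Lemma~\ref{lem3.1} and a scaling argument: subtracting element means, mapping to the reference element, and scaling back produces $\|[[\mathbf{v}]]\|_{\mathbf{H}^{1/2-r}(\ell)}\le C h_{\kappa}^{\,r}\,(|\mathbf{v}|_{\mathbf{H}^{1}(\kappa_{1})}+|\mathbf{v}|_{\mathbf{H}^{1}(\kappa_{2})})$ with no $L^{2}$-mass term surviving. The point of the zero mean is exactly that it suppresses the constant part, so a strictly positive power of $h_{\kappa}$ is gained; this offsets the $h_{\kappa}^{-\delta}=h_{\kappa}^{-(1/2-r)}$ of Lemma~\ref{lem3.3} and, together with the companion factor $h_{\ell}^{1-r}$ on the $\mathbf{f}$-term, yields the global rate $h^{s}$ (taking $r=s$ when $s<\tfrac12$, and $r\uparrow\tfrac12$ when $s\ge\tfrac12$, as permitted by $\mathbf{R}(\Omega)$).

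Finally I would sum over $\mathcal{E}_{h}$: a Cauchy--Schwarz inequality separates the $\mathbf{w}$-factors from the $\mathbf{v}$-factors, the latter reassembling into $|\mathbf{v}|_{1,h}\le C\|\mathbf{v}\|_{h}$, and the former into $\|\mathbf{f}\|_{\mathbf{L}^{2}(\Omega)}$, $\mu\|\nabla\mathbf{w}\|_{\underset{\sim}{\mathbf{H}}^{s}(\Omega)}$ and $(\lambda+\mu)\|div\mathbf{w}\|_{s,\Omega}$. I expect the main obstacle to be precisely the $\lambda$-uniform bookkeeping at this stage: the term $(\mu+\lambda)\,div\mathbf{w}$ in the traction is the one that threatens locking, and it is tamed only by spending the full strength of the sharp divergence estimate $\lambda\|div\mathbf{w}\|_{s,\Omega}\le C_{\Omega}\|\mathbf{f}\|_{\mathbf{L}^{2}(\Omega)}$ from $\mathbf{R}(\Omega)$, which (with $\mu\in[\mu_{0},\mu_{1}]$) collapses the whole $\mathbf{w}$-side to $C\|\mathbf{f}\|_{\mathbf{L}^{2}(\Omega)}$ with a $\lambda$-free constant. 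A secondary delicacy is the very legitimacy of the edge representation in the low-regularity range $s<\tfrac12$, where the traction lives only in the negative-order space $\mathbf{H}^{r-1/2}(\ell)$ and the edge integrals must be read as the duality pairings introduced in Lemma~\ref{lem3.3} rather than as genuine integrals.
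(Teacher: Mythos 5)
Your strategy for the hard case --- element-wise integration by parts to an edge--jump representation, reading each edge term as the $\mathbf{H}^{r-\frac12}(\ell)$--$\mathbf{H}^{\frac12-r}(\ell)$ pairing of Lemma~\ref{lem3.3}, and using the Crouzeix--Raviart mean-zero property through $[[\mathbf{v}]]-P_{\ell}[[\mathbf{v}]]$ to gain a positive power of $h$ --- is exactly the paper's proof for $s<\frac12$. But your exponent bookkeeping is wrong. With auxiliary exponent $r$, Lemma~\ref{lem3.3} costs $h_{\kappa}^{-\delta}=h_{\kappa}^{-(\frac12-r)}$ while the jump estimate gains only $h_{\kappa}^{r}$, so each edge term carries the net power $h^{2r-\frac12}$, not $h^{r}$. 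Your choice $r=s$ therefore produces $|D_{h}(\mathbf{w},\mathbf{v})|\leq C h^{2s-\frac12}\|\mathbf{f}\|_{\mathbf{L}^{2}(\Omega)}\|\mathbf{v}\|_{h}$, which is strictly weaker than \eqref{s3.9} for every $s<\frac12$ and is vacuous (no decay at all) once $s\leq\frac14$. The paper instead takes $r=s+\frac{0.5-s}{2}$, i.e.\ the exponent with $2r-\frac12=s$; this choice of $r>s$ is legitimate precisely because $\mathbf{R}(\Omega)$ (equivalently Theorem~\ref{thm2.1}) provides the $\lambda$-uniform bound $\mu\|\nabla\mathbf{w}\|_{\underset{\sim}{\mathbf{H}}^{r}(\kappa)}+(\mu+\lambda)\|div\mathbf{w}\|_{r,\kappa}\leq C\|\mathbf{f}\|_{\mathbf{L}^{2}(\Omega)}$ for every exponent below $\frac12$, not merely for $s$. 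The repair is one line, but as written your computation does not prove the stated rate.

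The second gap is your proposal to treat $s\geq\frac12$ by the same duality route with $r\uparrow\frac12$. This cannot work: the right-hand side of \eqref{s3.5} involves the full norms $\mu\|\nabla\mathbf{w}\|_{\underset{\sim}{\mathbf{H}}^{r}(\kappa)}+(\lambda+\mu)\|div\mathbf{w}\|_{r,\kappa}$ with nothing subtracted on the traction side, so after summation these terms are $O(1)$ no matter how smooth $\mathbf{w}$ is, and the global rate of the duality argument is $h^{2r-\frac12}<h^{\frac12}$ for any admissible $r<\frac12$ (moreover the constants hidden in Lemmas~\ref{lem3.2}--\ref{lem3.3}, e.g.\ the identification of $\mathbf{H}^{r}(\kappa)$ with $\mathbf{H}_{0}^{r}(\kappa)$, degenerate as $r\uparrow\frac12$). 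Hence for $s\in(\frac12,1]$ --- the convex-domain situation, where \eqref{s3.9} promises up to $O(h)$ --- your scheme tops out strictly below $h^{1/2}$. The paper avoids this by a genuinely different argument in that range: it inserts $P_{\ell}$ on the traction side, as in \eqref{s3.13a} and \eqref{s3.14a}, applies Cauchy--Schwarz in $\mathbf{L}^{2}(\ell)$, and uses Lemma~\ref{lem3.1} together with $L^{2}$-projection error estimates to obtain the factor $h^{s-\frac12}\cdot h^{\frac12}=h^{s}$; the negative-norm machinery of Lemma~\ref{lem3.3} is reserved for $s<\frac12$, where the traction fails to lie in $\mathbf{L}^{2}(\ell)$.
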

\begin{proof}
 Using integration by parts, we get
\begin{equation}\label{s3.10}
\int _{\Omega }\mathbf{\nabla}\mathbf{w}:\mathbf{\nabla}_h\mathbf{v}d\mathbf{x}+\int _{\Omega }\triangle\mathbf{w}\cdot\mathbf{v}d\mathbf{x}
=\sum\limits_{\ell\in\mathcal{E}_{h}}\int _{\ell}\frac{\partial\mathbf{w}}{\partial\gamma}\cdot[[\mathbf{v}]]ds,
\end{equation}
\begin{align}
\int_{\Omega }(div\mathbf{w})(div_h\mathbf{v})d\mathbf{x}+\int_{\Omega }\mathbf{\nabla}(div\mathbf{w})\cdot\mathbf{v}d\mathbf{x}
=\sum\limits_{\ell\in\mathcal{E}_{h}}\int\limits_{\ell}div\mathbf{w} \gamma\cdot[[\mathbf{v}]]ds.\label{s3.11}
\end{align}
Combining \eqref{s2.17}, \eqref{s3.1}, \eqref{s3.10} and \eqref{s3.11}, we deduce
\begin{align}\label{s3.12}
&|a_{h}(\mathbf{w},\mathbf{v})-\int_{\Omega}\mathbf{f}\cdot\mathbf{v}d\mathbf{x}|=|a_{h}(\mathbf{w},\mathbf{v})
-\int_{\Omega}(-\mu\Delta\mathbf{w}-(\mu+\lambda)\nabla div\mathbf{w})\cdot\mathbf{v}d\mathbf{x}|\nonumber\\
&=\mu
\sum\limits_{\ell\in\mathcal{E}_{h}}\int _{\ell}\frac{\partial\mathbf{w}}{\partial\gamma}\cdot[[\mathbf{v}]]ds+
(\mu+\lambda)\sum\limits_{\ell\in\mathcal{E}_{h}}\int\limits_{\ell}div\mathbf{w} \gamma\cdot[[\mathbf{v}]]ds.
\end{align}
For $\ell\in\mathcal{E}_{h}$, $\kappa\in\pi_{h}$, define
\begin{equation*}
P_{\ell}\mathbf{f}=\frac{1}{|\ell|}\int\limits_{\ell}\mathbf{f}ds,
~~~P_{\kappa}\mathbf{f}=\frac{1}{|\kappa|}\int\limits_{\kappa}\mathbf{f}d\mathbf{x}.\\
\end{equation*}
Suppose that $\kappa_{1},\kappa_{2}\in \pi_{h}$ such that
$\kappa_{1}\cap \kappa_{2}=\ell$. Since $[[\mathbf{v}]]$ is a linear function vanishing
at the midpoint of $\ell$, we have
\begin{subequations}
\begin{align}
&|\int_{\ell}\frac{\partial \mathbf{w}}{\partial \gamma}\cdot[[\mathbf{v}]]ds|
=|\int_{\ell}(\frac{\partial \mathbf{w}}{\partial
\gamma}-P_{\ell}(\frac{\partial \mathbf{w}}{\partial \gamma}))\cdot[[\mathbf{v}]]ds| \nonumber\\
& \quad \quad=|\int_{\ell}(\frac{\partial\mathbf{w}}{\partial \gamma}-P_{\ell}(\frac{\partial \mathbf{w}}{\partial
\gamma}))\cdot([[\mathbf{v}]]-P_{\ell}[[\mathbf{v}]])ds|\label{s3.13a}\\
&\quad \quad = |\int_{\ell}\frac{\partial \mathbf{w}}{\partial \gamma}\cdot([[\mathbf{v}]]-P_{\ell}[[\mathbf{v}]])ds|,\label{s3.13b}
\end{align}
\end{subequations}
\begin{subequations}
\begin{align}
&|\int_{\ell}div\mathbf{w}\gamma\cdot[[\mathbf{v}]]ds|
=|\int_{\ell}(div\mathbf{w}\gamma-P_{\ell}(div\mathbf{w}\gamma))\cdot[[\mathbf{v}]]ds| \nonumber\\
& \quad \quad=|\int_{\ell}(div\mathbf{w}\gamma-P_{\ell}(div\mathbf{w}\gamma)\cdot([[\mathbf{v}]]-P_{\ell}[[\mathbf{v}]])ds|\label{s3.14a}\\
&\quad \quad = |\int_{\ell}div\mathbf{w}\gamma\cdot([[\mathbf{v}]]-P_{\ell}[[\mathbf{v}]])ds|.\label{s3.14b}
\end{align}
\end{subequations}

Then, when $s\in [ \frac{1}{2},1]$, using (\ref{s3.13a}) and Schwarz inequality we deduce
\begin{align}
|\int_{\ell}\frac{\partial \mathbf{w}}{\partial \gamma}\cdot[[\mathbf{v}]]ds|
&\leq \sum\limits_{i=1,2}\|\nabla \mathbf{w} \gamma-
P_{\ell}(\nabla \mathbf{w}
\gamma)\|_{\mathbf{L}^{2}(\ell)}\|\mathbf{v}|_{\kappa_{i}}-P_{\ell}(\mathbf{v}|_{\kappa_{i}})\|_{\mathbf{L}^{2}(\ell)} \nonumber\\
& \leq\sum\limits_{i=1,2}\|\nabla (\mathbf{w}-\mathbf{I}_{h}\mathbf{w}) \gamma\|_{\mathbf{L}^{2}(\ell)}\|\mathbf{v}|_{\kappa_{i}}-P_{\kappa_{i}}(\mathbf{v}|_{\kappa_{i}})\|_{\mathbf{L}^{2}(\ell)},\label{s3.15}
\end{align}
and by Lemma 3.1 and the standard error estimates for
$L^{2}$-projection we get
\begin{align}
\|\nabla (\mathbf{w}-\mathbf{I}_{h}\mathbf{w}) \gamma\|_{\mathbf{L}^{2}(\ell)}&\leq C h^{s-\frac{1}{2}}||\mathbf{w}||_{\mathbf{H}^{1+s}(\kappa_{i})},\nonumber\\
\|\mathbf{v}|_{\kappa_{i}}-P_{\kappa_{i}}(\mathbf{v}|_{\kappa_{i}})\|_{\mathbf{L}^{2}(\ell)}&\leq C h^{\frac{1}{2}}||\mathbf{v}||_{\mathbf{H}^{1}(\kappa_{i})}.\nonumber
\end{align}
Substituting the above two estimates into \eqref{s3.15}, we obtain
\begin{equation}\label{s3.16}
|\int_{\ell}\frac{\partial \mathbf{w}}{\partial \gamma}\cdot[[\mathbf{v}]]ds| \leq C\sum\limits_{i=1,2}h^{s}\|\mathbf{w}\|_{\mathbf{H}^{1+s}(\kappa_{i})}\|\mathbf{v}\|_{\mathbf{H}^{1}(\kappa_{i})}.
\end{equation}
Using the same argument as above,  we can derive that for $s\in [\frac{1}{2},1]$,
\begin{align}
|\int\limits_{\ell}div\mathbf{w} \gamma\cdot[[\mathbf{v}]]ds|\leq C \sum\limits_{i=1,2}h^{s}|div\mathbf{w}|_{s,\kappa_{i}}\|\mathbf{\nabla}_h\mathbf{v}\|_{\underset{\sim}{\mathbf{L}}^2(\kappa_{i})}.\label{s3.17}
\end{align}
Combining \eqref{s3.12}, \eqref{s3.16}, \eqref{s3.17} and \eqref{s2.8}, we deduce
\begin{align*}
&|a_{h}(\mathbf{w},\mathbf{v})-\int_{\Omega}\mathbf{f}\cdot\mathbf{v}d\mathbf{x}|
\leq  C h^{s}\|\mathbf{\nabla}_h\mathbf{v}\|_{\underset{\sim}{\mathbf{L}}^2(\Omega)}\{\mu|\mathbf{w}|_{\mathbf{H}^{1+s}(\Omega)}+(\mu+\lambda)|div\mathbf{w}|_{s,\Omega}\}
\nonumber\\
&\quad\quad\quad \leq C h^{s}\|\mathbf{f}\|_{\mathbf{L}^{2}(\Omega)}\|\mathbf{v}\|_{h},~~~\forall \mathbf{v}\in\mathbf{H}(h).
\end{align*}
Then  \eqref{s3.9} is valid for $s\in [\frac{1}{2},1]$.

When $s<\frac{1}{2}$, from $\mathbf{R}(\Omega)$ we also have $\mathbf{w}\in \mathbf{H}^{1+r}(\Omega)$ by taking $r=s+\frac{0.5-s}{2}$.
Thus, from (\ref{s3.13b}), (\ref{s3.14b}) and Lemma 3.3 we deduce that
\begin{align}\label{s3.18}
&|\int_{\ell}(\mu\frac{\partial \mathbf{w}}{\partial \gamma}+(\mu+\lambda)div \mathbf{w}\gamma)\cdot[[\mathbf{v}]]ds|=
|\int_{\ell}(\mu\frac{\partial \mathbf{w}}{\partial \gamma}+(\mu+\lambda)div \mathbf{w}\gamma)\cdot([[\mathbf{v}]]-P_{\ell}[[\mathbf{v}]])ds|\nonumber\\
&\quad\quad\leq C h_{\kappa}^{-\delta}( h_{\ell}^{1-r}  \|\mathbf{f}\|_{\mathbf{L}^{2}(\kappa)}
+\mu\|\mathbf{w}\|_{\mathbf{H}^{1+r}(\kappa)}
+(\mu+\lambda)\|div\mathbf{w}\|_{r,\kappa})
     \|[[\mathbf{v}]]-P_{\ell}[[\mathbf{v}]]\|_{\mathbf{H}^{\frac{1}{2}-r}(\ell)}.
\end{align}
By using inverse estimate, Lemma 3.1 and the error estimate of $L^{2}$-projection, we derive
\begin{equation*}
\|[[\mathbf{v}]]-P_{\ell}[[\mathbf{v}]]\|_{\mathbf{H}^{\frac{1}{2}-r}(\ell)}\leq C h_{\ell}^{r-\frac{1}{2}}\|[[\mathbf{v}]]-P_{\ell}[[\mathbf{v}]]\|_{\mathbf{L}^{2}(\ell)}
\leq C \sum\limits_{i=1,2}h_{\kappa_{i}}^{r}|\mathbf{v}|_{\mathbf{H}^{1}(\kappa_{i})}.
\end{equation*}
Substituting the above estimate into \eqref{s3.18}, we obtain
\begin{equation*}
|\int_{\ell}(\mu\frac{\partial \mathbf{w}}{\partial \gamma}+(\mu+\lambda)div \mathbf{w}\gamma)\cdot[[\mathbf{v}]]ds|
\leq C\sum\limits_{i=1,2}h_{\kappa_{i}}^{-\delta}( h_{\ell}^{1-r}  \|\mathbf{f}\|_{\mathbf{L}^{2}(\kappa)}
+\mu\|\mathbf{w}\|_{\mathbf{H}^{1+r}(\kappa)}
+(\mu+\lambda)\|div\mathbf{w}\|_{r,\kappa})h_{\kappa_{i}}^{r}|\mathbf{v}|_{\mathbf{H}^{1}(\kappa_{i})},
\end{equation*}
and substituting the above inequality into \eqref{s3.12} we get
\begin{align*}
&|a_{h}(\mathbf{w},\mathbf{v})-\int_{\Omega}\mathbf{f}\cdot\mathbf{v}d\mathbf{x}|\leq  C h^{r-\delta}\|\mathbf{\nabla}_h\mathbf{v}\|_{\underset{\sim}{\mathbf{L}}^2(\Omega)}\{h^{1-r}  \|\mathbf{f}\|_{\mathbf{L}^{2}(\Omega)}+\mu\|\mathbf{w}\|_{\mathbf{H}^{1+r}(\Omega)}+(\mu+\lambda)|div\mathbf{w}|_{r,\Omega}\}
\nonumber\\
&\quad\quad\quad \leq C h^{-\delta} h^{r}\|\mathbf{f}\|_{\mathbf{L}^{2}(\Omega)}\|\mathbf{v}\|_{h},~~~\forall \mathbf{v}\in\mathbf{H}(h).
\end{align*}
Noting that $-\delta+r=-\frac{1}{2}+r+r=s$, we get the desired result. The proof is completed.
\end{proof}

Now we can state the error estimates of C-R element approximation for \eqref{s2.2}.
\begin{theorem}\label{thm3.2}
Under the conditions of Theorem 3.1, it is valid that
\begin{align}
&\|\mathbf{w}-\mathbf{w}_{h}\|_{h}\leq C h^{s}\|\mathbf{f}\|_{\mathbf{L}^{2}(\Omega)},\label{s3.19}\\
&\|\mathbf{w}-\mathbf{w}_{h}\|_{\mathbf{L}^{2}(\Omega)}\leq C h^{2s}\|\mathbf{f}\|_{\mathbf{L}^{2}(\Omega)}.\label{s3.20}
\end{align}
\end{theorem}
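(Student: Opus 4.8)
The plan is to establish the energy estimate (\ref{s3.19}) by a Strang-type argument and then deduce the $\mathbf{L}^2$ estimate (\ref{s3.20}) by an Aubin--Nitsche duality argument, with Theorem \ref{thm3.1} supplying the control of the nonconforming consistency terms by a constant independent of $\mu,\lambda$. The starting point for both parts is the Galerkin-type relation obtained by subtracting (\ref{s3.3}) from (\ref{s3.4}): for every $\mathbf{v}_h\in\mathbf{S}_0^h(\Omega)$,
\begin{equation*}
a_h(\mathbf{w}-\mathbf{w}_h,\mathbf{v}_h)=D_h(\mathbf{w},\mathbf{v}_h).
\end{equation*}

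For (\ref{s3.19}), I would expand $\|\mathbf{w}_h-\mathbf{v}_h\|_h^2=a_h(\mathbf{w}_h-\mathbf{v}_h,\mathbf{w}_h-\mathbf{v}_h)$ using this relation, yielding
\begin{equation*}
\|\mathbf{w}_h-\mathbf{v}_h\|_h\leq \|\mathbf{w}-\mathbf{v}_h\|_h+\sup_{\mathbf{0}\neq\mathbf{z}_h\in\mathbf{S}_0^h(\Omega)}\frac{|D_h(\mathbf{w},\mathbf{z}_h)|}{\|\mathbf{z}_h\|_h}.
\end{equation*}
Taking $\mathbf{v}_h=\mathbf{I}_h\mathbf{w}$ and using the triangle inequality reduces matters to bounding the best-approximation error $\|\mathbf{w}-\mathbf{I}_h\mathbf{w}\|_h$ and the consistency supremum; the latter is at most $Ch^s\|\mathbf{f}\|_{\mathbf{L}^2(\Omega)}$ by Theorem \ref{thm3.1}. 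For the former I would exploit the commuting property of the C-R interpolation, namely that $div_h(\mathbf{I}_h\mathbf{w})$ is the elementwise $L^2$-projection of $div\mathbf{w}$ onto constants (a consequence of the definition of $\mathbf{I}_h$ and the divergence theorem on each $\kappa$); combined with the regularity estimate (\ref{s2.8})/$\mathbf{R}(\Omega)$ this gives $\|\mathbf{w}-\mathbf{I}_h\mathbf{w}\|_h\leq Ch^s\|\mathbf{f}\|_{\mathbf{L}^2(\Omega)}$ with $C$ independent of $\lambda$, establishing (\ref{s3.19}).

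For (\ref{s3.20}), set $\mathbf{e}=\mathbf{w}-\mathbf{w}_h$ and introduce the dual source problem: find $\boldsymbol{\phi}\in\mathbf{H}_0^1(\Omega)$ with $a(\boldsymbol{\phi},\mathbf{v})=b(\mathbf{e},\mathbf{v})$ for all $\mathbf{v}\in\mathbf{H}_0^1(\Omega)$; by $\mathbf{R}(\Omega)$ applied with data $\mathbf{e}\in\mathbf{L}^2(\Omega)$ one gets $\boldsymbol{\phi}\in\mathbf{H}^{1+s}(\Omega)$ with $\|\boldsymbol{\phi}\|_{\mathbf{H}^{1+s}(\Omega)}+\lambda\|div\boldsymbol{\phi}\|_{s,\Omega}\leq C\|\mathbf{e}\|_{\mathbf{L}^2(\Omega)}$. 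Writing $\|\mathbf{e}\|_{\mathbf{L}^2(\Omega)}^2=b(\mathbf{e},\mathbf{e})=a_h(\boldsymbol{\phi},\mathbf{e})-D_h(\boldsymbol{\phi},\mathbf{e})$, splitting $a_h(\boldsymbol{\phi},\mathbf{e})=a_h(\boldsymbol{\phi}-\mathbf{I}_h\boldsymbol{\phi},\mathbf{e})+a_h(\mathbf{e},\mathbf{I}_h\boldsymbol{\phi})$, and inserting the Galerkin relation $a_h(\mathbf{e},\mathbf{I}_h\boldsymbol{\phi})=D_h(\mathbf{w},\mathbf{I}_h\boldsymbol{\phi})$ produces
\begin{equation*}
\|\mathbf{e}\|_{\mathbf{L}^2(\Omega)}^2=a_h(\boldsymbol{\phi}-\mathbf{I}_h\boldsymbol{\phi},\mathbf{e})+D_h(\mathbf{w},\mathbf{I}_h\boldsymbol{\phi})-D_h(\boldsymbol{\phi},\mathbf{e}).
\end{equation*}
The first term is bounded by $\|\boldsymbol{\phi}-\mathbf{I}_h\boldsymbol{\phi}\|_h\,\|\mathbf{e}\|_h\leq Ch^{2s}\|\mathbf{e}\|_{\mathbf{L}^2(\Omega)}\|\mathbf{f}\|_{\mathbf{L}^2(\Omega)}$ using the dual interpolation bound and (\ref{s3.19}); the last is handled by Theorem \ref{thm3.1} for the dual problem, $|D_h(\boldsymbol{\phi},\mathbf{e})|\leq Ch^s\|\mathbf{e}\|_{\mathbf{L}^2(\Omega)}\|\mathbf{e}\|_h\leq Ch^{2s}\|\mathbf{e}\|_{\mathbf{L}^2(\Omega)}\|\mathbf{f}\|_{\mathbf{L}^2(\Omega)}$.

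The middle term carries the crux of the argument, and is the step I would be most careful about: since $D_h(\mathbf{w},\cdot)$ depends only on the interelement jumps and hence $D_h(\mathbf{w},\boldsymbol{\phi})=0$ for the conforming function $\boldsymbol{\phi}\in\mathbf{H}_0^1(\Omega)$, one may replace $\mathbf{I}_h\boldsymbol{\phi}$ by $\mathbf{I}_h\boldsymbol{\phi}-\boldsymbol{\phi}$ and apply Theorem \ref{thm3.1} to gain the extra power of $h$: $|D_h(\mathbf{w},\mathbf{I}_h\boldsymbol{\phi})|=|D_h(\mathbf{w},\mathbf{I}_h\boldsymbol{\phi}-\boldsymbol{\phi})|\leq Ch^s\|\mathbf{f}\|_{\mathbf{L}^2(\Omega)}\|\mathbf{I}_h\boldsymbol{\phi}-\boldsymbol{\phi}\|_h\leq Ch^{2s}\|\mathbf{f}\|_{\mathbf{L}^2(\Omega)}\|\mathbf{e}\|_{\mathbf{L}^2(\Omega)}$. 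Summing the three bounds and cancelling one factor $\|\mathbf{e}\|_{\mathbf{L}^2(\Omega)}$ gives (\ref{s3.20}). The main obstacle is precisely this double extraction of an $O(h^s)$ factor --- from the primal consistency term via $D_h(\mathbf{w},\boldsymbol{\phi})=0$ and from the dual consistency term --- together with ensuring that every constant inherits the $\lambda$-independence from Theorem \ref{thm3.1} and $\mathbf{R}(\Omega)$, so that the bound does not degrade as $\lambda\to\infty$ and the scheme remains locking-free.
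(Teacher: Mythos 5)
Your proposal is correct and follows essentially the same route as the paper's proof: the Strang lemma combined with the commuting property \eqref{s3.22} of the C-R interpolant and the $\lambda$-weighted regularity estimate \eqref{s2.8} for the energy bound \eqref{s3.19}, and a nonconforming Aubin--Nitsche duality argument whose consistency terms are controlled by Theorem \ref{thm3.1} for the $\mathbf{L}^2$ bound \eqref{s3.20}. The only cosmetic differences are that the paper routes the divergence part of the interpolation error through the auxiliary function $\mathbf{w}^{*}$ of Lemma \ref{lem2.1} rather than directly through \eqref{s2.8}, and its duality identity \eqref{s3.27} uses the discrete dual solution $\Psi_h$ together with a supremum over data $\mathbf{g}$ where you use the interpolant $\mathbf{I}_h\boldsymbol{\phi}$ and the fixed data $\mathbf{w}-\mathbf{w}_h$; both versions hinge on exactly the observation you single out as the crux, namely that $D_h(\mathbf{w},\cdot)$ vanishes on conforming functions.
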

\begin{proof}
Combining \eqref{s2.7} and \eqref{s2.8} we deduce
\begin{equation}\label{s3.21}
\|\mathbf{w}^{*}\|_{\mathbf{H}^{1+s}(\Omega)}\leq\frac{ C}{1+\lambda}\|\mathbf{f}\|_{\mathbf{L}^{2}(\Omega)}.
\end{equation}
Referring to (5.8) in \cite{crouzeix} we have for any $\mathbf{v}\in \mathbf{H}^{1+s}(\Omega)$
\begin{equation}\label{s3.22}
(div_h\mathbf{I}_{h}\mathbf{v})|_{\kappa}=\frac{1}{|\kappa|}\int_{\kappa}div\mathbf{v} d\mathbf{x},~~~ \forall~\kappa\in \pi_{h},
\end{equation}
and
\begin{equation}\label{s3.23}
\|\mathbf{v}-\mathbf{I}_{h}\mathbf{v}\|_{\mathbf{L}^{2}(\Omega)}+h\|\mathbf{\nabla}_h(\mathbf{v}-\mathbf{I}_{h}\mathbf{v})\|_{\underset{\sim}{\mathbf{L}}^2(\Omega)}
\leq C h^{1+s}|\mathbf{v}|_{\mathbf{H}^{1+s}(\Omega)}.
\end{equation}
From \eqref{s2.6} and \eqref{s3.22} we get
\begin{equation}\label{s3.24}
div_h \mathbf{I}_{h}\mathbf{w}^{*}=\frac{1}{|\kappa|}\int_{\kappa}div\mathbf{w}^{*}d\mathbf{x}=\frac{1}{|\kappa|}\int_{\kappa}div\mathbf{w}d\mathbf{x}=div_h \mathbf{I}_{h}\mathbf{w}.
\end{equation}
By \eqref{s2.6}, \eqref{s3.24}, \eqref{s3.23} and \eqref{s3.21}, we deduce
\begin{align}
&\inf\limits_{\mathbf{v}\in\mathbf{S}_{0}^{h}(\Omega)}(\|\mathbf{w}-\mathbf{v}\|_{h})\leq \|\mathbf{w}-\mathbf{I}_{h}\mathbf{w}\|_{h} \nonumber\\
&\quad\quad\quad =(\mu\|\mathbf{\nabla}_h(\mathbf{w}-\mathbf{I}_{h}\mathbf{w})\|_{\underset{\sim}{\mathbf{L}}^2(\Omega)}^{2}
+(\mu+\lambda)\|div_h(\mathbf{w}^{*}-\mathbf{I}_{h}\mathbf{w}^{*})\|_{0,\Omega}^{2})^{\frac{1}{2}}\nonumber\\
&\quad\quad \quad\leq C h^{s}\|\mathbf{f}\|_{\mathbf{L}^{2}(\Omega)}.\label{s3.25}
\end{align}
From the Strang Lemma or (3.15) in \cite{brenner} we have
\begin{equation}\label{s3.26}
\|\mathbf{w}-\mathbf{w}_{h}\|_{h}\leq \inf\limits_{\mathbf{v}\in\mathbf{S}_{0}^{h}(\Omega)}\|\mathbf{w}-\mathbf{v}\|_{h}
+\sup\limits_{\mathbf{v}\in\mathbf{S}_{0}^{h}(\Omega)\setminus\{\mathbf{0}\}}\frac{|D_{h}(\mathbf{w},\mathbf{v})|}
{\|\mathbf{v}\|_{h}}.
\end{equation}
Substituting  \eqref{s3.25} and \eqref{s3.9} into \eqref{s3.26} we get  \eqref{s3.19}.

By Nitsche's technique, we have
\begin{align}
&\|\mathbf{w}-\mathbf{w}_{h}\|_{\mathbf{L}^{2}(\Omega)}\leq
\|\mathbf{w}-\mathbf{w}_{h}\|_{h}\sup\limits_{\mathbf{g}\in \mathbf{L}^{2}(\Omega)\setminus\{\mathbf{0}\}}\{\frac{1}{\|\mathbf{g}\|_{\mathbf{L}^{2}(\Omega)}}
\|\mathbf{\Psi}-\mathbf{\Psi}_{h}\|_{h}\} \nonumber\\
&\quad\quad \quad+
\sup\limits_{\mathbf{g}\in \mathbf{L}^{2}(\Omega)\setminus\{\mathbf{0}\}}\{\frac{1}{\|\mathbf{g}\|_{\mathbf{L}^{2}(\Omega)}}
(D_{h}(\mathbf{w}, \Psi-\Psi_{h}) +D_{h}(\Psi, \mathbf{w}-\mathbf{w}_{h}) \},\label{s3.27}
\end{align}
where for any $\mathbf{g}\in \mathbf{L}^{2}(\Omega)$, $\Psi\in \mathbf{H}_{0}^{1}(\Omega)$ is the solution of
\begin{equation}\label{s3.28}
a(\mathbf{v}, \Psi)=b(\mathbf{v},\mathbf{g}),~~~\forall \mathbf{v}\in \mathbf{H}_{0}^{1}(\Omega),
\end{equation}
and $\Psi_{h}\in \mathbf{S}_{0}^{h}(\Omega)$ is the C-R element solution of \eqref{s3.28}.\\
Using the same argument as \eqref{s3.9} and \eqref{s3.19} we get
\begin{align}
&\|\Psi-\Psi_{h}\|_{h}\leq C h^{s}\|\mathbf{g}\|_{\mathbf{L}^{2}(\Omega)},\label{s3.29}\\
&D_{h}(\Psi, \mathbf{w}-\mathbf{w}_{h})\leq C h^{s}\|\mathbf{g}\|_{L^{2}(\Omega)}\|\mathbf{w}-\mathbf{w}_{h}\|_{h}.\label{s3.30}
\end{align}
Substituting  \eqref{s3.29}, \eqref{s3.30}, \eqref{s3.9} and \eqref{s3.19} into  \eqref{s3.27} we get \eqref{s3.20}.
\end{proof}

Since \eqref{s2.4} and \eqref{s3.3} are well-posed (see \cite{brenner}), we can define two linear bounded operators
$\mathbf{T}: \mathbf{L}^{2}(\Omega)\rightarrow \mathbf{H}_{0}^{1}(\Omega)\hookrightarrow \mathbf{L}^{2}(\Omega)$ satisfying
\begin{equation}\label{s3.31}
a(\mathbf{T}\mathbf{f},\mathbf{v})=b(\mathbf{f},\mathbf{v}),~~~\forall \mathbf{v}\in \mathbf{H}_{0}^{1}(\Omega),
\end{equation}
and $\mathbf{T}_h: \mathbf{L}^{2}(\Omega)\rightarrow \mathbf{S}_{0}^{h}(\Omega)$ such that
\begin{equation}\label{s3.32}
a(\mathbf{T}_{h}\mathbf{f},\mathbf{v})=b(\mathbf{f},\mathbf{v}),~~~\forall \mathbf{v}\in \mathbf{S}_{0}^{h}(\Omega).
\end{equation}

Because of the compact inclusion $\mathbf{H}_{0}^{1}(\Omega)\hookrightarrow \mathbf{L}^{2}(\Omega)$, we know that $\mathbf{T}$ is compact.
It is easy to know that \eqref{s2.2} and \eqref{s3.2} has the following equivalent operator form, respectively:
\begin{equation*}
\mathbf{u}=\omega \mathbf{T}\mathbf{u},~~~~~~\mathbf{u}_{h}=\omega_{h} \mathbf{T}_{h}\mathbf{u}_{h}.
\end{equation*}
Thus,
\begin{equation*}
\mathbf{T}\mathbf{u}=\frac{1}{\omega}\mathbf{u},\quad\quad\mathbf{T}_{h}\mathbf{u}_{h}=\frac{1}{\omega_{h} }\mathbf{u}_{h}.
\end{equation*}
Denote $\varpi=\frac{1}{\omega}, \varpi_h=\frac{1}{\omega_{h}}$. $\varpi$ and $\varpi_h$ are called the eigenvalues of $\mathbf{T}$ and $\mathbf{T}_h$, respectively.\\
\indent From \eqref{s3.20} we have
\begin{equation*}
\|\mathbf{T}-\mathbf{T}_h\|_{\mathbf{L}^{2}(\Omega)\rightarrow \mathbf{L}^{2}(\Omega) }=\sup\limits_{\mathbf{f}\in\mathbf{L}^{2}(\Omega)\setminus\{\mathbf{0}\}}\frac{\|\mathbf{Tf}-\mathbf{T_{h}f}\|_{\mathbf{L}^{2}(\Omega)}}{\|\mathbf{f}\|_{\mathbf{L}^{2}(\Omega)}}\leq C h^{2s}\rightarrow 0~~(h\rightarrow 0).
\end{equation*}

Suppose that $\{\omega_{l}\}$ and $\{\omega_{l,h}\}$, arranged from small to large and each repeated as many
times as its multiplicity, are enumerations of the eigenvalues of
\eqref{s2.2} and \eqref{s3.2} respectively, and $\omega=\omega_{j}$ is the $j$th eigenvalue with the algebraic multiplicity $q$, $\omega=\omega_{j}=\omega_{j+1}=\cdots=\omega_{j+q-1}$.
 Since $\mathbf{T}_{h}$ converges to $\mathbf{T}$, $q$ eigenvalues $\omega_{j,h}, \omega_{j+1,h}, \cdots, \omega_{j+q-1,h}$ of \eqref{s3.2} will converge to $\omega$. Let $\mathbf{M}(\omega)$ be the space spanned by all eigenfunctions corresponding to the
eigenvalue $\omega$, and $\mathbf{M}_{h}(\omega)$ be the space spanned by all eigenfunctions of \eqref{s3.2} corresponding to the
eigenvalues $\omega_{l,h}(l=j,j+1,\cdots,j+q-1)$. Let $\widehat{\mathbf{M}}(\omega)=\{\mathbf{v}\in \mathbf{M}(\omega): \|\mathbf{v}\|_{h}=1\}$,
$\widehat{\mathbf{M}}_h(\omega)=\{\mathbf{v}\in \mathbf{M}_h(\omega): \|\mathbf{v}\|_{h}=1\}$. We also write $\mathbf{M}(\omega)=\mathbf{M}(\varpi)$,
$\mathbf{M}_h(\omega)=\mathbf{M}_h(\varpi)$, $\widehat{\mathbf{M}}(\omega)=\widehat{\mathbf{M}}(\varpi)$, and $\widehat{\mathbf{M}}_h(\omega)=\widehat{\mathbf{M}}_h(\varpi)$.

From Lemma 2.4 in \cite{yang2011} we have the following results.
\begin{theorem}\label{thm3.3}
Suppose that $\mathbf{R}(\Omega)$ holds. Let $\omega$ and $\omega_{h}$ be the $j$th eigenvalue of \eqref{s2.2} and \eqref{s3.2}, respectively, then
$\omega_{h}\rightarrow\omega$ as $h\rightarrow 0$ and
\begin{equation}\label{s3.33}
|\omega-\omega_{h}|\leq  C \|(\mathbf{T}-\mathbf{T}_h)|_{\mathbf{M}(\omega)}\|_{\mathbf{L}^{2}(\Omega)}.
\end{equation}
For any eigenfunction $\mathbf{u}_{h}$ corresponding to $\omega_{h}$, satisfying $\|\mathbf{u}_{h}\|_{h}=1$, there exists
 eigenfunction $\mathbf{u}\in \mathbf{M}(\omega)$ such that
 \begin{align}
&\|\mathbf{u}_{h}-\mathbf{u}\|_{h}\leq  \omega\|\mathbf{T}\mathbf{u}-\mathbf{T}_h\mathbf{u}\|_h+C \|(\mathbf{T}-\mathbf{T}_h)|_{\mathbf{M}(\omega)}\|_{\mathbf{L}^{2}(\Omega)},\label{s3.34}\\
&\|\mathbf{u}_{h}-\mathbf{u}\|_{\mathbf{L}^2(\Omega)}\leq  C \|(\mathbf{T}-\mathbf{T}_h)|_{\mathbf{M}(\omega)}\|_{\mathbf{L}^{2}(\Omega)}.\label{s3.35}
\end{align}
For any $\mathbf{u}\in \widehat{\mathbf{M}}(\omega)$, there exists $\mathbf{u}_h\in \mathbf{M}_h(\omega)$ such that
\begin{equation}\label{s3.36}
\|\mathbf{u}-\mathbf{u}_{h}\|_{h}\leq  C(\|(\mathbf{T}-\mathbf{T}_h)|_{\mathbf{M}(\omega)}\|_{h}+\|(\mathbf{T}-\mathbf{T}_h)|_{\mathbf{M}(\omega)}\|_{\mathbf{L}^{2}(\Omega)}) .
\end{equation}
\end{theorem}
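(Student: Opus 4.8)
The plan is to recast the continuous and discrete eigenvalue problems \eqref{s2.2} and \eqref{s3.2} into operator form and then invoke the abstract spectral approximation theory for self-adjoint compact operators packaged in Lemma 2.4 of \cite{yang2011}. The whole argument is a verification that the hypotheses of that abstract lemma are met; the spectral estimates \eqref{s3.33}--\eqref{s3.36} are then read off directly.

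First I would record the algebraic structure. Since $a(\cdot,\cdot)$ is symmetric and $\mathbf{H}_{0}^{1}$-elliptic and $b(\cdot,\cdot)$ is the symmetric positive $\mathbf{L}^{2}$ inner product, the operator $\mathbf{T}$ of \eqref{s3.31} is self-adjoint with respect to $b(\cdot,\cdot)$: taking $\mathbf{v}=\mathbf{T}\mathbf{g}$ in $a(\mathbf{T}\mathbf{f},\mathbf{v})=b(\mathbf{f},\mathbf{v})$ and using the symmetry of $a(\cdot,\cdot)$ gives $b(\mathbf{f},\mathbf{T}\mathbf{g})=a(\mathbf{T}\mathbf{f},\mathbf{T}\mathbf{g})=a(\mathbf{T}\mathbf{g},\mathbf{T}\mathbf{f})=b(\mathbf{g},\mathbf{T}\mathbf{f})$. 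The identical computation with $a_{h}(\cdot,\cdot)$ in place of $a(\cdot,\cdot)$ shows $\mathbf{T}_{h}$ is self-adjoint on $\mathbf{S}_{0}^{h}(\Omega)$. The compact inclusion $\mathbf{H}_{0}^{1}(\Omega)\hookrightarrow\mathbf{L}^{2}(\Omega)$ makes $\mathbf{T}$ compact, so its spectrum consists of positive eigenvalues $\varpi_{l}=1/\omega_{l}$ accumulating only at $0$, and \eqref{s2.2} and \eqref{s3.2} are exactly the eigenvalue equations $\mathbf{u}=\omega\mathbf{T}\mathbf{u}$ and $\mathbf{u}_{h}=\omega_{h}\mathbf{T}_{h}\mathbf{u}_{h}$.

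Next I would supply the single analytic ingredient the abstract lemma requires, namely norm convergence. Estimate \eqref{s3.20} of Theorem \ref{thm3.2} yields $\|\mathbf{T}-\mathbf{T}_{h}\|_{\mathbf{L}^{2}(\Omega)\to\mathbf{L}^{2}(\Omega)}\leq Ch^{2s}\to 0$, and since $\mathbf{R}(\Omega)$ keeps $C$ independent of $\mu,\lambda$, this convergence is uniform in the Lam\'e parameters. Norm convergence guarantees that exactly $q$ discrete eigenvalues $\omega_{l,h}$ ($l=j,\dots,j+q-1$) cluster around the eigenvalue $\omega$ of algebraic multiplicity $q$, and that the discrete eigenspace $\mathbf{M}_{h}(\omega)$ approaches $\mathbf{M}(\omega)$.

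With the framework in place, I would apply Lemma 2.4 of \cite{yang2011}, the nonconforming-element version of the Babu\v{s}ka--Osborn spectral approximation theory, whose hypotheses --- self-adjoint compact $\mathbf{T}$, self-adjoint $\mathbf{T}_{h}$, the two norms $\|\cdot\|_{h}$ and $\|\cdot\|_{\mathbf{L}^{2}(\Omega)}$, and $\|\mathbf{T}-\mathbf{T}_{h}\|_{\mathbf{L}^{2}(\Omega)\to\mathbf{L}^{2}(\Omega)}\to 0$ --- are now all satisfied. The lemma delivers $\omega_{h}\to\omega$ together with the eigenvalue bound \eqref{s3.33} and the eigenfunction bounds \eqref{s3.34}--\eqref{s3.36} verbatim; the extra term $\omega\|\mathbf{T}\mathbf{u}-\mathbf{T}_{h}\mathbf{u}\|_{h}$ in \eqref{s3.34} is precisely the best-approximation-plus-consistency contribution that the nonconforming form of the lemma isolates in the energy norm. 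I expect the only delicate point to be bookkeeping: checking that each spectral projection and each of the eigenspaces $\widehat{\mathbf{M}}(\omega)$ and $\mathbf{M}_{h}(\omega)$ is measured in the norm ($\|\cdot\|_{h}$ or $\mathbf{L}^{2}$) demanded by the respective inequality, so that the constants inherit the $\mu,\lambda$-independence already established for $\|\mathbf{T}-\mathbf{T}_{h}\|$. The spectral theory itself is then a direct citation rather than a fresh argument.
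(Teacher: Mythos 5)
Your proposal is correct and follows essentially the same route as the paper: the paper likewise sets up the operator framework ($\mathbf{T}$ compact via the compact embedding, $\mathbf{T}_h$ from \eqref{s3.32}, the equivalences $\mathbf{u}=\omega\mathbf{T}\mathbf{u}$ and $\mathbf{u}_h=\omega_h\mathbf{T}_h\mathbf{u}_h$), deduces $\|\mathbf{T}-\mathbf{T}_h\|_{\mathbf{L}^{2}(\Omega)\rightarrow\mathbf{L}^{2}(\Omega)}\leq Ch^{2s}\rightarrow 0$ from \eqref{s3.20}, and then cites Lemma 2.4 of \cite{yang2011} to read off \eqref{s3.33}--\eqref{s3.36} directly. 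Your explicit verification of self-adjointness of $\mathbf{T}$ and $\mathbf{T}_h$ is a detail the paper leaves implicit, but it is a correct and harmless addition.
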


Theorem 3.2 can also be expressed as
\begin{align*}
&\|\mathbf{T}\mathbf{f}-\mathbf{T}_{h}\mathbf{f}\|_{h}\leq C h^{s}\|\mathbf{f}\|_{\mathbf{L}^{2}(\Omega)},\\
&\|\mathbf{T}\mathbf{f}-\mathbf{T}_{h}\mathbf{f}\|_{\mathbf{L}^{2}(\Omega)}\leq C  h^{2s}\|\mathbf{f}\|_{\mathbf{L}^{2}(\Omega)},
\end{align*}
thus we have
\begin{equation}\label{s3.37}
\|(\mathbf{T}-\mathbf{T}_{h})|_{\mathbf{M}(\omega)}\|_{h}\leq  C h^{s},\quad\|(\mathbf{T}-\mathbf{T}_{h})|_{\mathbf{M}(\omega)}\|_{\mathbf{L}^2(\Omega)}\leq  C h^{2s}.
\end{equation}

\section{Two-grid discretizations for the elastic eigenvalue problem}\label{frame-model}

In this section, we will establish two-grid discretization schemes for the elastic eigenvalue problem.

\indent Let $\pi_{H}(\Omega)$ be a regular triangulation of size $H\in (0,1)$ and $\pi_{h}(\Omega)$ ($h\ll H$) be a fine grid refined from $\pi_{H}(\Omega)$.

\noindent{\bf Scheme 4.1.} Two-grid discretization based on inverse iteration

\indent{\bf Step 1.} Solve \eqref{s3.2} on a coarse grid $\pi_{H}(\Omega)$: Find $\omega_{H}\in \mathbb{R}$, $\mathbf{u}_{H}\in \mathbf{S}_{0}^{H}(\Omega)$ such that $\|\mathbf{u}_{H}\|_{H}=1$ and
\begin{equation*}
a_{H}(\mathbf{u}_{H},\mathbf{v})=\omega_{H}b(\mathbf{u}_{H},\mathbf{v}),~~~\forall \mathbf{v}\in \mathbf{S}_{0}^{H}(\Omega).
\end{equation*}
\indent{\bf Step 2.} Solve a linear boundary value problem on a fine grid  $\pi_{h}(\Omega)$:
find $\mathbf{u}^{h}\in \mathbf{S}_{0}^{h}(\Omega)$ such that
\begin{equation*}
a_h(\mathbf{u}^{h} ,\mathbf{v})=\omega_{H}b(\mathbf{u}_{H},\mathbf{v}),~~~\forall \mathbf{v}\in \mathbf{S}_{0}^{h}(\Omega).
\end{equation*}
\indent{\bf Step 3.} Compute the Rayleigh quotient
\begin{equation*}
\omega^{h}=\frac{a_h(\mathbf{u}^{h},\mathbf{u}^{h})}{b(\mathbf{u}^{h},\mathbf{u}^{h})}.
\end{equation*}

\noindent{\bf Scheme 4.2.} Two-grid discretization based on the shifted-inverse iteration

\indent{\bf Step 1.} Solve \eqref{s3.2} on a coarse grid $\pi_{H}(\Omega)$: Find $\omega_{H}\in \mathbb{R}$, $\mathbf{u}_{H}\in \mathbf{S}_{0}^{H}(\Omega)$ such that $\|\mathbf{u}_{H}\|_{H}=1$ and
\begin{equation*}
a_{H}(\mathbf{u}_{H},\mathbf{v})=\omega_{H}b(\mathbf{u}_{H},\mathbf{v}),~~~\forall \mathbf{v}\in \mathbf{S}_{0}^{H}(\Omega).
\end{equation*}
\indent{\bf Step 2.} Solve a linear boundary value problem on a fine grid  $\pi_{h}(\Omega)$:
find $\mathbf{u}'\in \mathbf{S}_{0}^{h}(\Omega)$ such that
\begin{equation*}
a_h(\mathbf{u}' ,\mathbf{v})-\omega_{H}b(\mathbf{u}',\mathbf{v})=b(\mathbf{u}_{H},\mathbf{v}),~~~\forall \mathbf{v}\in \mathbf{S}_{0}^{h}(\Omega),
\end{equation*}
and set $\mathbf{u}^{h}=\mathbf{u}'/\|\mathbf{u}'\|_{h}$.\\
\indent{\bf Step 3.} Compute the Rayleigh quotient
\begin{equation*}
\omega^{h}=\frac{a_h(\mathbf{u}^{h},\mathbf{u}^{h})}{b(\mathbf{u}^{h},\mathbf{u}^{h})}.
\end{equation*}

\begin{lemma}\label{lem4.1}
Let $(\omega, \mathbf{u})$ be an eigenpair of \eqref{s2.2}, then, for any $\mathbf{v}\in \mathbf{H}(h)$ with $\|\mathbf{v}\|_{ \mathbf{L}^{2}(\Omega)}\neq 0$, the generalized Rayleigh quotient satisfies
\begin{equation*}
\frac{a_{h}(\mathbf{v}, \mathbf{v})}{\|\mathbf{v}\|_{ \mathbf{L}^{2}(\Omega)}^2}-\omega=\frac{a_{h}(\mathbf{u}-\mathbf{v}, \mathbf{u}-\mathbf{v})}{\|\mathbf{v}\|_{ \mathbf{L}^{2}(\Omega)}^2}-\omega\frac{\|\mathbf{u}-\mathbf{v}\|_{\mathbf{L}^{2}(\Omega)}^2}{\|\mathbf{v}\|_{ \mathbf{L}^{2}(\Omega)}^2}+2\frac{D_{h}(\mathbf{u}, \mathbf{v})}{\|\mathbf{v}\|_{ \mathbf{L}^{2}(\Omega)}^2}.
\end{equation*}
 \end{lemma}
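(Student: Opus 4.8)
The plan is to clear the common denominator $\|\mathbf{v}\|_{\mathbf{L}^{2}(\Omega)}^{2}$ and then verify the resulting identity by a direct algebraic expansion, using only bilinearity and symmetry of $a_{h}$ and $b$ together with the variational characterization of the eigenpair. Since $\rho\equiv 1$ we have $b(\mathbf{w},\mathbf{w})=\|\mathbf{w}\|_{\mathbf{L}^{2}(\Omega)}^{2}$, so multiplying the claimed identity through by $\|\mathbf{v}\|_{\mathbf{L}^{2}(\Omega)}^{2}$ reduces the lemma to establishing
\begin{equation*}
a_{h}(\mathbf{v},\mathbf{v})-\omega b(\mathbf{v},\mathbf{v})=a_{h}(\mathbf{u}-\mathbf{v},\mathbf{u}-\mathbf{v})-\omega b(\mathbf{u}-\mathbf{v},\mathbf{u}-\mathbf{v})+2D_{h}(\mathbf{u},\mathbf{v}).
\end{equation*}

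First I would expand the two quadratic terms on the right via symmetry, writing $a_{h}(\mathbf{u}-\mathbf{v},\mathbf{u}-\mathbf{v})=a_{h}(\mathbf{u},\mathbf{u})-2a_{h}(\mathbf{u},\mathbf{v})+a_{h}(\mathbf{v},\mathbf{v})$ and likewise for $b$. The key identification is the reading of $D_{h}(\mathbf{u},\mathbf{v})$ for an eigenfunction: since $\mathbf{u}$ solves $-\nabla\cdot\sigma(\mathbf{u})=\omega\mathbf{u}$, the relevant datum in the source problem \eqref{s2.4} is $\mathbf{f}=\omega\mathbf{u}$, so by the definition \eqref{s3.4} of the consistency term,
\begin{equation*}
D_{h}(\mathbf{u},\mathbf{v})=a_{h}(\mathbf{u},\mathbf{v})-b(\omega\mathbf{u},\mathbf{v})=a_{h}(\mathbf{u},\mathbf{v})-\omega b(\mathbf{u},\mathbf{v}).
\end{equation*}
Substituting $2D_{h}(\mathbf{u},\mathbf{v})=2a_{h}(\mathbf{u},\mathbf{v})-2\omega b(\mathbf{u},\mathbf{v})$ into the expanded right-hand side, the cross terms $\mp 2a_{h}(\mathbf{u},\mathbf{v})$ cancel and the terms $\pm 2\omega b(\mathbf{u},\mathbf{v})$ cancel, leaving $a_{h}(\mathbf{u},\mathbf{u})-\omega b(\mathbf{u},\mathbf{u})+a_{h}(\mathbf{v},\mathbf{v})-\omega b(\mathbf{v},\mathbf{v})$.

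To conclude I must show that the residual in $\mathbf{u}$ vanishes, i.e. $a_{h}(\mathbf{u},\mathbf{u})-\omega b(\mathbf{u},\mathbf{u})=0$. Here the structure of the problem enters: because $\mathbf{u}\in\mathbf{H}_{0}^{1}(\Omega)$ has no interelement jumps, the broken form coincides with the continuous one, $a_{h}(\mathbf{u},\mathbf{u})=a(\mathbf{u},\mathbf{u})$, and the weak eigenvalue equation \eqref{s2.2} tested with $\mathbf{v}=\mathbf{u}$ gives $a(\mathbf{u},\mathbf{u})=\omega b(\mathbf{u},\mathbf{u})$. Thus the right-hand side collapses to $a_{h}(\mathbf{v},\mathbf{v})-\omega b(\mathbf{v},\mathbf{v})$, matching the left-hand side, and dividing again by $\|\mathbf{v}\|_{\mathbf{L}^{2}(\Omega)}^{2}$ recovers the stated formula.

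I expect no serious obstacle, since the computation is entirely elementary. The only points requiring care are the correct interpretation of $D_{h}(\mathbf{u},\mathbf{v})$ for an eigenfunction, namely that the source term must be taken as $\mathbf{f}=\omega\mathbf{u}$ rather than an abstract datum, and the observation that $a_{h}$ and $a$ agree on $\mathbf{H}_{0}^{1}(\Omega)$, which is precisely what makes the continuous eigenvalue relation usable inside the broken bilinear form.
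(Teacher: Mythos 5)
Your proof is correct and follows essentially the same route as the paper's: both rest on the identification $D_{h}(\mathbf{u},\mathbf{v})=a_{h}(\mathbf{u},\mathbf{v})-\omega b(\mathbf{u},\mathbf{v})$ (taking $\mathbf{f}=\omega\mathbf{u}$ in the source problem), the bilinear expansion of $a_{h}(\mathbf{u}-\mathbf{v},\mathbf{u}-\mathbf{v})-\omega b(\mathbf{u}-\mathbf{v},\mathbf{u}-\mathbf{v})$, and the eigenfunction relation $a_{h}(\mathbf{u},\mathbf{u})=\omega b(\mathbf{u},\mathbf{u})$. The only difference is cosmetic (you clear the denominator first and verify the identity, while the paper expands the right-hand side and divides at the end), and your explicit remark that $a_{h}$ agrees with $a$ on $\mathbf{H}_{0}^{1}(\Omega)$ makes precise a step the paper leaves implicit.
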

\begin{proof}
 For any $\mathbf{v}\in \mathbf{H}(h)$, from \eqref{s2.2}, \eqref{s3.31} and \eqref{s3.4} we have
\begin{equation*}
D_{h}(\mathbf{u},\mathbf{v})=a_{h}(\mathbf{u},\mathbf{v})- b(\omega\mathbf{u}, \mathbf{v}),
\end{equation*}
thus,
\begin{align*}
&a_{h}(\mathbf{u}-\mathbf{v}, \mathbf{u}-\mathbf{v})-\omega b(\mathbf{u}-\mathbf{v}, \mathbf{u}-\mathbf{v})\\
&\quad\quad =a_{h}(\mathbf{u}, \mathbf{u})+a_{h}(\mathbf{v}, \mathbf{v})-2a_{h}(\mathbf{u}, \mathbf{v})-\omega(b(\mathbf{u}, \mathbf{u})+b(\mathbf{v}, \mathbf{v})-2b(\mathbf{u}, \mathbf{v})) \nonumber\\
&\quad\quad =\omega b(\mathbf{u}, \mathbf{u})+a_{h}(\mathbf{v}, \mathbf{v})-2D_h(\mathbf{u}, \mathbf{v})-\omega b(\mathbf{u}, \mathbf{u})-\omega b(\mathbf{v}, \mathbf{v})\\
&\quad\quad =a_{h}(\mathbf{v}, \mathbf{v})-\omega b(\mathbf{v}, \mathbf{v})-2D_h(\mathbf{u}, \mathbf{v}),
\end{align*}
and dividing $\|\mathbf{v}\|_{ \mathbf{L}^{2}(\Omega)}^2$ in both sides of the above we
obtain the desired conclusion.
\end{proof}

\begin{theorem}\label{thm4.1}
Suppose that $\mathbf{R}(\Omega)$ holds.
 Assume that $(\omega^{h}, \mathbf{u}^{h})$ is an approximate eigenpair obtained
by Scheme 4.1. Then there exists an eigenfunction $\mathbf{u}\in \mathbf{M}(\omega)$ such that
\begin{align}
&\|\mathbf{u}^h-\mathbf{u}\|_{h}\leq C(H^{2s}+h^{s}),\label{s4.1}\\
&|\omega^{h}-\omega|\leq C(H^{4s}+h^{2s}).\label{s4.2}
\end{align}
\end{theorem}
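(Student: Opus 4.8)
The plan is to reduce everything to the operator form $\mathbf{u}^{h}=\omega_{H}\mathbf{T}_{h}\mathbf{u}_{H}$, which follows directly from Step 2 together with the definition \eqref{s3.32} of $\mathbf{T}_{h}$. First I would fix the target eigenfunction: applying Theorem \ref{thm3.3} on the \emph{coarse} grid (i.e. with $H$ in place of $h$) to the computed pair $(\omega_{H},\mathbf{u}_{H})$ produces $\mathbf{u}\in\mathbf{M}(\omega)$ with $|\omega-\omega_{H}|\leq CH^{2s}$ and $\|\mathbf{u}-\mathbf{u}_{H}\|_{\mathbf{L}^{2}(\Omega)}\leq CH^{2s}$, by \eqref{s3.33}, \eqref{s3.35} and \eqref{s3.37}. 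I will use this single $\mathbf{u}$ throughout. I also need the $\lambda$-independent stability bound $\|\mathbf{T}_{h}\mathbf{g}\|_{h}\leq C\|\mathbf{g}\|_{\mathbf{L}^{2}(\Omega)}$, which comes from $\|\mathbf{T}_{h}\mathbf{g}\|_{h}^{2}=a_{h}(\mathbf{T}_{h}\mathbf{g},\mathbf{T}_{h}\mathbf{g})=b(\mathbf{g},\mathbf{T}_{h}\mathbf{g})\leq\|\mathbf{g}\|_{\mathbf{L}^{2}(\Omega)}\|\mathbf{T}_{h}\mathbf{g}\|_{\mathbf{L}^{2}(\Omega)}$ and the Poincar\'e--Friedrichs inequality.

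To prove \eqref{s4.1} I would split via the triangle inequality $\|\mathbf{u}^{h}-\mathbf{u}\|_{h}\leq\|\mathbf{u}^{h}-\omega\mathbf{T}_{h}\mathbf{u}\|_{h}+\|\omega\mathbf{T}_{h}\mathbf{u}-\mathbf{u}\|_{h}$. For the first piece, writing $\mathbf{u}^{h}-\omega\mathbf{T}_{h}\mathbf{u}=\mathbf{T}_{h}(\omega_{H}\mathbf{u}_{H}-\omega\mathbf{u})$ and applying the stability bound reduces matters to $\|\omega_{H}\mathbf{u}_{H}-\omega\mathbf{u}\|_{\mathbf{L}^{2}(\Omega)}$, which I would control by the decomposition $\omega_{H}\mathbf{u}_{H}-\omega\mathbf{u}=\omega_{H}(\mathbf{u}_{H}-\mathbf{u})+(\omega_{H}-\omega)\mathbf{u}$ and the coarse-grid estimates, giving $CH^{2s}$. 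For the second piece, since $\mathbf{u}=\omega\mathbf{T}\mathbf{u}$ one has $\omega\mathbf{T}_{h}\mathbf{u}-\mathbf{u}=\omega(\mathbf{T}_{h}-\mathbf{T})\mathbf{u}$, and \eqref{s3.37} yields $Ch^{s}$. Summing gives \eqref{s4.1}. The same two-piece argument in the $\mathbf{L}^{2}$-norm, using the $h^{2s}$ half of \eqref{s3.37}, gives the sharper bound $\|\mathbf{u}^{h}-\mathbf{u}\|_{\mathbf{L}^{2}(\Omega)}\leq C(H^{2s}+h^{2s})$, which I will need below.

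For \eqref{s4.2} I would invoke Lemma \ref{lem4.1} with $\mathbf{v}=\mathbf{u}^{h}$; since $b(\mathbf{u}^{h},\mathbf{u}^{h})=\|\mathbf{u}^{h}\|_{\mathbf{L}^{2}(\Omega)}^{2}$, the left-hand side is exactly $\omega^{h}-\omega$. As $\|\mathbf{u}^{h}-\mathbf{u}\|_{h}\to0$ forces $\|\mathbf{u}^{h}\|_{\mathbf{L}^{2}(\Omega)}\geq c>0$, the three resulting terms are estimated as follows. The first, $\|\mathbf{u}-\mathbf{u}^{h}\|_{h}^{2}/\|\mathbf{u}^{h}\|_{\mathbf{L}^{2}(\Omega)}^{2}$, is $\leq C(H^{2s}+h^{s})^{2}\leq C(H^{4s}+h^{2s})$ by \eqref{s4.1} and the inequality $H^{2s}h^{s}\leq\tfrac12(H^{4s}+h^{2s})$. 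The second, $\omega\|\mathbf{u}-\mathbf{u}^{h}\|_{\mathbf{L}^{2}(\Omega)}^{2}/\|\mathbf{u}^{h}\|_{\mathbf{L}^{2}(\Omega)}^{2}$, is $\leq C(H^{2s}+h^{2s})^{2}\leq C(H^{4s}+h^{2s})$ by the $\mathbf{L}^{2}$-estimate just obtained.

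The main obstacle is the consistency term $2D_{h}(\mathbf{u},\mathbf{u}^{h})/\|\mathbf{u}^{h}\|_{\mathbf{L}^{2}(\Omega)}^{2}$, since the bare estimate from Theorem \ref{thm3.1} would only deliver the rate $h^{s}$, not the required doubled rate. The key device is that the exact eigenfunction is conforming, $\mathbf{u}\in\mathbf{H}_{0}^{1}(\Omega)$, so that $a_{h}(\mathbf{u},\mathbf{u})=a(\mathbf{u},\mathbf{u})=\omega b(\mathbf{u},\mathbf{u})$ and hence $D_{h}(\mathbf{u},\mathbf{u})=0$. Consequently $D_{h}(\mathbf{u},\mathbf{u}^{h})=D_{h}(\mathbf{u},\mathbf{u}^{h}-\mathbf{u})$, and now \eqref{s3.9} applied to $\mathbf{u}^{h}-\mathbf{u}\in\mathbf{H}(h)$ (with source $\mathbf{f}=\omega\mathbf{u}$) together with \eqref{s4.1} gives $|D_{h}(\mathbf{u},\mathbf{u}^{h})|\leq Ch^{s}\|\mathbf{u}^{h}-\mathbf{u}\|_{h}\leq Ch^{s}(H^{2s}+h^{s})\leq C(H^{4s}+h^{2s})$, again by the arithmetic--geometric inequality. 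Collecting the three terms yields \eqref{s4.2}. The delicate point throughout is that every constant $C$ must be traced to be independent of $\mu,\lambda$; this is already guaranteed because each ingredient I rely on ($\mathbf{R}(\Omega)$, Theorem \ref{thm3.1}, the rates \eqref{s3.37}, and the stability of $\mathbf{T}_{h}$) carries a $\lambda$-independent constant.
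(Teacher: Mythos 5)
Your proof is correct and follows essentially the same route as the paper: represent $\mathbf{u}^h=\omega_H\mathbf{T}_h\mathbf{u}_H$, fix $\mathbf{u}\in\mathbf{M}(\omega)$ via Theorem \ref{thm3.3} applied on the coarse grid, split off a $\mathbf{T}_h(\text{coarse-grid error})$ term and an $\omega(\mathbf{T}_h-\mathbf{T})\mathbf{u}$ term for \eqref{s4.1}, and obtain \eqref{s4.2} from Lemma \ref{lem4.1} together with the key identity $D_h(\mathbf{u},\mathbf{u})=0$ and Theorem \ref{thm3.1}. Your only departures are inessential refinements: you bound $\|\mathbf{T}_h\mathbf{g}\|_h\le C\|\mathbf{g}\|_{\mathbf{L}^2(\Omega)}$ directly by coercivity plus the Poincar\'e--Friedrichs inequality (the paper instead inserts $\mathbf{T}$ and combines the operator error estimate \eqref{s3.37} with continuous stability), and you derive the sharper $\mathbf{L}^2$ bound $C(H^{2s}+h^{2s})$ where the paper settles for $C(H^{2s}+h^{s})$ via \eqref{s4.3}, which already suffices because that term enters \eqref{s4.6} squared.
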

\begin{proof}
Let $\mathbf{u}\in \mathbf{M}(\omega)$ such that $\mathbf{u}_{H}-\mathbf{u}$ and $\omega_{H}-\omega$ satisfy Theorem 3.3.
Since $\mathbf{u}=\omega \mathbf{T}\mathbf{u}$, and from the definition of $\mathbf{T}_{h}$ and Step 2 in Scheme 4.1 we get $\mathbf{u}^{h}=\omega_{H} \mathbf{T}_{h}\mathbf{u}_{H}$, then, from Theorem 3.3, noting  \eqref{s3.37}, we deduce
\begin{align*}
&\|\mathbf{u}^{h}-\mathbf{u}\|_{h}= \|\omega_{H}\mathbf{T}_{h}\mathbf{u}_{H}-\omega \mathbf{T}\mathbf{u}\|_{h}\\
&\quad\quad =\|\omega_{H}(\mathbf{T}_{h}\mathbf{u}_{H}-\mathbf{T}_{h}\mathbf{u})+\omega_{H}(\mathbf{T}_{h}\mathbf{u}- \mathbf{T}\mathbf{u})+(\omega_H-\omega)\mathbf{T}\mathbf{u}\|_{h}\\
&\quad\quad \leq |\omega_H|\cdot \|\mathbf{T}_{h}(\mathbf{u}_{H}-\mathbf{u})\|_{h}+|\omega_H|\cdot \|\mathbf{T}_{h}\mathbf{u}-\mathbf{T}\mathbf{u}\|_{h}+|\omega_H-\omega|\cdot \|\mathbf{T}\mathbf{u}\|_{h}\\
&\quad\quad \leq C(\|\mathbf{T}_{h}(\mathbf{u}_{H}-\mathbf{u})-\mathbf{T}(\mathbf{u}_{H}-\mathbf{u})\|_{h}+\|\mathbf{T}(\mathbf{u}_{H}- \mathbf{u})\|_{h})+Ch^s+CH^{2s}\\
&\quad\quad \leq C( h^{s}\|\mathbf{u}_{H}-\mathbf{u}\|_{\mathbf{L}^{2}(\Omega)}+\|\mathbf{u}_{H}-\mathbf{u})\|_{\mathbf{L}^{2}(\Omega)})+Ch^s+CH^{2s}\\
&\quad\quad \leq C(H^{2s}+h^s),
\end{align*}
namely,  \eqref{s4.1} is valid.

Because $\mathbf{u}\in \mathbf{H}^1_0(\Omega)$ and $\mathbf{u}^{h}\in \mathbf{S}_0^h(\Omega)$ are piecewise $\mathbf{H}^1$-functions,
using the Poincar$\acute{e}$-Friedrichs inequality (cf. (1.5) in \cite{brenner2003}) we have
\begin{equation}\label{s4.3}
\|\mathbf{u}^{h}-\mathbf{u}\|_{\mathbf{L}^2(\Omega)}\leq C|\mathbf{u}^{h}-\mathbf{u}|_{1,h}\leq C\|\mathbf{u}^{h}-\mathbf{u}\|_{h},
\end{equation}
thus, combing with \eqref{s4.1} we get
\begin{equation}\label{s4.4}
\|\mathbf{u}^{h}-\mathbf{u}\|_{\mathbf{L}^{2}(\Omega)}\leq C\|\mathbf{u}^{h}-\mathbf{u}\|_{h}\leq C(H^{2s}+h^{s}).
\end{equation}
Since $D_h(\mathbf{u}, \mathbf{u})=0$, from Theorem 3.1 and  \eqref{s4.1} we have
 \begin{align}
&|D_h(\mathbf{u}, \mathbf{u}^{h})|=|D_h(\mathbf{u}, \mathbf{u}^{h}-\mathbf{u})|\leq  C h^s\|\mathbf{u}\|_{ \mathbf{L}^{2}(\Omega)}\|\mathbf{u}^{h}-\mathbf{u}\|_{h} \nonumber\\
&\quad\quad \leq C h^s(H^{2s}+h^s)\|\mathbf{u}\|_{ \mathbf{L}^{2}(\Omega)}\nonumber\\
&\quad\quad \leq C(h^{2s}+h^{s}H^{2s}).\label{s4.5}
\end{align}

From Lemma 4.1 we have
\begin{align}
&\omega^{h}-\omega=\frac{a_{h}(\mathbf{u}^{h},\mathbf{u}^{h})}{b(\mathbf{u}^{h},\mathbf{u}^{h})}-\omega \nonumber\\
&\quad\quad =\frac{a_{h}(\mathbf{u}-\mathbf{u}^{h}, \mathbf{u}-\mathbf{u}^{h})}{\|\mathbf{u}^{h}\|_{ \mathbf{L}^{2}(\Omega)}^2}-\omega\frac{\|\mathbf{u}-\mathbf{u}^{h}\|_{\mathbf{L}^{2}(\Omega)}^2}{\|\mathbf{u}^{h}\|_{ \mathbf{L}^{2}(\Omega)}^2}+2\frac{D_{h}(\mathbf{u}, \mathbf{u}^{h})}{\|\mathbf{u}^{h}\|_{ \mathbf{L}^{2}(\Omega)}^2}\nonumber\\
&\quad\quad =\frac{\|\mathbf{u}-\mathbf{u}^{h}\|_{h}^2}{\|\mathbf{u}^{h}\|_{ \mathbf{L}^{2}(\Omega)}^2}-\omega\frac{\|\mathbf{u}-\mathbf{u}^{h}\|_{\mathbf{L}^{2}(\Omega)}^2}{\|\mathbf{u}^{h}\|_{ \mathbf{L}^{2}(\Omega)}^2}+2\frac{D_{h}(\mathbf{u}, \mathbf{u}^{h})}{\|\mathbf{u}^{h}\|_{ \mathbf{L}^{2}(\Omega)}^2}.\label{s4.6}
\end{align}
Substituting \eqref{s4.1}, \eqref{s4.4} and \eqref{s4.5} into  \eqref{s4.6} we get \eqref{s4.2}.
 The proof is completed.
\end{proof}

Let $(\omega_{j}, \mathbf{u}_{j})$ and $(\omega_{j,h},
\mathbf{u}_{j,h})$ be the $j$th eigenpair of \eqref{s2.2} and \eqref{s3.2}, respectively.
Denote $dist(\mathbf{u},S)=\inf\limits_{\mathbf{v}\in S}\|\mathbf{u}-\mathbf{v}\|_{h}$.

The following lemma is an analog to Theorem 3.2 in \cite{yang2011} and  Lemma 4.1 in \cite{yangbihanyu}, and can be proved similarly.
\begin{lemma}\label{lem4.2}
Let $(\varpi_{0}, \mathbf{u}_{0})$ be an approximation for
$(\varpi_j, \mathbf{u}_{j})$
where $\varpi_{0}$ is not an eigenvalue of $\mathbf{T}_{h}$ and
$\mathbf{u}_{0}\in \mathbf{S}_0^{h}(\Omega)$ with $\|\mathbf{u}_{0}\|_h=1$.
 Suppose that

(C1)~$dist(\mathbf{u}_{0}, \mathbf{M}_{h}(\varpi_{j}))\leq \frac{1}{2}$;

(C2)~$|\varpi_{0}-\varpi_{j}|\leq \frac{\varrho}{4}$,
$|\varpi_{k,h}-\varpi_{k}|\leq \frac{\varrho}{4}$ for
$k=j-1,j,\cdots,j+q~(k\not=0)$;

(C3)~ $\mathbf{u}'\in \mathbf{S}_0^{h}(\Omega), \mathbf{u}^{h}\in \mathbf{S}_0^{h}(\Omega)$ satisfy
\begin{equation*}
(\varpi_{0}-\mathbf{T}_{h})\mathbf{u}'=\mathbf{u}_{0},~~~\mathbf{u}^{h}=\frac{\mathbf{u}'}{\|\mathbf{u}'\|_h}.
\end{equation*}
Then
\begin{equation}\label{s4.7}
dist(\mathbf{u}^{h},\widehat{\mathbf{M}}_{h}(\varpi
_{j}))\leq
\frac{4}{\varrho}\max\limits_{j\leq k\leq j+q-1}|\varpi_{0}-\varpi_{k,h}|dist(\mathbf{u}_{0}, \mathbf{M}_{h}(\varpi_{j})),
\end{equation}
 where
$\varrho=\min\limits_{\varpi_{k}\not=\varpi_{j}}|\varpi_{k}-\varpi_{j}|$ is
the separation constant of the eigenvalue $\varpi_{j}$.
 \end{lemma}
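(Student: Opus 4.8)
The plan is to exploit that $\mathbf{T}_{h}$, restricted to the finite–dimensional space $\mathbf{S}_{0}^{h}(\Omega)$, is self-adjoint and positive with respect to the inner product $a_{h}(\cdot,\cdot)$: since $a_{h}$ and $b$ are symmetric, the definition \eqref{s3.32} gives $a_{h}(\mathbf{T}_{h}\mathbf{f},\mathbf{g})=b(\mathbf{f},\mathbf{g})=a_{h}(\mathbf{f},\mathbf{T}_{h}\mathbf{g})$. Hence there is an $a_{h}$-orthonormal eigenbasis $\{\mathbf{u}_{k,h}\}$ of $\mathbf{S}_{0}^{h}(\Omega)$ with $\mathbf{T}_{h}\mathbf{u}_{k,h}=\varpi_{k,h}\mathbf{u}_{k,h}$. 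Writing $\Lambda=\{j,j+1,\dots,j+q-1\}$ for the cluster, so that $\mathbf{M}_{h}(\varpi_{j})=\mathrm{span}\{\mathbf{u}_{k,h}:k\in\Lambda\}$, I expand $\mathbf{u}_{0}=\sum_{k}\alpha_{k}\mathbf{u}_{k,h}$ with $\alpha_{k}=a_{h}(\mathbf{u}_{0},\mathbf{u}_{k,h})$ and $\sum_{k}|\alpha_{k}|^{2}=\|\mathbf{u}_{0}\|_{h}^{2}=1$; orthonormality then gives $dist(\mathbf{u}_{0},\mathbf{M}_{h}(\varpi_{j}))^{2}=\sum_{k\notin\Lambda}|\alpha_{k}|^{2}$. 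Because $\varpi_{0}$ is not an eigenvalue of $\mathbf{T}_{h}$, the resolvent acts componentwise and $\mathbf{u}'=(\varpi_{0}-\mathbf{T}_{h})^{-1}\mathbf{u}_{0}=\sum_{k}\tfrac{\alpha_{k}}{\varpi_{0}-\varpi_{k,h}}\mathbf{u}_{k,h}$.

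Let $P$ be the $a_{h}$-orthogonal projection onto $\mathbf{M}_{h}(\varpi_{j})$. The heart of the argument is to control the off-cluster component of $\mathbf{u}'$, for which I need the uniform lower bound $|\varpi_{0}-\varpi_{k,h}|\geq\varrho/2$ for every $k\notin\Lambda$. For the two indices adjacent to the cluster this follows from (C2) and the definition of $\varrho$, via $|\varpi_{0}-\varpi_{k,h}|\geq|\varpi_{k}-\varpi_{j}|-|\varpi_{0}-\varpi_{j}|-|\varpi_{k,h}-\varpi_{k}|\geq\varrho-\varrho/4-\varrho/4=\varrho/2$. For the remaining (far) indices I invoke the monotone ordering of $\{\varpi_{k,h}\}$ (decreasing, since $\{\omega_{k,h}\}$ increases): once $\varpi_{j-1,h}$ lies at distance $\geq\varrho/2$ above $\varpi_{0}$ and $\varpi_{j+q,h}$ at distance $\geq\varrho/2$ below it, all farther eigenvalues are even farther, so the bound holds for all $k\notin\Lambda$ without needing (C2) on those far indices. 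Consequently
\begin{equation*}
\|\mathbf{u}'-P\mathbf{u}'\|_{h}^{2}=\sum_{k\notin\Lambda}\frac{|\alpha_{k}|^{2}}{|\varpi_{0}-\varpi_{k,h}|^{2}}\leq\frac{4}{\varrho^{2}}\,dist(\mathbf{u}_{0},\mathbf{M}_{h}(\varpi_{j}))^{2}.
\end{equation*}

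For the denominator I bound $\|\mathbf{u}'\|_{h}$ from below using (C1): the cluster mass is large, $\sum_{k\in\Lambda}|\alpha_{k}|^{2}=1-dist(\mathbf{u}_{0},\mathbf{M}_{h}(\varpi_{j}))^{2}\geq 3/4$, so with $d_{\max}=\max_{j\leq k\leq j+q-1}|\varpi_{0}-\varpi_{k,h}|$ one has
\begin{equation*}
\|\mathbf{u}'\|_{h}^{2}\geq\|P\mathbf{u}'\|_{h}^{2}=\sum_{k\in\Lambda}\frac{|\alpha_{k}|^{2}}{|\varpi_{0}-\varpi_{k,h}|^{2}}\geq\frac{1}{d_{\max}^{2}}\sum_{k\in\Lambda}|\alpha_{k}|^{2}.
\end{equation*}
Finally I normalize onto the unit sphere: taking $\tilde{\mathbf{u}}=P\mathbf{u}'/\|P\mathbf{u}'\|_{h}\in\widehat{\mathbf{M}}_{h}(\varpi_{j})$ and $\mathbf{u}^{h}=\mathbf{u}'/\|\mathbf{u}'\|_{h}$, an orthogonal decomposition gives $\|\mathbf{u}^{h}-\tilde{\mathbf{u}}\|_{h}^{2}=2\bigl(1-\|P\mathbf{u}'\|_{h}/\|\mathbf{u}'\|_{h}\bigr)\leq 2\|\mathbf{u}'-P\mathbf{u}'\|_{h}^{2}/\|\mathbf{u}'\|_{h}^{2}$, so that $dist(\mathbf{u}^{h},\widehat{\mathbf{M}}_{h}(\varpi_{j}))\leq C\,\|\mathbf{u}'-P\mathbf{u}'\|_{h}/\|\mathbf{u}'\|_{h}$; combining the two displays yields \eqref{s4.7}. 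I expect the main obstacle to be the separation estimate of the second paragraph, namely establishing $|\varpi_{0}-\varpi_{k,h}|\geq\varrho/2$ for \emph{all} $k\notin\Lambda$ from hypothesis (C2), which holds only for indices adjacent to the cluster — this is exactly where the monotone ordering of the discrete eigenvalues must be used to propagate the bound to the far indices. The normalization bookkeeping that passes from $P\mathbf{u}'$ to the unit sphere $\widehat{\mathbf{M}}_{h}(\varpi_{j})$, and the tracking of constants so the final factor is $4/\varrho$, is routine but must be carried out attentively.
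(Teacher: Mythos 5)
Your proof is correct, and it is essentially the argument the paper relies on: the paper gives no proof of Lemma 4.2, deferring to Theorem 3.2 of Yang--Bi (2011) and Lemma 4.1 of Yang--Bi--Han--Yu, whose proofs are exactly this spectral-decomposition/resolvent argument (expand $\mathbf{u}_0$ in an $a_h$-orthonormal eigenbasis of $\mathbf{T}_h$, use (C2) plus monotone ordering of the discrete eigenvalues to get the uniform separation $|\varpi_0-\varpi_{k,h}|\geq\varrho/2$ off the cluster, bound $\|\mathbf{u}'\|_h$ from below via (C1), and normalize). Your constant tracking also checks out: the combination gives a factor $4\sqrt{2}/\sqrt{3}<4$, so the stated bound \eqref{s4.7} follows.
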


\begin{theorem}\label{thm4.2}
Suppose that $\mathbf{R}(\Omega)$ holds.
Assume that $(\omega^{h}, \mathbf{u}^{h})$ is an approximate eigenpair obtained
by Scheme 4.2. Then there exists an eigenfunction $\mathbf{u}\in \mathbf{M}(\omega)$ such that
\begin{align}
&\|\mathbf{u}^h-\mathbf{u}_j\|_{h}\leq C(H^{4s}+h^{s}),\label{s4.8}\\
&|\omega^{h}-\omega_j|\leq C(H^{8s}+h^{2s}).\label{s4.9}
\end{align}
\end{theorem}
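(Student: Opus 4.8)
The plan is to recognize Step 2 of Scheme 4.2 as one step of a shifted--inverse iteration for $\mathbf{T}_h$ with shift $\varpi_H=1/\omega_H$, so that Lemma 4.2 applies, and then to chain the resulting bound with the fine--grid eigenfunction error from Theorem 3.3. First I would recast Step 2 in operator form: using the defining relation $b(\mathbf{g},\mathbf{v})=a_h(\mathbf{T}_h\mathbf{g},\mathbf{v})$ of $\mathbf{T}_h$, the equation $a_h(\mathbf{u}',\mathbf{v})-\omega_H b(\mathbf{u}',\mathbf{v})=b(\mathbf{u}_H,\mathbf{v})$ becomes $(\mathbf{I}-\omega_H\mathbf{T}_h)\mathbf{u}'=\mathbf{T}_h\mathbf{u}_H$, and multiplying by $\varpi_H$ gives
\[
(\varpi_H-\mathbf{T}_h)\mathbf{u}'=\varpi_H\mathbf{T}_h\mathbf{u}_H .
\]
Since only the direction of $\mathbf{u}'$ enters $\mathbf{u}^h=\mathbf{u}'/\|\mathbf{u}'\|_h$, I may set $\mathbf{u}_0=\mathbf{T}_h\mathbf{u}_H/\|\mathbf{T}_h\mathbf{u}_H\|_h$ and $\varpi_0=\varpi_H$; assuming, as usual, that $\omega_H$ is not a discrete eigenvalue so that Step 2 is solvable, the iterate in (C3) of Lemma 4.2 is a positive multiple of $\mathbf{u}'$, hence produces exactly the same $\mathbf{u}^h$. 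Conditions (C1)--(C2) hold for $H,h$ small since all eigenvalue and eigenfunction errors tend to $0$, so Lemma 4.2 yields
\[
dist(\mathbf{u}^h,\widehat{\mathbf{M}}_h(\varpi_j))\le \tfrac{4}{\varrho}\max_{j\le k\le j+q-1}|\varpi_H-\varpi_{k,h}|\;dist(\mathbf{u}_0,\mathbf{M}_h(\varpi_j)).
\]

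The core of the argument is to show that both factors on the right are $O(H^{2s})$. For the gap factor I would write $|\varpi_H-\varpi_{k,h}|\le|\varpi_H-\varpi_j|+|\varpi_j-\varpi_{k,h}|$; Theorem 3.3 with (3.37) gives $|\varpi_H-\varpi_j|\le CH^{2s}$ and $|\varpi_j-\varpi_{k,h}|\le Ch^{2s}\le CH^{2s}$ (using $h\le H$ and that the eigenvalues stay bounded away from $0$). For the distance factor I would expand $\mathbf{T}_h\mathbf{u}_H$ in the $b$--orthonormal basis $\{\phi_{k,h}\}$ of discrete eigenfunctions of $\mathbf{T}_h$, so that its component orthogonal to $\mathbf{M}_h(\varpi_j)$ satisfies $\|(\mathbf{I}-E_h)\mathbf{T}_h\mathbf{u}_H\|_h^2\le C\sum_{\varpi_{k,h}\ne\varpi_j}b(\mathbf{u}_H,\phi_{k,h})^2$. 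Splitting $\mathbf{u}_H=(\mathbf{u}_H-\mathbf{u})+\mathbf{u}$ with $\mathbf{u}\in\mathbf{M}(\omega)$ the exact eigenfunction close to $\mathbf{u}_H$, the first part is bounded by $\|\mathbf{u}_H-\mathbf{u}\|_{\mathbf{L}^2(\Omega)}^2\le CH^{4s}$ (Theorem 3.3), while for the second part I would use the spectral gap $|\varpi_{k,h}-\varpi_j|\ge\varrho/2$ to estimate $\sum_{\ne}b(\mathbf{u},\phi_{k,h})^2\le C\|(\mathbf{T}_h-\mathbf{T})\mathbf{u}+\varpi_j(\mathbf{u}-P_h\mathbf{u})\|_{\mathbf{L}^2(\Omega)}^2\le Ch^{4s}$, where $P_h$ is the $\mathbf{L}^2$--orthogonal projection onto $\mathbf{S}_0^h(\Omega)$ and I use (3.37) together with the $O(h^{1+s})$ projection error. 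As $\|\mathbf{T}_h\mathbf{u}_H\|_h$ is bounded below, this gives $dist(\mathbf{u}_0,\mathbf{M}_h(\varpi_j))\le CH^{2s}$, hence $dist(\mathbf{u}^h,\widehat{\mathbf{M}}_h(\varpi_j))\le CH^{4s}$. Choosing $\tilde{\mathbf{u}}_h\in\widehat{\mathbf{M}}_h(\varpi_j)$ realizing this distance and an exact eigenfunction $\mathbf{u}_j\in\mathbf{M}(\omega)$ with $\|\tilde{\mathbf{u}}_h-\mathbf{u}_j\|_h\le Ch^s$ (Theorem 3.3), the triangle inequality yields (4.8).

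For (4.9) I would apply the generalized Rayleigh--quotient identity of Lemma 4.1 with $\mathbf{v}=\mathbf{u}^h$ and $\mathbf{u}=\mathbf{u}_j$, giving
\[
\omega^h-\omega_j=\frac{\|\mathbf{u}_j-\mathbf{u}^h\|_h^2}{\|\mathbf{u}^h\|_{\mathbf{L}^2(\Omega)}^2}-\omega_j\frac{\|\mathbf{u}_j-\mathbf{u}^h\|_{\mathbf{L}^2(\Omega)}^2}{\|\mathbf{u}^h\|_{\mathbf{L}^2(\Omega)}^2}+2\frac{D_h(\mathbf{u}_j,\mathbf{u}^h)}{\|\mathbf{u}^h\|_{\mathbf{L}^2(\Omega)}^2}.
\]
The denominator is bounded below for $H,h$ small. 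The first term is $O(H^{8s}+h^{2s})$ upon squaring (4.8) and absorbing the cross term $H^{4s}h^s$ by the AM--GM inequality; the second is handled the same way through the Poincar\'e--Friedrichs inequality (4.3). For the consistency term I use $D_h(\mathbf{u}_j,\mathbf{u}_j)=0$ and Theorem 3.1 with source $\mathbf{f}=\omega_j\mathbf{u}_j$, so that $|D_h(\mathbf{u}_j,\mathbf{u}^h)|=|D_h(\mathbf{u}_j,\mathbf{u}^h-\mathbf{u}_j)|\le Ch^s\|\mathbf{u}^h-\mathbf{u}_j\|_h\le C(h^sH^{4s}+h^{2s})$, with $h^sH^{4s}\le\tfrac12(h^{2s}+H^{8s})$; collecting the three terms gives (4.9).

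The step I expect to be the main obstacle is the distance estimate $dist(\mathbf{u}_0,\mathbf{M}_h(\varpi_j))\le CH^{2s}$: one must show that a single application of $\mathbf{T}_h$ to the coarse eigenfunction lands within the coarse $\mathbf{L}^2$ accuracy $O(H^{2s})$ of the \emph{fine} eigenspace, which requires the spectral decomposition of $\mathbf{T}_h$ and the separation constant $\varrho$ rather than merely the energy error $O(H^s)$ of $\mathbf{u}_H$. It is exactly this gain, amplified by the shift factor $O(H^{2s})$ through Lemma 4.2, that produces the $H^{4s}$ rate characteristic of the shifted--inverse scheme. Throughout, every constant inherits its $\lambda$--independence from Theorems 3.1--3.3, so the scheme stays locking--free.
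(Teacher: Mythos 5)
Your proof is correct and follows the paper's overall skeleton --- recast Step 2 as one shifted-inverse iteration for $\mathbf{T}_h$ with shift $\varpi_H$, verify (C1)--(C3), apply Lemma 4.2 with the gap factor $\max_k|\varpi_H-\varpi_{k,h}|\le CH^{2s}$, bridge from $\widehat{\mathbf{M}}_h(\varpi_j)$ to $\mathbf{M}(\omega)$ at cost $Ch^s$, and finish with Lemma 4.1 exactly as in Theorem 4.1 --- but you treat the one substantive ingredient, the distance factor, by a genuinely different argument. The paper never proves your bound $dist(\mathbf{u}_0,\mathbf{M}_h(\varpi_j))\le CH^{2s}$; it settles for the weaker estimate $dist(\mathbf{u}_0,\mathbf{M}_h(\varpi_j))\le C(\|(\mathbf{T}-\mathbf{T}_H)|_{\mathbf{M}(\omega_j)}\|_{\mathbf{L}^{2}(\Omega)}+\|(\mathbf{T}-\mathbf{T}_h)|_{\mathbf{M}(\omega_j)}\|_h)\le C(H^{2s}+h^{s})$, obtained by operator-level manipulations only: the smoothing identity $a_h(\mathbf{T}_h\mathbf{g},\mathbf{T}_h\mathbf{g})=b(\mathbf{g},\mathbf{T}_h\mathbf{g})$ converts the coarse $\mathbf{L}^2$ error $\|\mathbf{u}_H-\widetilde{\mathbf{u}}\|_{\mathbf{L}^{2}(\Omega)}\le CH^{2s}$ into $\|\mathbf{T}_h(\mathbf{u}_H-\widetilde{\mathbf{u}})\|_h\le CH^{2s}$, and then (3.36) carries $\widetilde{\mathbf{u}}/\|\widetilde{\mathbf{u}}\|_h$ into $\mathbf{M}_h(\omega_j)$ at cost $C(h^s+h^{2s})$. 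This weaker bound suffices, since Lemma 4.2 then yields $C(H^{4s}+H^{2s}h^{s})$ and $H^{2s}h^{s}\le\frac{1}{2}(H^{4s}+h^{2s})$, so (4.8) follows all the same. Consequently your closing remark is slightly off: what is indispensable is not the spectral decomposition of $\mathbf{T}_h$ and the separation constant, but only the $\mathbf{L}^2$-level gain $O(H^{2s})$ (versus the energy error $O(H^{s})$) combined with the smoothing property of $\mathbf{T}_h$. Your Bessel-plus-gap computation --- using the $b$-self-adjointness of $\mathbf{T}_h$, the expansion $\mathbf{T}_h\mathbf{u}_H=\sum_k\varpi_{k,h}b(\mathbf{u}_H,\phi_{k,h})\phi_{k,h}$, and the identity $(\varpi_j-\varpi_{k,h})b(\mathbf{u},\phi_{k,h})=b((\mathbf{T}-\mathbf{T}_h)\mathbf{u},\phi_{k,h})$ --- is valid and buys the sharper, more transparent intermediate bound $CH^{2s}$, at the price of needing a gap $|\varpi_j-\varpi_{k,h}|\ge\varrho/2$ uniformly over all non-cluster indices $k$ (true for small $h$ by monotonicity of the discrete eigenvalues, but it should be stated). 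Two minor repairs: (3.34) is formulated for an eigenfunction of a single discrete eigenvalue, while your $\tilde{\mathbf{u}}_h\in\widehat{\mathbf{M}}_h(\varpi_j)$ may mix the $q$ eigenvalues $\omega_{l,h}$, so you should, as the paper does, expand $\tilde{\mathbf{u}}_h$ in an $a_h$-orthonormal eigenbasis $\{\mathbf{u}_{l,h}\}$ of $\mathbf{M}_h(\omega_j)$ and approximate each $\mathbf{u}_{l,h}$ by some $\mathbf{u}_l^{0}\in\mathbf{M}(\omega)$; and condition (C1) is a quantitative hypothesis, so it should be justified by your own distance estimate rather than by the qualitative remark that errors tend to zero.
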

\begin{proof}
We use Lemma 4.2 to complete the proof. Select
\begin{equation*}
\varpi_{0}=\frac{1}{\omega_{H}}~~ and~~
\mathbf{u}_{0}=\frac{\omega_{H}\mathbf{T}_{h}\mathbf{u}_{H}}{\|\omega_{H}\mathbf{T}_{h}\mathbf{u}_{H}\|_{h}}.
\end{equation*}
From Theorem 3.3 we know that there exists $\widetilde{\mathbf{u}}\in \mathbf{M}(\omega_j)$ making $\mathbf{u}_{H}-\widetilde{\mathbf{u}}$ satisfy \eqref{s3.34} and \eqref{s3.35}.

From \eqref{s3.32}, Schwarz inequality, and \eqref{s3.35} we deduce
\begin{align*}
&a_{h}(\mathbf{T}_{h}(\mathbf{u}_{H}-\widetilde{\mathbf{u}}),\mathbf{T}_{h}(\mathbf{u}_{H}-\widetilde{\mathbf{u}}) )=b(\mathbf{u}_{H}-\widetilde{\mathbf{u}},\mathbf{T}_{h}(\mathbf{u}_{H}-\widetilde{\mathbf{u}}))\\
&\quad \quad \leq\|\mathbf{u}_{H}-\widetilde{\mathbf{u}}\|_{\mathbf{L}^{2}(\Omega)}\|\mathbf{T}_{h}(\mathbf{u}_{H}-\widetilde{\mathbf{u}})\|_{\mathbf{L}^{2}(\Omega)}\leq
C \|(\mathbf{T}-\mathbf{T}_H)|_{\mathbf{M}(\omega_j)}\|_{\mathbf{L}^{2}(\Omega)}^{2},
\end{align*}
thus,
\begin{equation*}
\|\mathbf{T}_{h}(\mathbf{u}_{H}-\widetilde{\mathbf{u}})\|_{h}\leq
C \|(\mathbf{T}-\mathbf{T}_H)|_{\mathbf{M}(\omega_j)}\|_{\mathbf{L}^{2}(\Omega)},
\end{equation*}
then, combining with \eqref{s3.33} and $\|\mathbf{T}_{h}\widetilde{\mathbf{u}}-\mathbf{T}\widetilde{\mathbf{u}}\|_{h}\leq C\|(\mathbf{T}_{h}-\mathbf{T})|_{\mathbf{M}(\omega_j)}\|_{h}$, we derive
\begin{align*}
&\|\omega_{H}\mathbf{T}_{h}\mathbf{u}_{H}-\widetilde{\mathbf{u}}\|_{h}=\|\omega_{H}\mathbf{T}_{h}\mathbf{u}_{H}-\omega_j\mathbf{T}\widetilde{\mathbf{u}}\|_{h}\\
&\quad\quad  =\|\omega_{H}(\mathbf{T}_{h}\mathbf{u}_{H}-\mathbf{T}_{h}\widetilde{\mathbf{u}})+\omega_{H}(\mathbf{T}_{h}\widetilde{\mathbf{u}}-\mathbf{T}\widetilde{\mathbf{u}})+(\omega_{H}-\omega_j)\mathbf{T}\widetilde{\mathbf{u}}\|_{h}\nonumber\\
&\quad \quad \leq C(\|(\mathbf{T}-\mathbf{T}_H)|_{\mathbf{M}(\omega_j)}\|_{\mathbf{L}^{2}(\Omega)}+\|(\mathbf{T}-\mathbf{T}_h)|_{\mathbf{M}(\omega_j)}\|_{h}).
\end{align*}
It is easy to prove that in any normed space, it is valid for any nonzero $\Phi,\Psi$ that
\begin{equation*}
\|\frac{\Phi}{\|\Phi\|}-\frac{\Psi}{\|\Psi\|}\|\leq 2\frac{\|\Phi-\Psi\|}{\|\Phi\|},~~~
\|\frac{\Phi}{\|\Phi\|}-\frac{\Psi}{\|\Psi\|}\|\leq 2\frac{\|\Phi-\Psi\|}{\|\Psi\|}.
\end{equation*}
Hence,
\begin{align}
\|\mathbf{u}_{0}-\frac{\widetilde{\mathbf{u}}}{\|\widetilde{\mathbf{u}}\|_{h}}\|_{h}&=\|\frac{\omega_{H}\mathbf{T}_{h}\mathbf{u}_{H}}{\|\omega_{H}\mathbf{T}_{h}\mathbf{u}_{H}\|_{h}}-\frac{\widetilde{\mathbf{u}}}{\|\widetilde{\mathbf{u}}\|_{h}}\|_{h}\leq C\|\omega_{H}\mathbf{T}_{h}\mathbf{u}_{H}-\widetilde{\mathbf{u}}\|_{h}\nonumber\\
&\leq C(\|(\mathbf{T}-\mathbf{T}_H)|_{\mathbf{M}(\omega_j)}\|_{\mathbf{L}^{2}(\Omega)}+\|(\mathbf{T}-\mathbf{T}_h)|_{\mathbf{M}(\omega_j)}\|_{h}).\label{s4.10}
\end{align}
For $\frac{\widetilde{\mathbf{u}}}{\|\widetilde{\mathbf{u}}\|_{h}}\in \widehat{\mathbf{M}}(\omega_j)$, from \eqref{s3.36} we know there exists $\mathbf{u}_{h}\in \mathbf{M}_{h}(\omega_j)$ such that
\begin{equation}\label{s4.11}
\|\frac{\widetilde{\mathbf{u}}}{\|\widetilde{\mathbf{u}}\|_{h}}-\mathbf{u}_{h}\|_{h}\leq
C(\|(\mathbf{T}-\mathbf{T}_{h})|_{\mathbf{M}(\omega_j)}\|_{h}+\|(\mathbf{T}-\mathbf{T}_{h})|_{\mathbf{M}(\omega_j)}\|_{\mathbf{L}^{2}(\Omega)}).
\end{equation}
From the triangle inequality, \eqref{s4.10} and \eqref{s4.11}, we have
\begin{align}
dist(\mathbf{u}_{0},\mathbf{M}_{h}(\omega_j))&\leq \|\mathbf{u}_{0}-\mathbf{u}_{h}\|_{h}\leq
\|\mathbf{u}_{0}-\frac{\widetilde{\mathbf{u}}}{\|\widetilde{\mathbf{u}}\|_{h}}\|_{h}+\|\mathbf{u}_{h}-\frac{\widetilde{\mathbf{u}}}{\|\widetilde{\mathbf{u}}\|_{h}}\|_{h}\nonumber\\ &\leq C(\|(\mathbf{T}-\mathbf{T}_H)|_{\mathbf{M}(\omega_j)}\|_{\mathbf{L}^{2}(\Omega)}+\|(\mathbf{T}-\mathbf{T}_h)|_{\mathbf{M}(\omega_j)}\|_{h}),\label{s4.12} \end{align}
then Condition (C1) in Lemma 4.2 holds when $H$ and $h$ are small enough.

From \eqref{s3.33} we know that Condition (C2) in Lemma 4.2 holds.

From Step 2 in Scheme 4.2, we know that $\mathbf{u}^{h}$ satisfies
\begin{equation*}
(\frac{1}{\omega_{H}}-\mathbf{T}_{h})\mathbf{u}'=\mathbf{u}_{0},~~~\mathbf{u}^{h}=\frac{\mathbf{u}'}{\|\mathbf{u}'\|_{h}},
\end{equation*}
that is, Condition (C3) in Lemma 4.2 holds.

Let the eigenfunctions $\{\mathbf{u}_{l,h}\}_{l=j}^{j+q-1}$ be a normalized orthonormal
basis of $\mathbf{M}_{h}(\omega_j)$ in the sense of norm $\|\cdot\|_{h}$,
then by Theorem 3.3 we know that there exist
$\{\mathbf{u}_{l}^{0}\}_{l=j}^{j+q-1}\subset \mathbf{M}(\omega_j)$ making
\eqref{s3.34} hold. Let
\begin{equation*}
\mathbf{u}^{*}=\sum\limits_{l=j}^{j+q-1}a_{h}(\mathbf{u}^{h}, \mathbf{u}_{l,h})\mathbf{u}_{l,h},
\end{equation*}
then, by \eqref{s4.7} and \eqref{s4.12} we deduce
\begin{align}
&\|\mathbf{u}^{h}-\mathbf{u}^{*}\|_{h}=dist(\mathbf{u}^{h},\mathbf{M}_{h}(\omega_j))\leq dist(\mathbf{u}^{h},\widehat{\mathbf{M}}_{h}(\omega_j))\nonumber\\
&\quad \quad\leq C\max\limits_{j\leq k\leq j+q-1}|\varpi_{0}-\varpi_{k,h}|(\|(\mathbf{T}-\mathbf{T}_{H})|_{\mathbf{M}(\omega_j)}\|_{\mathbf{L}^{2}(\Omega)}^{2}+\|(\mathbf{T}-\mathbf{T}_{h})|_{\mathbf{M}(\omega_j)}\|_{h}).\label{s4.13}
\end{align}
From \eqref{s3.33} we get
\begin{equation*}
|\varpi_{0}-\varpi_{k,h}|=|\frac{\omega_{k,h}-\omega_{j}+\omega_{j}-\omega_{H}}{\omega_{H}\omega_{k,h}}|
\leq C\|(\mathbf{T}-\mathbf{T}_{H})|_{\mathbf{M}(\omega_j)}\|_{\mathbf{L}^{2}(\Omega)},
\end{equation*}
which together with \eqref{s4.13} yields
\begin{equation}
\|\mathbf{u}^{h}-\mathbf{u}^{*}\|_{h}\leq C(\|(\mathbf{T}-\mathbf{T}_{H})|_{\mathbf{M}(\omega_j)}\|_{\mathbf{L}^{2}(\Omega)}^{2} +\|(\mathbf{T}-\mathbf{T}_{H})|_{\mathbf{M}(\omega_j)}\|_{\mathbf{L}^{2}(\Omega)}\|(\mathbf{T}-\mathbf{T}_{h})|_{\mathbf{M}(\omega_j)}\|_{h}).\label{s4.14}
\end{equation}
Let $\mathbf{u}=\sum\limits_{l=j}^{j+q-1}a_{h}(\mathbf{u}^{h},\mathbf{u}_{l,h})\mathbf{u}_{l}^{0}$,
then, from \eqref{s3.34} we get
\begin{align}
\|\mathbf{u}^{*}-\mathbf{u}\|_{h}
&=\|\sum\limits_{l=j}^{j+q-1}a_{h}(\mathbf{u}^{h},\mathbf{u}_{l,h})(\mathbf{u}_{l,h}-\mathbf{u}_{l}^{0})\|_{h}\nonumber\\
&\leq C(\|(\mathbf{T}-\mathbf{T}_{h})|_{\mathbf{M}(\omega_j)}\|_{h}+\|(\mathbf{T}-\mathbf{T}_{h})|_{\mathbf{M}(\omega_j)}\|_{\mathbf{L}^{2}(\Omega)}).\label{s4.15}
\end{align}
From the triangle inequality, \eqref{s4.14}, \eqref{s4.15} and \eqref{s3.37} we obtain \eqref{s4.8}.

Similar to the proof of \eqref{s4.2}, from \eqref{s4.8}, \eqref{s4.3} and Lemma 4.1 we get \eqref{s4.9}.
\end{proof}

\section{Numerical experiments}\label{comparison}

In this section, we will report some numerical experiments
to verify our theoretical analysis and the efficiency of two-grid schemes. We use MATLAB 2012a to compute on a DELL inspiron5480 PC with 8G memory. Our program is implemented using the package iFEM \cite{chen}. The symbol $'-'$ in our tables means that the calculation cannot proceed since the computer runs out of memory.

{\bf Example 5.1.} Consider the elastic eigenvalue problem \eqref{s2.2} in the unit square $\Omega_S=[0,1]\times[0,1]$ and the L-shaped domain
$\Omega_L=[0,1]\times[0,1]\setminus[\frac{1}{2},1]\times[\frac{1}{2},1]$
with the density $\rho\equiv 1$. We compute the first numerical eigenvalue of \eqref{s2.2} in $\Omega_S$ and $\Omega_L$ by the nonconforming C-R element on uniformly refined meshes, and the results are denoted by $\omega_{h}^S$ and $\omega_{h}^L$, respectively. The numerical results are listed in Tables 5.1-5.2. Since the exact eigenvalues are unknown, we use the following formula
\begin{equation*}
ratio(\omega_{h})\approx lg|\frac{\omega_{h}-\omega_{h/2}}{\omega_{h/2}-\omega_{h/4}}|/lg2
\end{equation*}
to compute the approximate convergence order.

\begin{table}
\caption{The first numerical eigenvalue in $\Omega_S$ and $\Omega_L$ by direct computation with $\mu=\lambda=1$.}
\begin{center}
\begin{tabular}{ccccc}\hline
$h$&$\omega_{h}^S$& $ratio(\omega_{h}^S)$  &  $\omega_{h}^L$& $ratio(\omega_{h}^L)$ \\
\hline
$\frac{\sqrt{2}}{16}$ &  36.968038  & 1.9334 & 53.318789 &   1.3219 \\
$\frac{\sqrt{2}}{32}$ &  37.188573  & 1.9666 & 53.940006 &  1.2829\\
$\frac{\sqrt{2}}{64}$ &  37.246310  & 1.9833 & 54.188497 &  1.2410 \\
$\frac{\sqrt{2}}{128}$&  37.261082  & 1.9908 & 54.291332 &  1.2225\\
$\frac{\sqrt{2}}{256}$&  37.264818  &        & 54.334839 &         \\
$\frac{\sqrt{2}}{512}$&  37.265758  &        & 54.353484 &    \\\hline
\end{tabular}
\end{center}
\end{table}
\begin{table}
\caption{The first numerical eigenvalue in $\Omega_S$ and $\Omega_L$ by direct computation with $\mu=1, \lambda=50$.}
\begin{center}
\begin{tabular}{ccccc}\hline
$h$&$\omega_{h}^S$& $ratio(\omega_{h}^S)$  &  $\omega_{h}^L$& $ratio(\omega_{h}^L)$ \\
\hline
$\frac{\sqrt{2}}{16}$ & 51.823020 & 1.9472 &  118.462292  & 1.3352\\
$\frac{\sqrt{2}}{32}$ & 52.164464 & 1.9845 &  123.764088  & 1.2887 \\
$\frac{\sqrt{2}}{64}$ & 52.253005 & 1.9958 &  125.865404  & 1.2300 \\
$\frac{\sqrt{2}}{128}$& 52.275380 & 1.9995 &  126.725522  & 1.1814 \\
$\frac{\sqrt{2}}{256}$& 52.280990 &        &  127.092195  &  \\
$\frac{\sqrt{2}}{512}$& 52.282393 &        &  127.253875  &   \\\hline
\end{tabular}
\end{center}
\end{table}

 From Tables 5.1-5.2 we can see that the numerical eigenvalues are convergent at different values of $\lambda$, and the convergence order of the first eigenvalue $\omega_h$ is approximately equal to $2.00$, i.e., $2s\approx 2.00$ or $s\approx 1.00$ in the square.
Unfortunately, because of the computer memory limitation  we cannot continue to compute to make the convergence order stable in the L-shaped domain. According to the current results, the convergence order is approximately equal to $1.20$, i.e., $2s\approx 1.20$ or $s\approx 0.60$.

{\bf Example 5.2.} Consider the elastic eigenvalue problem \eqref{s2.2} in the L-shaped domain $\Omega_L=
[0,1]\times[0,1]\setminus[\frac{1}{2},1]\times[\frac{1}{2},1]$ with density $\rho \equiv 1$.
We compute the first approximate eigenvalue of this problem by Schemes 4.1 and 4.2, and denote the numerical eigenvalues obtained by Schemes 4.1 and 4.2 by $\omega_{(1)}^{h}$ and $\omega_{(2)}^{h}$, respectively. The numerical results are listed in Tables 5.3-5.4.
For comparison, we also solve this problem on fine grid directly by using Matlab command $eigs(A,M,1,'sm')$, and the results are denoted by $\omega_h$.

\begin{table}
\caption{The first numerical eigenvalue in the L-shaped domain obtained by Schemes 4.1 and 4.2 with $\mu=\lambda=1$.}
\begin{center}
\begin{tabular}{cccccccc}\hline
$H$&$h$&$\omega_{(1)}^{h}$&$times(s)$&$\omega_{(2)}^{h}$&$times(s)$&$\omega_h$&$times(s)$\\\hline
$\frac{\sqrt{2}}{8}$&$\frac{\sqrt{2}}{64}$   &54.375337 &0.31   &54.189403 &0.18    &54.188497 & 0.38   \\
$\frac{\sqrt{2}}{16}$&$\frac{\sqrt{2}}{256}$ &54.355380 &3.04   &54.334860 &3.48    &54.334839 & 9.03  \\
$\frac{\sqrt{2}}{32}$&$\frac{\sqrt{2}}{512}$ &54.357151 &14.01  &54.353485 &18.02   &54.353484 & 43.73 \\
$\frac{\sqrt{2}}{32}$&$\frac{\sqrt{2}}{1024}$&54.364492 &67.43  &54.361530 &106.46&   --     & -- \\\hline
\end{tabular}
\end{center}
\footnotesize time(s): the CPU time(s) from the program starting to the current calculating result appearing.
\end{table}
\begin{table}
\caption{The first numerical eigenvalue in the L-shaped domain with $\mu=1, \lambda=50$.}
\begin{center}
\begin{tabular}{cccccccc}\hline
$H$&$h$&$\omega_{(1)}^{h}$&$times(s)$&$\omega_{(2)}^{h}$&$times(s)$&$\omega_h$&$times(s)$\\\hline
$\frac{\sqrt{2}}{8}$&$\frac{\sqrt{2}}{64}$  & 127.306606  & 0.25  & 125.953352 & 0.17   & 125.865404 & 0.39    \\
$\frac{\sqrt{2}}{16}$&$\frac{\sqrt{2}}{256}$& 127.299866  & 3.10  & 127.095813 & 3.43   & 127.092195 & 9.04 \\
$\frac{\sqrt{2}}{32}$&$\frac{\sqrt{2}}{512}$& 127.295589  & 13.73 & 127.254021 & 17.85  & 127.253875 & 43.70 \\
$\frac{\sqrt{2}}{32}$&$\frac{\sqrt{2}}{1024}$&127.366437  & 66.25 & 127.327009 & 111.55 &--        & -- \\\hline
\end{tabular}
\end{center}
\end{table}
The results in Tables 5.3-5.4 show that we can use less time by two-grid discretization schemes to get the same accurate approximations as those obtained by direct computation.

 {\bf Remark.} In Tables 5.3-5.4, for the sake of list, we make the diameters of coarse grid and fine grid satisfying $H=O(\sqrt{h})$. For Scheme 4.2 we can choose $H=O(\sqrt[4]{h})$ according to Theorem 4.2. In the case of $\mu=1, \lambda=1$, when we select $H=\frac{\sqrt{2}}{8}, h=\frac{\sqrt{2}}{1024}$, it takes 119.78s to get the calculating result $\omega^{h^{(2)}}= 54.362491$.

 To observe the influence of the Lam$\acute{e}$ parameter $\lambda$, we also depict the error curves of approximations for the first eigenvalue of  \eqref{s2.2}. Since the exact eigenvalue is not known, we plot the ``error"  $|\omega_{h}-\omega_{h/2}|$, $|\omega_{(1)}^{h}-\omega_{(1)}^{h/2}|$ and $|\omega_{(2)}^{h}-\omega_{(2)}^{h/2}|$  where $h=\frac{\sqrt{2}}{256}$ by taking
$\lambda=1,10^3, 10^5, 10^8$ while $\mu$ is fixed at 1. From Fig. 5.1 we can see that in the L-shaped domain, the ``error" curves become stable as $\lambda$ increases, which indicates that the nonconforming C-R element method and the two-grid schemes of C-R element are locking-free. In the unit square, the error curves of two-grid schemes keep stable while that of direct computation jumps at $\lambda=10^8$, which leaves us a question. Frustratingly, we cannot do more sophisticated calculations at present.

\begin{figure}[h!]
  \centering
    \includegraphics[width=2.9in]{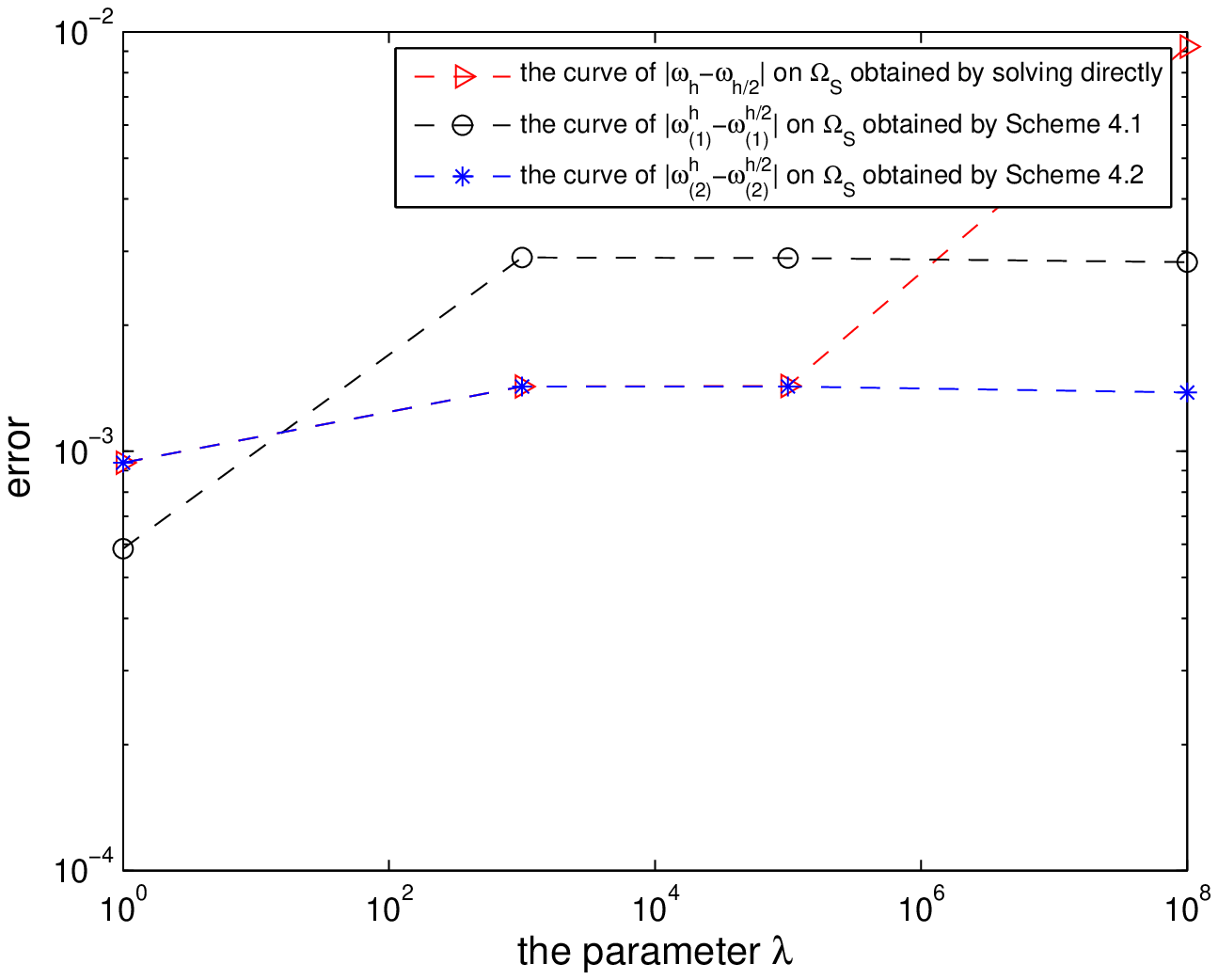}
    \includegraphics[width=2.9in]{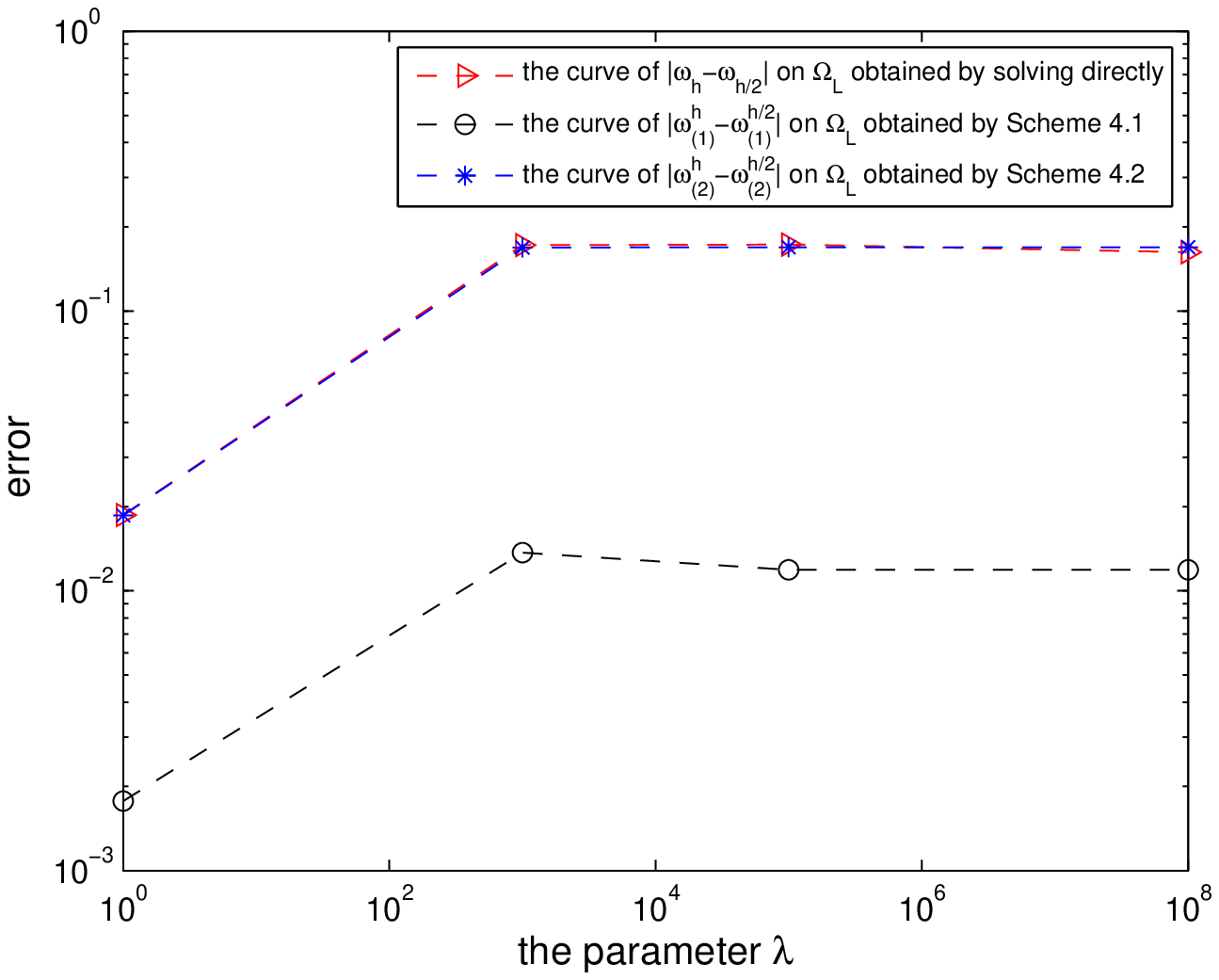}
  \caption{the error curves in the unit square (left) and the error curves in the L-shaped domain (right)}
\end{figure}

\bigskip
\noindent{\bf Acknowledgments.}
This work is supported by National Natural Science Foundation
of China (Grant no. 11761022).

\end{document}